\newtheorem{theorem}{Theorem}[section]
\newtheorem{proposition}[theorem]{Proposition}
\newtheorem{corollary}[theorem]{Corollary}
\newtheorem{example}[theorem]{Example}
\newtheorem{lemma}[theorem]{Lemma}
\newtheorem{remark}[theorem]{Remark}
\def\Rad{\mathrm{Rad}}
 \DeclareMathOperator{\Hom}{Hom}
\def\Cohom{\mathrm{Cohom}}
\def\Ext{\mathrm{Ext}}
\def\Hom{\mathrm{Hom}}
\def\Irr{\mathrm{Irr}}
\def\inj{\mathrm{inj}}
\def\rad{\mathrm{rad}}
\def\dim{\mathrm{dim}}
\def\End{\mathrm{End}}
\def\Soc{\mathrm{soc\hspace{0.1cm}}}
\def\Ker{\mathrm{Ker\hspace{0.1cm}}}
\def\Ima{\mathrm{Im\hspace{0.1cm}}}
\def\T{\mathcal{T}}
\def\M{\mathcal{M}}
\def\point#1{*+[o][F-]{\text{\scriptsize $#1$}}}
\def\isolated#1{\xymatrix{\point{#1}}}
\title{The valued Gabriel quiver of a wedge product and semiprime coalgebras}
\author{Gabriel Navarro}
\address{Department of Computer Sciences and A.I\\
University of Granada \\
c. El Greco s/n\\ E-51002\\ Ceuta\\ Spain}
\email{gnavarro@ugr.es}
\thanks{Research supported by Spanish MEC project MTM2007-66666, and TIC-111
 (Junta de Andaluc{\'\i}a Research Group).}
\begin{document}

\maketitle

\begin{abstract}
We make a first approach to the representation theory of the wedge product of
coalgebras by means of the description of its valued Gabriel quiver. Then we define semiprime
coalgebras and study its category of comodules by the use of localization techniques.
In particular,
we prove that, whether its Gabriel quiver is locally finite, any monomial semiprime fc-tame coalgebra is string.
 We also prove a weaker version of Eisenbud-Griffith theorem for coalgebras, namely, any hereditary semiprime strictly
quasi-finite coalgebra is serial.
\end{abstract}

\section{Introduction}

Throughout this paper we fix a field $k$ and denote by $C$ a $k$-coalgebra.
Unless otherwise stated, we shall assume that $C$ is basic \cite{chin}\cite{simson1},
 that is, $C$ and its
(left or right) socle have decompositions

\begin{equation}\label{decomp1}
C=\bigoplus_{i\in I_C} \hspace{0.1cm} E_i \hspace{0.5cm} \text{ and
} \hspace{0.5cm} \Soc C=\bigoplus_{i\in I_C} \hspace{0.1cm} S_i
\hspace{0.1cm},
\end{equation}
 where $\{E_i\}_{i\in
I_C}$ is a complete set of pairwise non-isomorphic
\emph{indecomposable injective} right $C$-comodules and
$\{S_i\}_{i\in I_C}$ is a complete set of pairwise non-isomorphic
\emph{simple} right (and left) $C$-comodules.

Given two subcoalgebras $A$ and $B$ of $C$, the \emph{wedge product} \cite{sweedler}
of $A$ and $B$ in $C$,
$A\wedge^C B=\Ker(\xymatrix{C\ar[r]^-{\Delta}& C\otimes
C \ar[r]^-{pr\otimes pr}& C/A\otimes C/B})$,
 where $\Delta$ is the comultiplication
of $C$ and $pr$ is the standard projection. Equivalently, $A\wedge^C B$ is the set of
elements $x\in C$ such that $\Delta(x)\in A\otimes C+C\otimes B$, or equivalently,
$A\wedge^C B=(A^{\perp}B^{\perp})^{\perp}$, where $\perp$ denotes the standard orthogonality. If there is no ambiguity, we simply denote
the wedge product by $A\wedge B$.

In the ``coalgebraic'' setting, prime coalgebras,
as those which cannot been decomposed non-trivially
as a wedge product, has been investigated by several authors.
The notion appeared first in Takeuchi's PhD Thesis
 \cite{takeuchithesis} under cocommutative conditions.
Nevertheless, as it was pointed out in \cite{jmnr},
it could be defined in the same way without this restriction.
 This is done in \cite{nek}, where the authors analyze the
Zariski topology attached to the set of prime subcoalgebras
 over a field. In \cite{jmnr}, pointed prime coalgebras over
 a field are studied from the graphical point of view of path
 coalgebras of a quiver. In a more general setting, it has been developed in the
 context of coalgebras over a commutative ring \cite{WijayantiWisbauer}
under the name of wedge coprime coalgebras;  corings \cite{abu} \cite{abu2},
calling them fully coprime corings; or modules and comodules \cite{abu}
\cite{ferrero}; suffering progressive generalizations which,
whether the base ring is a field, coincide with the notion
given by Takeuchi. Nevertheless, unless in a very tangential way,
 none of the above papers deals with the category
 of comodules of a prime coalgebra or coring.

In this paper we make a first approach
 to the representation theory of a wedge product
 of arbitrary coalgebras over a field
 and describe its right valued
 Gabriel quiver \cite{justus} (Theorem \ref{strongly}).
 From this point of view, we deal with semiprime
 coalgebras
  proving that its valued Gabriel quiver should have
an specific shape (Theorem \ref{main}), as a generalization of a result
obtained in \cite{jmnr}. As a consequence, we give a weaker version of the coalgebraic analog of
 a theorem of Eisenbud and Griffith \cite{eisenbud} proven in \cite{GTN} (Corollary \ref{eisenbud}). In the last section, we shall apply these results in order to get certain properties
of the category of comodules of a semiprime coalgebra. In particular, we highlight that, over an
algebraically closed field, a monomial semiprime fc-tame coalgebra whose Gabriel quiver is locally finite is string in the sense of \cite{simson2} (Theorem \ref{biserial}). We would like to remark that, from this perspective, semiprime coalgebras seems to be a more
 appropriated class than prime
 coalgebras since this class is closed under direct sums.

We also would like to remark that we follow
 the nomenclature of \cite{jmnr} and call the coalgebras
``prime'' or ``semiprime'', whilst the papers above-mentioned
 make use of the word ``coprime'' or ``cosemiprime''.
 We think that add the prefix ``co'' in  ``coalgebra''
is enough for pointing out the dual nature of this notion
 and makes more readable the manuscript.
The reader also should note that prime coalgebras over a field
as described in \cite{ferrero} differ from the ones treated here, since
all of them are simple.

All along the paper
 we shall make use of the localization techniques develop
 in \cite{jmn2}, \cite{jmnr2}, \cite{navarro},
\cite{simson06} or \cite{simson07} which have been
 showed to be an efficient tool for developing the
 Representation Theory of Coalgebras. Actually, we
 partially solve a subtle mistake in the proof of
 \cite[Theorem 4.2]{jmnr}. Therefore, for the convenience of the reader, let us remind
 the localization theory in category of comodules.
 Throughout we denote by $\M^C_f$, $\M^C_{qf}$ and $\M^C$ the
category of finite dimensional, quasi-finite and all right
$C$-comodules, respectively.

Let $\T$ be a \emph{dense} subcategory (or a \emph{Serre class}) of the category $\M^C$, $\T$ is said to be \emph{localizing} (cf. \cite{gabriel}) if the quotient
functor $T:\M^C\rightarrow \M^C/\T$ has a right adjoint functor $S$,
called the \emph{section functor}.  If the section functor is exact,
$\T$ is called \emph{perfect localizing}.

From the general theory of localization in Grothendieck categories
\cite{gabriel}, it is well-known that there exists a one-to-one
correspondence between localizing subcategories of $\M^C$ and sets
of indecomposable injective right $C$-comodules, and, as a
consequence, sets of simple right $C$-comodules. More precisely, a
localizing subcategory is determined by an injective right
$C$-comodule $E=\oplus_{j\in J} E_j$, where $J\subseteq I_C$
(therefore the associated set of indecomposable injective comodules
is $\{E_j\}_{j\in J}$). Then $\M^C/\T\simeq \M^D$, where $D$ is the
coalgebra of coendomorphism $\Cohom_C(E,E)$ (cf. \cite{takeuchi} for
definitions), and the quotient and section functors are
$\Cohom_C(E,-)$ and $-\square_D E$, respectively.

In \cite{cuadra}, \cite{jmnr} and \cite{woodcock}, localizing
subcategories are described by means of idempotents in the dual
algebra $C^*$. In particular, it is proved that the quotient
category $\M^C/\T$ is the category of right comodules over the
coalgebra $eCe$, where $e\in C^*$ is an idempotent associated to the
localizing subcategory $\T$ (that is, $E=Ce$, where $E$ is the
injective right $C$-comodule associated to the localizing
subcategory $\T$). The coalgebra structure of $eCe$ (cf.
\cite{radford}) is given by
$$\Delta_{eCe} (exe)=\displaystyle \sum_{(x)} ex_{(1)}e\otimes
ex_{(2)}e \hspace{0.4cm}\text{ and }\hspace{0.4cm}
\epsilon_{eCe}(exe)=\epsilon_C(x)$$ for any $x\in C$, where
$\Delta_C(x)= \sum_{(x)} x_{(1)} \otimes x_{(2)}$ using the
sigma-notation of \cite{sweedler}. For
completeness, we recall from \cite{cuadra} (see also \cite{jmnr})
the following description of the localizing functors. We recall
that, given an idempotent $e\in C^*$, for each right $C$-comodule
$M$, the vector space $eM$ is endowed with a structure of right
$eCe$-comodule given by
$$\rho_{eM}(ex)=\sum_{(x)} ex_{(1)}\otimes
ex_{(0)} e$$ where $\rho_{M}(x)=\sum_{(x)} x_{(1)}\otimes x_{(0)}$
using the sigma-notation of \cite{sweedler}.

 The localization in categories of comodules over path
coalgebras is described in detail in \cite{jmn2} and \cite{jmnr}.
Briefly, following the notation of these papers, let $Q=(Q_0,Q_1)$ be a (possibly infinite) quiver. The path algebra $kQ$ can be endowed with
a coalgebra structure with comultiplication given by
$$\Delta(p)=e_j\otimes p+p\otimes e_i+\sum_{i=1}^{m-1}\alpha_m\cdots
 \alpha_{i+1}\otimes
\alpha_i\cdots \alpha_1=  \sum_{\eta \tau=p} \eta\otimes\tau$$
for any path $p=\alpha_m\cdots \alpha_1$ in $Q$ from $e_i$ to $e_j$, and
for a trivial path, $e_i$, $\Delta(e_i)=e_i\otimes e_i$.

Given $X\subseteq Q_0$, a path $p=\alpha_n\cdots\alpha_1$ in $Q$ is said to
be a \emph{cell} in $Q$ relative to $X$ (shortly a cell) if
$s(\alpha_1),t(\alpha_m)\in X$ and $s(\alpha_i)\notin X$ for all $i=2,\ldots
,n-1$, where $s(\alpha)$ and $t(\alpha)$ denote the source and the sink of an arrow or
a path $\alpha$.

The localizing subcategories of $\M^{kQ}$ are in one-to-one correspondence with subsets
of vertices of $Q$ and also, in one-to-one correspondence with idempotents of the dual
algebra $(kQ)^*$. Then, given $X_e\subseteq Q_0$ corresponding to an idempotent $e\in (kQ)^*$, $e(kQ)e\cong kQ^e$, where $(Q^e)_0=X_e$ and the arrows in $Q^e$ from a vertex $x$
to a vertex $y$ is the number of different cells relative to $X_e$ between these vertices, see \cite{jmn2}.

\section{The valued Gabriel quiver of a wedge product}

The valued Gabriel quiver of a coalgebra carries the information of the second piece of its coradical filtration. Therefore, it used to be one of the first invariant under consideration when dealing with its representation theory. Following \cite{justus}, let us
recall that the right \emph{valued Gabriel quiver} $(Q_C,d_C)$
of a basic coalgebra $C$ as follows: the set of vertices of $(Q_C,d_C)$
is the set of simple right $C$-comodules $\{S_i\}_{i\in I_C}$, and
there exists a unique valued arrow
$$\xymatrix{ S_i \ar[rr]^-{(d'_{ij},d''_{ij})} & & S_j}$$ if and
only if $\Ext_C^1(S_i,S_j)\neq 0$ and,
$$\text{$d'_{ij}=\dim_{G_i} \Ext_C^1(S_i,S_j)$   and
$d''_{ij}=\dim_{G_j} \Ext_C^1(S_i,S_j)$},$$ as a right $G_i$-module
and as a left $G_j$-module, respectively.
When the values of the arrows are irrelevant for our purposes, we shall denote the valued Gabriel quiver
of $C$ simply by $Q_C$.

In \cite{simson06}, the valued Gabriel quiver of $C$ is described
through the notion of irreducible morphisms between indecomposable
injective right $C$-comodules. Let us denote by $\inj^C$ (respect.
${^C}\inj$) the full subcategory of $\M^C$ (respect. ${^C}\M$)
formed by socle-finite (i.e., comodules whose socle is
finite-dimensional) injective right (respect. left) $C$-comodules.
Let $E$ and $E'$ be two comodules in $\inj^C$. A morphism
$f:E\rightarrow E'$ is said to be irreducible if $f$ is not an
isomorphism and given a factorization
$$\xymatrix{ E \ar[rr]^-{f} \ar[rd]_-{g} &    &   E'\\
&  Z   \ar[ru]_{h} &  }$$ of $f$, where $Z$ is in $\inj^C$, $g$ is
a section, or $h$ is a retraction. Analogously to the case of
finite-dimensional algebras, there it is proven that the set of
irreducible morphism $\Irr_C(E_i,E_j)$ between two indecomposable
injective right $C$-comodules $E_i$ and $E_j$ is isomorphic, as
$G_j$-$G_i$-bimodule, to the quotient
$\rad_C(E_i,E_j)/\rad_C^2(E_i,E_j)$. We recall that, for each two
indecomposable injective right $C$-comodules $E_i$ and $E_j$, the
\emph{radical} of $\Hom_C(E_i,E_j)$ is the $K$-subspace
$\rad_C(E_i,E_j)$ of $\Hom_C(E_i,E_j)$ generated by all
non-isomorphisms. Observe that if $i\neq j$, then
$\rad_C(E_i,E_j)=\Hom_C(E_i,E_j)$. The square of $\rad_C(E_i,E_j)$
is defined to be the $K$-subspace
$$\rad_C^2(E_i,E_j) \subseteq \rad_C(E_i,E_j) \subseteq \Hom_C(E_i,E_j)$$
generated by all composite homomorphisms of the form
$$\xymatrix{E_i \ar[r]^-{f} & E_k \ar[r]^-{g} & E_j,}$$ where
$f\in \rad_C(E_i,E_k)$ and $g\in \rad_C(E_k,E_j)$. The $m$th power
$\rad_C^m(E_i,E_j)$ of $\rad_C(E_i,E_j)$ is defined analogously, for
each $m>2$.

Let us now prove a generalization of \cite[Theorem 1.7]{montgomery2} which describes the right valued Gabriel quiver by means of the wedge product of simple right comodules.

\begin{proposition}\label{Gabrielquiver}
Let $C$ be a coalgebra and, $S_i$ and $S_j$ be two simple right $C$-comodules. There exists an arrow in $(Q_C,d_C)$ from $S_j$ to $S_i$ if and only if $(S_i\wedge^C S_j)/(S_i+S_j)\neq 0$. In such a case, the arrow is labeled by $d_{ji}=(d^1_{ji},d^2_{ji})$, where $d^1_{ji}=\dim_{G_j}(S_i\wedge^C S_j)/(S_i+S_j)$ as right $G_j$-comodule, and $d^2_{ji}=\dim_{G_i}(S_i\wedge^C S_j)/(S_i+S_j)$
 as left $G_i$-comodule.
\end{proposition}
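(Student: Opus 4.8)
The plan is to reduce the statement to the definition of $(Q_C,d_C)$ recalled above by establishing a natural isomorphism of $G_j$-$G_i$-bimodules
\[
(S_i\wedge^C S_j)/(S_i+S_j)\;\cong\;\Ext^1_C(S_j,S_i).
\]
Once this is available the proposition follows immediately: by the definition of the valued Gabriel quiver there is an arrow from $S_j$ to $S_i$ exactly when $\Ext^1_C(S_j,S_i)\neq 0$, with valuation $(\dim_{G_j}\Ext^1_C(S_j,S_i),\dim_{G_i}\Ext^1_C(S_j,S_i))$, and the displayed isomorphism transports this to the quotient $(S_i\wedge^C S_j)/(S_i+S_j)$, giving both the equivalence and the values $d^1_{ji},d^2_{ji}$.

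First I would record the internal structure of the subcoalgebra $W:=S_i\wedge^C S_j$. Combining the defining condition $\Delta(W)\subseteq S_i\otimes C+C\otimes S_j$ with $\Delta(W)\subseteq W\otimes W$ yields $\Delta_W(W)\subseteq S_i\otimes W+W\otimes S_j$, and from this I would check that $\Corad(W)=S_i+S_j$ (with $S_i+S_j=S_i\oplus S_j$ when $i\neq j$, and $S_i+S_j=S_i$ when $i=j$). Hence $V:=S_i+S_j$ is the zeroth coradical term of $W$ and $W/V$ is its second Loewy layer. Composing $\Delta_W$ with the projection $W\to W/V$ in each tensor slot then equips $W/V$ with a left $S_i$-comodule structure and a right $S_j$-comodule structure; these are exactly the comodule structures over the simple coalgebras $S_i$ and $S_j$ that induce the left $G_i$-module and right $G_j$-module structures appearing in the statement.

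Next I would set up the comparison with $\Ext^1$. In one direction, every class in $\Ext^1_C(S_j,S_i)$ is represented by a non-split extension $0\to S_i\to M\to S_j\to 0$; the standard fact that a comodule filtered by a subcomodule with coefficient coalgebra $S_i$ and quotient with coefficient coalgebra $S_j$ satisfies $\cf(M)\subseteq \cf(S_i)\wedge^C\cf(S_j)=S_i\wedge^C S_j$ then places $M$ inside $W$ and determines a class in $W/V$. In the other direction, reading off $\Delta_W$ on a representative $x\in W$ produces, after fixing a vector-space complement $W=S_i\oplus S_j\oplus(W/V)$ to resolve the overlap $S_i\otimes S_j$ of the two summands, a $C$-comodule extension of $S_j$ by $S_i$ whose class is split precisely when $x\in V$. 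Tracking these two assignments and checking compatibility with the $S_i$- and $S_j$-coactions yields mutually inverse bimodule maps, hence the displayed isomorphism; non-split extensions then correspond to the nonzero classes of $W/V$, which is the asserted equivalence.

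The step I expect to be the main obstacle is the bookkeeping in the comparison map. Because the two summands $S_i\otimes W$ and $W\otimes S_j$ overlap in $S_i\otimes S_j$, extracting a well-defined extension class from $\Delta_W(x)$ requires a canonical treatment of this overlap and a verification that the resulting class is independent of the chosen complement and additive in $x$. Equally delicate is confirming that the left $S_i$- and right $S_j$-comodule structures on $W/V$ correspond, under the isomorphism, to the $\End_C(S_i)$- and $\End_C(S_j)$-actions that $\Ext^1_C(S_j,S_i)$ carries; this matching is what upgrades the equivalence of nonvanishing to the equality of the valuations $d^1_{ji}$ and $d^2_{ji}$, and is the point where the \emph{valued} quiver, rather than merely its underlying quiver, is determined.
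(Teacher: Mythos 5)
Your overall strategy---identifying $(S_i\wedge^C S_j)/(S_i+S_j)$ with $\Ext^1_C(S_j,S_i)$ as a bimodule and then quoting the definition of $(Q_C,d_C)$---is a genuinely different route from the paper's, and the target isomorphism is indeed the honest content of the proposition (it is exactly the generalization of \cite[Theorem 1.7]{montgomery2} being claimed). Your structural preliminaries are correct and provable: writing $W:=S_i\wedge^C S_j$ and $V:=S_i+S_j$, the inclusions $S_i,S_j\subseteq W$ together with $\Delta(W)\subseteq W\otimes W$ do force $\Delta(W)\subseteq S_i\otimes W+W\otimes S_j$ by a direct complement argument, whence $\Corad(W)=V$ and $W/V$ is an $S_i$-$S_j$-bicomodule. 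The paper, by contrast, never builds a comparison map at all: it invokes \cite[Lemma 3.9]{cuadra2}, namely $(I\wedge^C E)/I\cong C/I\,\square_C\,E$, decomposes $C/S_i\cong(\oplus_{k\neq i}E_k)\oplus E_i/S_i$, identifies the cotensor product with $S_j$ as the $S_j$-isotypic socle, and uses $\Hom_C(S_j,E_i/S_i)\cong\Ext^1_C(S_j,S_i)$ to get $(S_i\wedge S_j)/V\cong S_j^{(r)}$ with $r=\dim_{G_j}\Ext^1_C(S_j,S_i)$; the second valuation then comes for free from the fact that the left and right valued Gabriel quivers are mutually opposite, rather than from any bimodule bookkeeping.

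The genuine gap is that neither of your comparison maps is ever defined as a linear map. In direction (a), a class $[M]\in\Ext^1_C(S_j,S_i)$ does not ``determine a class in $W/V$'': the inclusion $\cf(M)\subseteq W$ only produces a \emph{subcoalgebra} of $W$, hence a subspace of $W/V$, not an element; moreover $[M]$ and $\lambda[M]$ for $\lambda\in k^*$ have isomorphic middle terms and the same coefficient coalgebra, so no assignment through $\cf(M)$ alone can be injective or linear. To repair this you must either (i) fix a vector-space splitting $M\cong S_i\oplus S_j$ and extract the off-diagonal matrix coefficients of the coaction of $M$ inside $W$, then verify independence of the splitting, additivity under Baer sum, and $G_i$-$G_j$-equivariance (the dual of the classical cocycle argument identifying $\Ext^1_A(S_j,S_i)$ with a dual of $e_i(J/J^2)e_j$ for algebras), or (ii) abandon the element-by-element comparison: observe that every extension of $S_j$ by $S_i$ lies in $\M^W$, that $W$ as a right $W$-comodule is the direct sum of $E_i^W$ and (for $i\neq j$) $E_j^W=S_j$, and that the connecting homomorphism $\Hom_W(S_j,E_i^W/S_i)\to\Ext^1_C(S_j,S_i)$ is bijective, which identifies $W/V\cong E_i^W/S_i$ with the socle computing the valuations. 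Option (ii) is, in substance, the paper's proof; so while your plan can be completed, the step you defer as ``bookkeeping'' is not bookkeeping---it is where the whole proof lives, and as submitted your argument establishes only the easy structural facts, not the isomorphism on which the equivalence and both valuations rest.
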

\begin{proof}
Following \cite[Lemma 3.9]{cuadra2}, for any right $C$-comodule $I$ and any subcoalgebra $E$, there exists an isomorphism $(I\wedge^C E)/I\cong C/I\square_C E$ of right $E$-comodules. Therefore,
$$\frac{S_i\wedge^C S_j}{S_i}\cong C/S_i\square_C S_j\cong (\oplus_{k\neq i}E_k\square_C S_j)\oplus E_i/S_i\square_C S_j$$
In general, for any right $C$-comodule $M$, $M\square_C S_j$ is the direct sum of the simple right $C$-comodules isomorphic to $S_j$ appearing in the socle of $M$. Therefore
$\oplus_{k\neq i}E_k\square_C S_j\cong S_j$ if $j\neq i$ and zero otherwise. Furthermore,
$E_i/S_i\square_C S_j\cong S_j^{(r)}$, where $$r=\dim_{G_j} \Hom_C(S_j,E_i/S_i)=\dim_{G_j}\Ext^C(S_j,S_i),$$ see \cite{navarro}. Now,
 $$\frac{S_i\wedge S_j}{S_i+S_j}\cong \frac{(S_i\wedge S_j)/S_i}{(S_i+S_j)/S_i}\cong
 S_j^{(r)}$$ as right $S_j$-comodules. Then, the equivalence and the calculation of the first component of the label hold.

We recall from \cite{GTN} or \cite{justus2} that the right valued Gabriel quiver and the left valued Gabriel quiver of $C$ are opposite one to each other. Now, it is enough to apply the left version of the formula of \cite[Lemma 3.9]{cuadra2} and a similar reasoning as above in order to calculate $d$ and prove the statement.
\end{proof}

This can be generalized taking into account the notion of predecessor defined in \cite{navarro}. We remind that, given a simple $C$-comodule $S_i$, we say that a simple $C$-comodule $S_j$ is an
$n$-\emph{predecessor} of $S_i$ if $\Ext^1_C(S_j, \Soc^nE_i)\neq
0$ for some $n>0$, or equivalently, if
$S_j\subseteq \Soc(E_i/\Soc^{n}E_i)\cong \Soc^{n+1}E_i/\Soc^{n} E_i$ for some $n>0$, where $E_i$ is the injective envelope of $S_i$.

\begin{proposition}
Let $C$ be a coalgebra and, $S_i$ and $S_j$ be two simple right $C$-comodules. $S_j$ is a $n$-predecessor of $S_i$ if and only if $(\Soc^{n} S_i\wedge^C S_j)/(\Soc^n S_i+S_j)\neq 0$. In such a case, the number of indecomposable direct summands of $\Soc^{n+1}E_i/\Soc^{n} E_i$ isomorphic to $S_j$ is $\dim_{G_j} (\Soc^{n} S_i\wedge^C S_j)/(\Soc^2 S_i+S_j)$ as right $G_j$-comodule.
\end{proposition}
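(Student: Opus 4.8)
The plan is to reproduce, almost verbatim, the argument that established Proposition~\ref{Gabrielquiver}, now taking as the starting comodule the $n$-th socle layer $\Soc^{n}E_i$ in place of the simple comodule $S_i=\Soc E_i$ (so the $S_i$ appearing in the statement should be read as $\Soc^{n}E_i$, which is the object occurring in the definition of an $n$-predecessor). Concretely, I would apply the isomorphism of \cite[Lemma 3.9]{cuadra2} with $I=\Soc^{n}E_i$ and $E=S_j$ to get
$$\frac{\Soc^{n}E_i\wedge^C S_j}{\Soc^{n}E_i}\cong C/\Soc^{n}E_i\,\square_C\,S_j$$
as right $S_j$-comodules. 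Since $\Soc^{n}E_i$ is a subcomodule of the single injective summand $E_i$ of $C=\bigoplus_{k\in I_C}E_k$, there is a decomposition $C/\Soc^{n}E_i\cong\big(\bigoplus_{k\neq i}E_k\big)\oplus\big(E_i/\Soc^{n}E_i\big)$, which is preserved by $-\square_C S_j$.

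Next I would evaluate each cotensor factor by the same principle used in the previous proof, namely that $M\square_C S_j$ reads off the $S_j$-isotypic part of $\Soc M$. For $k\neq i$ one has $\Soc E_k=S_k$, so $\bigoplus_{k\neq i}E_k\square_C S_j\cong S_j$ when $j\neq i$ and vanishes otherwise; and since by definition of the socle series $\Soc(E_i/\Soc^{n}E_i)=\Soc^{n+1}E_i/\Soc^{n}E_i$, the remaining factor is $E_i/\Soc^{n}E_i\,\square_C\,S_j\cong S_j^{(r)}$ with $r=\dim_{G_j}\Hom_C(S_j,\Soc^{n+1}E_i/\Soc^{n}E_i)$ equal to the number of indecomposable summands of that layer isomorphic to $S_j$. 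For $j\neq i$, passing to the quotient by $(\Soc^{n}E_i+S_j)/\Soc^{n}E_i\cong S_j$ then removes precisely the copy of $S_j$ produced by the $\bigoplus_{k\neq i}E_k$ term, leaving
$$\frac{\Soc^{n}E_i\wedge^C S_j}{\Soc^{n}E_i+S_j}\cong S_j^{(r)}.$$
Because $S_j$ is an $n$-predecessor of $S_i$ exactly when $S_j\subseteq\Soc^{n+1}E_i/\Soc^{n}E_i$, i.e. when $r\neq 0$, both the stated equivalence and the formula $r=\dim_{G_j}(\Soc^{n}E_i\wedge^C S_j)/(\Soc^{n}E_i+S_j)$ would follow at once.

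The step I expect to require the most care is the bookkeeping in the final quotient: I must check that $(\Soc^{n}E_i+S_j)/\Soc^{n}E_i$ is genuinely a single copy of $S_j$ lying in the $\bigoplus_{k\neq i}E_k$ summand. This rests on the observation that the only simple subcomodule of $\Soc^{n}E_i$ is $\Soc(\Soc^{n}E_i)=S_i$, so that $S_j\cap\Soc^{n}E_i=0$ for $j\neq i$, while the inclusion $S_j\subseteq\Soc^{n}E_i\wedge^C S_j$ sends $S_j$ into the socle of the $E_j$-summand under the projection to $C/\Soc^{n}E_i$. The same observation isolates the case $j=i$ (loops at $S_i$): there $S_i\subseteq\Soc^{n}E_i$ forces both $(\Soc^{n}E_i+S_i)/\Soc^{n}E_i=0$ and the vanishing of the $\bigoplus_{k\neq i}E_k$ term, so the quotient reduces directly to $E_i/\Soc^{n}E_i\,\square_C\,S_i\cong S_i^{(r)}$ and the count still equals the multiplicity of $S_i$ in $\Soc^{n+1}E_i/\Soc^{n}E_i$. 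Finally, I would remark that for $n=1$ the whole argument specializes to Proposition~\ref{Gabrielquiver}, confirming that this statement is the announced generalization.
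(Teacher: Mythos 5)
Your proof is correct and follows essentially the same route as the paper's: the paper likewise applies \cite[Lemma 3.9]{cuadra2} with $I=\Soc^{n}E_i$ (written there, with the same abuse of notation as in the statement, as $\Soc^{n}S_i$), uses the decomposition $C/\Soc^{n}E_i\cong(\oplus_{k\neq i}E_k)\oplus E_i/\Soc^{n}E_i$ together with the identity $\Soc(E_i/\Soc^{n}E_i)=\Soc^{n+1}E_i/\Soc^{n}E_i$, and then repeats the bookkeeping of Proposition \ref{Gabrielquiver}. Your additional care with the quotient by $(\Soc^{n}E_i+S_j)/\Soc^{n}E_i$ and with the loop case $j=i$ merely makes explicit what the paper compresses into ``it is similar to the former proof.''
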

\begin{proof}
It is similar to the former proof. Simply consider that
$$\frac{\Soc^nS_i\wedge^C S_j}{\Soc^nS_i}\cong C/\Soc^nS_i\square_C S_j\cong (\oplus_{k\neq i}E_k\square_C S_j)\oplus E_i/\Soc^nS_i\square_C S_j$$ and $\Soc (E_i/\Soc^nS_i)=\Soc^{n+1}E_i/\Soc^{n}E_i$.
\end{proof}

\begin{proposition}\label{simplearrows} Let $C$ be a coalgebra and, $A$ and $B$ two
subcoalgebras of $C$. Then the following assertions hold:
\begin{enumerate}[$(a)$]
\item Let $\mathfrak{G}_A=\{S_j\}_{j\in I_A}$, $\mathfrak{G}_B=\{S_k\}_{k\in I_B}$ and $\mathfrak{G}_{A\wedge^C B}=\{S_i\}_{i\in I_{A\wedge B}}$ be a complete set of
pairwise non-isomorphic simple right $A$-comodules,
$B$-comodules and $A\wedge^C B$-comodules, respectively. Then $\mathfrak{G}_{A\wedge^C B}=\mathfrak{G}_A\cup \mathfrak{G}_B$.
\item Given two simple $A\wedge^C B$-comodules $S$ and $T$:
$$S\wedge^{A\wedge B}T=\left\{
                        \begin{array}{ll}
                          S\oplus T & \text{if $S\nsubseteq A$ and $T\nsubseteq B$} \\
                          S\wedge^C T & \text{if $S\subseteq A$ and $T\subseteq B$} \\
                           S\wedge^A T & \text{if $S\subseteq A$ and $T\nsubseteq B$}\\
                           S\wedge^B T & \text{if $S\nsubseteq A$ and $T\subseteq B$}
                        \end{array}
                      \right.$$
\end{enumerate}
\end{proposition}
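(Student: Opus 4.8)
The plan is to reduce everything to an analysis of the single subcoalgebra $E:=S\wedge^C T$, using two elementary properties of the wedge. The first is \emph{monotonicity}: if $S\subseteq A$ and $T\subseteq B$ then $S\wedge^C T\subseteq A\wedge^C B$, directly from $S\otimes C+C\otimes T\subseteq A\otimes C+C\otimes B$. The second is the \emph{restriction identity} $S\wedge^D T=D\cap(S\wedge^C T)$ for every subcoalgebra $D$ with $S,T\subseteq D$; I would deduce it from the purely linear statement $(S\otimes C+C\otimes T)\cap(D\otimes D)=S\otimes D+D\otimes T$, proved by splitting $C=D\oplus D'$ and comparing the four components of $(D\oplus D')^{\otimes 2}$. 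Throughout I identify a simple comodule with its simple coefficient subcoalgebra, so that ``$S\subseteq A$'' is meaningful, and I note that two distinct simple subcoalgebras intersect in $0$.

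For $(a)$ the inclusion $\mathfrak G_A\cup\mathfrak G_B\subseteq\mathfrak G_{A\wedge B}$ is immediate since $A,B\subseteq A\wedge^C B$. For the converse let $R$ be a simple subcoalgebra of $A\wedge^C B$; since $R$ is simple, $R\cap A,R\cap B\in\{0,R\}$, so it suffices to rule out $R\cap A=R\cap B=0$. In that case $pr_A|_R$ and $pr_B|_R$ are injective, hence so is $pr_A|_R\otimes pr_B|_R$ on $R\otimes R$; but $R\subseteq A\wedge^C B$ gives $(pr_A\otimes pr_B)\Delta_R=0$, forcing $\Delta_R=0$, which is absurd because $(\epsilon\otimes\mathrm{id})\Delta_R=\mathrm{id}_R$ and $R\neq 0$. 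Thus $R\subseteq A$ or $R\subseteq B$.

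For $(b)$ the restriction identity turns the four claims into the computation of $(A\wedge^C B)\cap E$. The key device is the following lemma: \emph{if $y\in E$ and $y\notin T$, then the subcoalgebra generated by $y$ contains $S$.} To prove it write $\Delta y=\sum s_\alpha\otimes c_\alpha+\sum d_\beta\otimes t_\beta\in S\otimes C+C\otimes T$; for $g\in T^{\perp}$ the element $(\mathrm{id}\otimes g)\Delta y=\sum s_\alpha\,g(c_\alpha)$ lies in $S$ and in the generated subcoalgebra, and it is nonzero for some $g$ (otherwise $\Delta y\in C\otimes T$, whence $y=(\epsilon\otimes\mathrm{id})\Delta y\in T$); since $S$ is simple this forces $S\subseteq\langle y\rangle$. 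Applying this inside the subcoalgebra $A$ yields, when $S\not\subseteq A$, that $A\cap E\subseteq T$ (no $y\in A\cap E\setminus T$ can exist, as it would give $S\subseteq A$); symmetrically, $T\not\subseteq B$ gives $B\cap E\subseteq S$.

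I then dispatch the cases. The case $S\subseteq A,\ T\subseteq B$ follows at once from monotonicity, $(A\wedge^C B)\cap E=E=S\wedge^C T$. For the others, write $\Delta x=\sum_i s_i\otimes a_i+\sum_j b_j\otimes t_j\in S\otimes E+E\otimes T$ for $x\in E\cap(A\wedge^C B)$ (with $\{s_i\},\{t_j\}$ bases), and apply $pr_A\otimes pr_B$, which is zero. If $S\subseteq A$ the first sum dies and, as $pr_B|_T$ is injective ($T\not\subseteq B$), comparing coefficients gives $b_j\in A$; then $x=(\mathrm{id}\otimes\epsilon)\Delta x\in A$, i.e. $x\in A\cap E=S\wedge^A T$ by restriction — this is the case $S\subseteq A,\ T\not\subseteq B$, and the symmetric computation gives $x\in B=S\wedge^B T$ in the case $S\not\subseteq A,\ T\subseteq B$. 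In the remaining case $S\not\subseteq A,\ T\not\subseteq B$, projecting the relation onto the summand $U'\otimes pr_B(T)$ (with $C/A=pr_A(S)\oplus U'$) kills the first sum and, by injectivity of $pr_B|_T$, forces $pr_A(b_j)\in pr_A(S)$, i.e. $b_j\in S+A$; the counit then gives $x\in S+A$, so writing $x=s+a$ with $a=x-s\in A\cap E\subseteq T$ (the lemma) we obtain $x\in S+T=S\oplus T$. The step I expect to be the real obstacle is exactly this last case: controlling the second Loewy layer of $S\wedge^C T$ inside $A\wedge^C B$, for which the generated-subcoalgebra lemma is decisive; making its leg bookkeeping precise in the non-pointed setting (where $S,T$ are matrix coalgebras rather than grouplikes) is the only genuinely delicate point, the rest being routine manipulation with the counit and the restriction identity.
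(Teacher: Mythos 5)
Your proof is correct. For part $(a)$ and for three of the four cases of $(b)$ it runs essentially along the paper's lines: the easy inclusion $\mathfrak{G}_A\cup\mathfrak{G}_B\subseteq\mathfrak{G}_{A\wedge B}$ plus a projection argument for the converse, monotonicity of the wedge, the restriction identity $S\wedge^D T=D\cap(S\wedge^C T)$ (which the paper invokes without proof and you actually justify via the splitting $C=D\oplus D'$), and the counit bookkeeping that forces $x\in A$ in the mixed cases. The genuine divergence is the case $S\nsubseteq A$, $T\nsubseteq B$. The paper argues representation-theoretically: a hypothetical non-split extension $0\rightarrow S\rightarrow M\rightarrow T\rightarrow 0$ of $A\wedge^C B$-comodules is embedded in $A\wedge^C B$, the hypothesis $S\nsubseteq A$ gives $A\cap M=0$, hence $\Delta(M)\subseteq M\otimes B$, so $M$ (and hence its quotient $T$) is a right $B$-comodule, a contradiction; this shows $\Ext^1_{A\wedge B}(T,S)=0$, and the wedge--Ext dictionary of Proposition \ref{Gabrielquiver} converts that vanishing into $S\wedge^{A\wedge B}T=S\oplus T$. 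You instead compute $(A\wedge^C B)\cap(S\wedge^C T)$ directly by linear algebra, with your generated-subcoalgebra lemma ($y\in(S\wedge^C T)\setminus T$ forces $S\subseteq\langle y\rangle$, so $S\nsubseteq A$ gives $A\cap(S\wedge^C T)\subseteq T$) doing the work that the socle argument does in the paper. Your route is more elementary and self-contained: it never leaves the coalgebra $C$, needs neither injective envelopes nor Proposition \ref{Gabrielquiver}, and it incidentally bypasses a slip in the paper's own write-up, which derives the contradiction from ``$S$ is a right $B$-comodule'' when the contradiction actually comes from $T\subseteq B$. What the paper's route buys is brevity and an explicit link between the wedge and $\Ext^1$, which is the form in which this proposition feeds into Theorem \ref{strongly}. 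Two small points you should make explicit: in the case $S\subseteq A$, $T\nsubseteq B$ you need $T\subseteq A$ (which follows from your part $(a)$) for $S\wedge^A T$ to make sense and for the restriction identity to apply with $D=A$; and in the case $S\nsubseteq A$, $T\nsubseteq B$, part $(a)$ likewise gives $S\neq T$, so that $S+T$ is indeed the direct sum $S\oplus T$.
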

\begin{proof}
\begin{enumerate}[$(a)$]
\item
It is obvious that $\mathfrak{G}_A\cup \mathfrak{G}_B \subseteq \mathfrak{G}_{A\wedge^C B}$ since $A$ and $B$ are subcoalgebras of $A\wedge^C B$.
Let now $S$ be a simple right $A\wedge^C B$-comodule.
Then $\Delta(S)\subseteq S\otimes (A\wedge^C B)$ and, since $S\subseteq A\wedge^C B$, $\Delta(S)\subseteq A\otimes C+C\otimes B$. Now, if $S\subseteq A$, $S$ is a $A$-comodule. If not, $S\cap A=\emptyset$ so $\Delta(S) \subseteq S\otimes B$, and hence $S$ is a $B$-comodule.
\item If $S\subseteq A$ and $T\subseteq B$, $S\wedge^C T\subseteq A\wedge^C B$, and therefore $$S\wedge^{A\wedge B} T=(S\wedge^C T)\cap(A\wedge^C B)=S \wedge^C T.$$

    Assume that $S\nsubseteq A$ and $T\nsubseteq B$ and let $0\rightarrow S \rightarrow M \rightarrow T \rightarrow 0$ be a non-split short exact sequence of $A\wedge^C B$-comodules. Then $M$ is a serial $A\wedge^C B$-comodule with simple socle $A\subseteq B$. In particular, $A\cap M=0$. Since $M\subset A\wedge^C B$ and $\Delta(M)\subseteq M\otimes C$, it follows that $\Delta(M)\subseteq M\otimes B$ and so $M$ is a right $B$-comodule. Hence $S$ is a right $B$-comodule and we get a contradiction. Thus $\Ext^{A\wedge B}(T,S)=0$. In particular $S\wedge^{A\wedge B}T=S\oplus T$.

Let us suppose that $S\subseteq A$ and $T\nsubseteq B$, and $x\in S\wedge^{A\wedge B} T$. Then we may write $\Delta(x)=\sum x_i\otimes y_i+\sum x_j\otimes y_j$, where $x_i\in S$, $y_j\in T$ and $y_i,x_j\in A\wedge B$. Nevertheless $x\in A\wedge^C B$ and so $\Delta(x)\in A\otimes C+C\otimes B$. Since $T\nsubseteq B$, we deduce that the elements $x_j\in A$. Thus $\Delta(x)\in A\otimes (A\wedge^C B)$ and $x=(id \otimes \epsilon)\Delta (x) \in A$. Hence $x\in S\wedge^A T$. Analogously, if $S\nsubseteq A$ and $T\subseteq B$, $S\wedge^{A\wedge B}T=S\wedge^B T$.
\end{enumerate}
\end{proof}

\begin{theorem}\label{strongly}
 Let $C$ be a coalgebra and $Q_C$ its valued Gabriel quiver. Let $A$ and $B$ two
subcoalgebras of $C$ whose valued Gabriel quiver are $Q_A$ and $Q_B$, respectively. Then the valued Gabriel quiver of $A\wedge^C B$ is described as follows:
\begin{enumerate}
\item The set of vertices of $Q_{A\wedge B}$ is the union of the set of vertices of $Q_A$ and $Q_B$, both viewed as valued subquivers of $Q_C$.
\item Given two simple $A\wedge^C B$-comodules $S$ and $T$.
\begin{enumerate}[i)]
\item If $S\nsubseteq A$ and $T\nsubseteq  B$, there is no an arrow  in $Q_{A\wedge^C B}$ from $T$ to $S$.
\item If $S\subseteq A$ and $T\subseteq B$, there is an arrow $\xymatrix{ T \ar[r]^-{(c,d)} & S}$ in $Q_{A\wedge^C B}$ if and only if it is so in $Q_C$.
\item If $S\subseteq A$ and $T\nsubseteq B$, there is an arrow $\xymatrix{ T \ar[r]^-{(c,d)} & S}$ in $Q_{A\wedge^C B}$ if and only if it is so in $Q_A$.
\item If $S\nsubseteq A$ and $T\subseteq B$, there is an arrow $\xymatrix{ T \ar[r]^-{(c,d)} & S}$ in $Q_{A\wedge^C B}$ if and only if it is so in $Q_B$.
\end{enumerate}
\end{enumerate}
\end{theorem}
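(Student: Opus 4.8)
The plan is to read the result directly off the two preceding propositions. Proposition \ref{simplearrows}(a) supplies the vertices and records where each simple lives, while Proposition \ref{Gabrielquiver}, which reformulates an arrow as the nonvanishing of a wedge quotient of simples, lets me transport every arrow question from $A\wedge^C B$ to the appropriate ambient coalgebra once Proposition \ref{simplearrows}(b) identifies the relevant wedge.

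First I would settle assertion (1): by Proposition \ref{simplearrows}(a) the complete set of simple $A\wedge^C B$-comodules is $\mathfrak{G}_A\cup\mathfrak{G}_B$, which is exactly the claim about vertices. I would also record here two facts used repeatedly below. Since $A$ and $B$ are subcoalgebras of $A\wedge^C B$, the dichotomy of part (a) says that a simple failing to lie in $A$ must lie in $B$ and conversely; in particular the hypotheses of cases (iii) and (iv) force $T\subseteq A$ and $S\subseteq B$ respectively, so that the wedges $S\wedge^A T$ and $S\wedge^B T$ are defined. Moreover, viewing a fixed simple $S$ over any subcoalgebra containing it leaves its comodule structure map unchanged, hence leaves $\End(S)$ and the division ring $G_S$ unchanged; this is what makes the valuations insensitive to the ambient coalgebra.

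Then, for assertion (2), I would fix simples $S,T$ and apply Proposition \ref{Gabrielquiver} to the coalgebra $D=A\wedge^C B$: an arrow $T\to S$ exists in $Q_{A\wedge^C B}$ if and only if $(S\wedge^{A\wedge B}T)/(S+T)\neq 0$, with valuation the pair of $G_T$- and $G_S$-dimensions of this quotient. Substituting the four formulas of Proposition \ref{simplearrows}(b) then closes each case. In case (i) the wedge is $S\oplus T=S+T$, so the quotient is zero and no arrow occurs. In case (ii) the wedge is $S\wedge^C T$, so both the nonvanishing condition and the valuation coincide \emph{verbatim} with the output of Proposition \ref{Gabrielquiver} applied to $C$, giving an arrow precisely when one exists in $Q_C$, with the same label; cases (iii) and (iv) are identical with $C$ replaced by $A$ and by $B$.

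The only genuine subtlety, and the step I would write out most carefully, is the compatibility of the valuations across the different coalgebras: I must check that the wedge $S\wedge T$ and the division rings $G_S,G_T$ computed inside $A\wedge^C B$ agree with those computed inside $C$, $A$ or $B$. This is precisely the content of Proposition \ref{simplearrows}(b) for the wedges, together with the invariance of $\End(S)$ under change of ambient coalgebra noted above; once both are in hand, the formulas $d^1=\dim_{G_T}(S\wedge T)/(S+T)$ and $d^2=\dim_{G_S}(S\wedge T)/(S+T)$ transfer unchanged and the labels match. Everything else is routine case-checking.
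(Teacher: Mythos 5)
Your proposal is correct and follows exactly the paper's route: the paper's entire proof of Theorem \ref{strongly} is the one-line observation that it follows from Propositions \ref{Gabrielquiver} and \ref{simplearrows}, which is precisely your argument. Your additional care about the invariance of the division rings $G_S$, $G_T$ under change of ambient coalgebra makes explicit a point the paper leaves tacit, but it is the same proof.
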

\begin{proof}
It follows from Propositions \ref{Gabrielquiver} and \ref{simplearrows}.
\end{proof}

From the former result we may deduce the following corollaries.

\begin{corollary}\label{fullquiver} Let $C$ be a coalgebra, $Q_C$ its valued Gabriel quiver and $A$ a subcoalgebra of $C$. Then $Q_{A\wedge A}$ is the full valued subquiver of $Q_C$ whose set of vertices are the simple $C$-comodules contained in $A$.
\end{corollary}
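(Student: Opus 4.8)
The plan is to specialize Theorem \ref{strongly} to the degenerate case $B=A$ and simply read off the vertices and the arrows. Since the corollary is an immediate consequence of that theorem together with Proposition \ref{simplearrows}, there is no genuine obstacle here; the only points requiring care are checking that the exceptional arrow-cases of the theorem become vacuous and that what remains is exactly the defining property of a \emph{full} valued subquiver.

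First I would determine the vertex set. Taking $B=A$ in Proposition \ref{simplearrows}(a), the complete set of pairwise non-isomorphic simple right $A\wedge^C A$-comodules is $\mathfrak{G}_A\cup\mathfrak{G}_A=\mathfrak{G}_A$. Because $A$ is a subcoalgebra of $C$, a simple right $A$-comodule is nothing but a simple right $C$-comodule contained in $A$, and conversely. Hence the vertex set of $Q_{A\wedge A}$ is precisely the set of simple $C$-comodules contained in $A$, which is the claimed vertex set of the full subquiver; this also matches part (1) of Theorem \ref{strongly}, whose union $Q_A\cup Q_A$ collapses to $Q_A$.

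Next I would treat the arrows. Let $S$ and $T$ be two simple $A\wedge^C A$-comodules. By the previous paragraph both satisfy $S\subseteq A$ and $T\subseteq A$, so among the four cases of Theorem \ref{strongly}(2) only case ii) can occur: the cases involving $S\nsubseteq A$ or $T\nsubseteq A$ are vacuous. Case ii) asserts that there is a valued arrow $\xymatrix{ T \ar[r]^-{(c,d)} & S}$ in $Q_{A\wedge A}$ if and only if there is such an arrow in $Q_C$. Thus both the existence of arrows and their valuations between vertices lying in $A$ are inherited unchanged from $Q_C$.

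Combining the two observations, $Q_{A\wedge A}$ has as vertices exactly the simple $C$-comodules contained in $A$, and between any two such vertices it carries precisely the same valued arrows as $Q_C$. This is exactly the definition of the full valued subquiver of $Q_C$ spanned by these vertices, which completes the argument.
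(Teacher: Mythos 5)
Your proposal is correct and follows exactly the route the paper intends: the paper states this corollary as an immediate consequence of Theorem \ref{strongly} (with no written proof), and your argument is precisely that specialization to $B=A$, using Proposition \ref{simplearrows}(a) to see that all simple $A\wedge^C A$-comodules lie in $A$ so that only case ii) of the theorem is relevant.
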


\begin{corollary} Let $C$ be a coalgebra and $A$ a subcoalgebra of $C$. If $C=A\wedge^C A$, then $Q_{C}=Q_{A}$ as valued quivers.
\end{corollary}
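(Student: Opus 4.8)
The plan is to reduce the whole statement to Corollary \ref{fullquiver} together with the vertex description coming from Proposition \ref{simplearrows}. First I would settle the vertices. Applying Proposition \ref{simplearrows}$(a)$ with $B=A$ gives $\mathfrak{G}_{A\wedge^C A}=\mathfrak{G}_A\cup\mathfrak{G}_A=\mathfrak{G}_A$, so under the hypothesis $C=A\wedge^C A$ the simple right $C$-comodules are exactly the simple right $A$-comodules; in particular every simple $C$-comodule is contained in $A$, and the vertex sets of $Q_C$ and $Q_A$ coincide.

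Next I would compare arrows. By Corollary \ref{fullquiver}, $Q_{A\wedge A}$ is the full valued subquiver of $Q_C$ spanned by the simples contained in $A$. Since $C=A\wedge^C A$ and, by the previous paragraph, all simples lie in $A$, this merely says that $Q_C=Q_{A\wedge A}$ is the full subquiver of $Q_C$ on all of its vertices. Hence proving $Q_C=Q_A$ is the same as identifying $Q_A$ with that full subquiver, i.e. as showing that for any two simple comodules $S,T\subseteq A$ there is a valued arrow $T\to S$ in $Q_A$ if and only if there is one in $Q_C$, with the same valuation. By Proposition \ref{Gabrielquiver} this is governed by the spaces $(S\wedge^A T)/(S+T)$ and $(S\wedge^C T)/(S+T)$, equivalently by $\Ext^1_A(T,S)$ and $\Ext^1_C(T,S)$. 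As $A$ is a subcoalgebra one has $S\wedge^A T=(S\wedge^C T)\cap A$ (exactly the identity used in the proof of Proposition \ref{simplearrows}$(b)$), and the division rings $G_i=\End_C(S_i)=\End_A(S_i)$ agree; thus both space and valuation coincide as soon as $S\wedge^C T\subseteq A$. The statement therefore reduces to the containment $S\wedge^C T\subseteq A$ for all simple $S,T\subseteq A$, equivalently to the claim that every length-two $C$-comodule extension $M$ of simples contained in $A$ is already an $A$-comodule, i.e. that its coefficient coalgebra $\cf(M)$ lies in $A$.

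The hard part will be precisely this last containment, and here I would proceed cautiously. It does not follow from $S,T\subseteq A$ alone, since only $S\wedge^C T\subseteq A\wedge^C A=C$ is automatic; the hypothesis $C=A\wedge^C A$ must enter in an essential way, presumably through an argument on coefficient subcoalgebras showing that $\Soc\,\cf(M)\subseteq A$ forces $\cf(M)\subseteq A$. I would be wary, however, because the hypothesis is genuinely weaker than this conclusion: taking $A=\Soc C$ gives $A\wedge^C A=\Soc C\wedge^C\Soc C$, so whenever $C$ equals the second term of its coradical filtration the hypothesis $C=A\wedge^C A$ holds while $Q_A$ is discrete (the coradical being cosemisimple) yet $Q_C$ carries arrows. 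This shows the containment $S\wedge^C T\subseteq A$, and hence the equality $Q_C=Q_A$ as phrased, cannot hold without a further restriction on $A$; this is the subtle point on which the whole argument turns and the place where I would expect to have to strengthen either the hypothesis or the claim.
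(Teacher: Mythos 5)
You are right, and your counterexample exposes a genuine problem: the corollary as printed is false, so no blind proof attempt could succeed. With $A=\Soc C$ the hypothesis $C=A\wedge^C A$ says precisely that $C$ equals the second term $C_1$ of its coradical filtration, and this is compatible with $Q_C$ having arrows. Concretely, let $C=kx\oplus ka$ be the subcoalgebra of the path coalgebra of the one-loop quiver spanned by the vertex $x$ and the loop $a$, and let $A=kx$. Then $\Delta(a)=x\otimes a+a\otimes x\in A\otimes C+C\otimes A$, so $C=A\wedge^C A$, while $Q_A$ is a single vertex with no arrows and $Q_C$ is a loop, since $(S\wedge^C S)/S=C/kx\neq 0$ and Proposition \ref{Gabrielquiver} then produces an arrow. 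Your positive steps are also sound: the vertex sets do coincide (Proposition \ref{simplearrows}(a), together with the observation that a simple subcomodule of $C$ which is an $A$-comodule lies in $A$), and equality of arrows and valuations would follow from the containment $S\wedge^C T\subseteq A$ for simples $S,T\subseteq A$, via $S\wedge^A T=(S\wedge^C T)\cap A$; that containment is exactly what the hypothesis fails to give.

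The paper offers no written proof --- the corollary is presented as an immediate consequence of Theorem \ref{strongly} --- and your reduction shows exactly where that derivation collapses. When $B=A$ and both simples lie in $A$, only case ii) of Theorem \ref{strongly} (equivalently, the second case of Proposition \ref{simplearrows}(b)) applies, and it compares the arrows of $Q_{A\wedge A}$ with those of $Q_C$, never with those of $Q_A$; hence under $C=A\wedge^C A$ the theorem yields only the tautology $Q_C=Q_C$. The statement does become true under the stronger hypothesis $C_1\subseteq A$, for then $S\wedge^C T\subseteq \Soc C\wedge^C \Soc C=C_1\subseteq A$ and the argument you outline goes through verbatim; and what the paper actually uses afterwards, in the corollary on hereditary coalgebras, is the different and correct fact that a coidempotent subcoalgebra $A=A\wedge^C A$ satisfies $Q_A=Q_{A\wedge A}$, which by Corollary \ref{fullquiver} is a full valued subquiver of $Q_C$. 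So your refusal to complete the proof is the right call: as stated, the corollary admits no proof, and either the hypothesis must be strengthened (e.g.\ to $C_1\subseteq A$) or the conclusion weakened (e.g.\ to equality of the vertex sets).
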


\begin{corollary} Let $C$ be a hereditary coalgebra with separable coradical and $A$ a subcoalgebra of $C$. Then the following conditions are equivalent:
\begin{enumerate}[$a)$]
\item $A$ is coidempotent, that is, $A\wedge^C A=A$.
\item $A$ is hereditary and $Q_A$ is the full subquiver of $Q_C$ whose vertices are the simple $A$-comodules.
\end{enumerate}
\end{corollary}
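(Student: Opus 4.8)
The plan is to reduce everything to the structural fact that a hereditary coalgebra with separable coradical is, up to isomorphism, the path (cotensor) coalgebra $kQ_C$ of its valued Gabriel quiver; see \cite{chin} and \cite{simson06}. Both implications then become statements about full subpath coalgebras inside $kQ_C$. Throughout I would use that the coradical $A_0=A\cap C_0$ of any subcoalgebra $A$ is again separable, being a subcoalgebra of the separable coalgebra $C_0$, so that the structure theorem is available for $A$ as soon as $A$ is known to be hereditary.

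For $(a)\Rightarrow(b)$, the quiver assertion is immediate: since $A=A\wedge^C A$, Corollary \ref{fullquiver} gives that $Q_A=Q_{A\wedge A}$ is precisely the full valued subquiver of $Q_C$ on the simple comodules contained in $A$. It remains to prove that $A$ is hereditary. Identifying $C$ with $kQ_C$, I would show that a coidempotent subcoalgebra is forced to be the full subpath coalgebra spanned by all paths of $Q_C$ whose vertices lie in $I_A$ (which is itself a path coalgebra, hence hereditary). The point is that any ``relation'', i.e. a path $p$ with an interior or end vertex outside $I_A$ occurring in an element of $A$, produces in $\Delta(p)$ a term $\eta\otimes\tau$ that can be absorbed neither in $A\otimes C$ nor in $C\otimes A$; dually, $A^\perp$ would fail to be a closed idempotent ideal of $C^*$, that is $\overline{(A^\perp)^2}\subsetneq A^\perp$, contradicting $A\wedge^C A=((A^\perp)^2)^\perp=A=(A^\perp)^\perp$. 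Hence $A$ has no relations and coincides with the full subpath coalgebra.

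For $(b)\Rightarrow(a)$, by the structure theorem both $C\cong kQ_C$ and $A\cong kQ_A$ are path coalgebras, and the hypothesis says $Q_A$ is the full valued subquiver of $Q_C$ on $I_A$. I would first argue that the embedding $A\hookrightarrow C$ is the standard one, so that $A$ is the full subpath coalgebra $kQ_A\subseteq kQ_C$: the coradical $A_0$ is a subcoalgebra of $C_0$, and the degree-one part $A_1/A_0$ is the full sub-bicomodule of $C_1/C_0$ selected by $I_A$, which by the coradically graded structure determines $A$ inside $C$. Then $A\wedge^C A=A$ is a direct verification: for $x\in A\wedge^C A$ one has $\Delta(x)\in A\otimes C+C\otimes A$, and splitting each path occurring in $x$ at its vertices shows, exactly as above, that every such path has all its vertices in $I_A$, i.e. $x\in A$; the reverse inclusion $A\subseteq A\wedge^C A$ always holds. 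Alternatively one may conclude from Corollary \ref{fullquiver} that $A\subseteq A\wedge^C A$ is an inclusion of coalgebras with the same coradical and the same Gabriel quiver $Q_A$, and invoke the rigidity of the coradically graded structure to force equality.

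The main obstacle I expect is this rigidity step: passing from ``$A$ has the correct Gabriel quiver'' to ``$A$ is literally the full subpath coalgebra of $kQ_C$''. The example of a truncated coalgebra, where a proper subcoalgebra can exhibit the full Gabriel quiver yet fail to be coidempotent because the omitted long paths make $A^\perp$ a non-idempotent ideal, shows that the quiver datum alone is insufficient and that hereditariness and coidempotency must genuinely be used to exclude relations and twisted embeddings. Making the cancellation argument for $\Delta(x)\in A\otimes C+C\otimes A$ fully rigorous, keeping track of all path splittings, is the only technically delicate point; the rest is bookkeeping with the coradical filtration.
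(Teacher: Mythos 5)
There is a genuine gap, and it lies in your very first move: you identify a hereditary coalgebra with separable coradical with the path coalgebra $kQ_C$ of its valued Gabriel quiver. That identification is only valid for \emph{pointed} coalgebras (e.g.\ basic coalgebras over an algebraically closed field). Separable coradical is strictly weaker: the simple comodules may have nontrivial endomorphism division algebras $G_i$, and the correct structure theorem --- the one the paper actually invokes, from \cite{formallysmooth} --- is that $C\cong T_{C_0}(C_1/C_0)$, the \emph{cotensor} coalgebra of the $C_0$-bicomodule $C_1/C_0$ over the cosemisimple coalgebra $C_0$. The valued quiver $(Q_C,d_C)$ only records the dimensions $d'_{ij},d''_{ij}$; it does not determine the bicomodule $C_1/C_0$, so there is no canonical $kQ_C$ to work inside. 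Consequently all of your path-splitting combinatorics (splitting $\Delta(p)$ at vertices, absorbing terms into $A\otimes C+C\otimes A$, cells through outside vertices) has no meaning in the stated generality; it would have to be redone in terms of the grading $T_{C_0}(N)=C_0\oplus N\oplus (N\square_{C_0}N)\oplus\cdots$ and cotensor products. In the pointed case your combinatorial characterization (coidempotent $\Leftrightarrow$ full subpath coalgebra on a subset of vertices) is in fact correct and provable, but it does not establish the corollary as stated.

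The two steps you yourself flag as delicate are precisely what the paper outsources to \cite{formallysmooth}. For $(a)\Rightarrow(b)$: coidempotency gives $A=(A_0)^\infty$ (since $A_n\subseteq (A_0)^n\subseteq \wedge^{n+1}A=A$), and \cite[Theorem 2.6]{formallysmooth} gives $(A_0)^\infty\cong T_{A_0}((A_0\wedge^C A_0)/A_0)=T_{A_0}(A_1/A_0)$, hence $A$ is hereditary --- no explicit description of $A$ inside $C$ is needed. For $(b)\Rightarrow(a)$: \cite[Corollary 2.7]{formallysmooth} gives canonical embeddings $A\subseteq A\wedge^C A\hookrightarrow T_{A_0}(A_1/A_0)$ (using Corollary \ref{fullquiver} to see that $A\wedge^C A$ has the same coradical and the same first filtration piece as $A$), and hereditariness of $A$ makes the composite an isomorphism, forcing $A\wedge^C A=A$; this is essentially your ``alternative'' route made precise. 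By contrast, your primary route via ``rigidity of the coradically graded structure'' is false as a general principle: two subcoalgebras with the same coradical and the same Gabriel quiver need not coincide (your own truncated example, or $\mathrm{span}\{1,x\}\subseteq k[x]$, shows this). It becomes true only through the embedding theorem for subcoalgebras of hereditary coalgebras --- that is, through exactly the citation your proposal is missing.
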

\begin{proof}
Let us assume that $A$ is coidempotent. Then $Q_A=Q_{A\wedge A}$ and hence,
 by Corollary \ref{fullquiver}, $Q_A$ is the full quiver of $Q_C$
 whose vertices are the simple $A$-comodules. Since $C$ is hereditary,
by \cite[Corollary 2.7]{formallysmooth}, we may consider
the inclusions $$A_0\subseteq A\hookrightarrow T_{A_0}(A_1/A_0)\subseteq T_{C_0}(C_1/C_0)=C,$$
 where, by $T_D(N)$, we denote the
cotensor coalgebra of a coalgebra $D$ over a $D$-bicomodule $N$.
By \cite[Theorem 2.6]{formallysmooth}, $(A_0)^\infty\cong T_{A_0}((A_0\wedge^C A_0)/A_0)$, where
$(A_0)^\infty=\cup_{n\geq 0} (A_0)^n$ with $(A_0)^0=A_0$ and $(A_0)^n=(A_0)^{n-1}\wedge^C A_0$
for $n>0$. Now, since $A$ is coidempotent, $(A_0)^n=A_n$ the $n$th piece of the coradical filtration
of $A$ for any $n\geq 0$ and then $A\cong (A_0)^\infty \cong T_{A_0}(A_1/A_0)$.
Therefore $A$ is hereditary.

Conversely, by Corollary \ref{fullquiver}, $Q_A=Q_{A\wedge A}$. Then
$A_0=(A\wedge^CA)_0$ and $A_1=(A\wedge^C A)_1$. By \cite[Corollary 2.7]{formallysmooth},
$$A\subseteq (A\wedge^C A)\hookrightarrow T_{A_0}(A_1/A_0).$$
Since $A$ is hereditary, $T_{A_0}(A_1/A_0)\cong A$ and then $A\wedge^CA=A$.

\end{proof}

\section{Semiprime coalgebras}

Let us introduce semiprime coalgebras as a coalgebraic analog to semiprime algebras.
 We will say that $C$ is semiprime if, whether $C=A\wedge A$, for some subcoalgebra
 $A$ of $C$, it happens that $C=A$. Obviously, if $C$ is prime, then $C$ is semiprime.
Also, it is easy to see that $C$ is semiprime if and only if $C^*$ is semiprime.

\begin{lemma}
$C$ is semiprime if and only if it satisfies that, in case $C=\wedge^n A$ for some $n\in \mathbb{N}$, then $C=A$.
\end{lemma}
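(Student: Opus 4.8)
The plan is to treat the two implications separately, with essentially all the content residing in one of them. The reverse implication is immediate: the hypothesis that ``$C=\wedge^n A$ implies $C=A$'' for every $n\in\mathbb{N}$ specialises, at $n=2$, to the very definition of semiprimality, since $\wedge^2 A=A\wedge A$. So I would dispose of this direction in a single line.

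For the forward implication, assume $C$ is semiprime and that $C=\wedge^n A$ for some $n$; I want to conclude $C=A$. The two facts I would rely on are standard properties of the wedge product \cite{sweedler}: first, that it is \emph{associative}, so that $\wedge^n A$ is unambiguous and $(\wedge^a A)\wedge^C(\wedge^b A)=\wedge^{a+b}A$; and second, that $X+Y\subseteq X\wedge^C Y$ for any subcoalgebras $X,Y$ of $C$ (which follows at once from $\Delta(X)\subseteq X\otimes X\subseteq X\otimes C$ and $\Delta(Y)\subseteq C\otimes Y$). The latter yields the ascending chain $A\subseteq \wedge^2 A\subseteq \wedge^3 A\subseteq\cdots$ of subcoalgebras of $C$.

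The key reduction is a halving argument. Set $m=\lceil n/2\rceil$ and $B=\wedge^m A$. By associativity $B\wedge^C B=\wedge^{2m}A$, and since $2m\geq n$ while the chain above is increasing, $C=\wedge^n A\subseteq \wedge^{2m}A\subseteq C$, whence $B\wedge^C B=C$. Semiprimality then forces $C=B=\wedge^m A$. Thus from $C=\wedge^n A$ I obtain $C=\wedge^{\lceil n/2\rceil}A$, with a strictly smaller exponent whenever $n\geq 2$. I would then finish by strong induction on $n$: the base case $n=1$ reads $C=\wedge^1 A=A$, and the inductive step is exactly the halving step just described, which drops the exponent below $n$ and lets the induction hypothesis apply to conclude $C=A$.

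The argument is short and I expect no serious obstacle; the only point requiring care is the bookkeeping that legitimises the halving, namely the monotonicity of the wedge chain together with the identity $(\wedge^m A)\wedge^C(\wedge^m A)=\wedge^{2m}A$, both of which rest on the associativity of the wedge. Once these are recorded, the induction closes automatically.
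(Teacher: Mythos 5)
Your proof is correct and takes essentially the same route as the paper: the paper's argument is the same halving reduction, treating even $n$ directly via $C=(\wedge^{n/2}A)\wedge(\wedge^{n/2}A)$ and odd $n$ by first passing to $C=\wedge^{n+1}A$, which is precisely your $m=\lceil n/2\rceil$ step with the two parities unified. Your write-up merely makes explicit the associativity and monotonicity of the wedge that the paper uses tacitly.
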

\begin{proof}
It is enough to prove the necessity of that property. But note that, if $n$ is even, $C=(\wedge^{n/2} A) \wedge (\wedge^{n/2}A)$ and, by hypothesis, $C=\wedge^{n/2}A$. Similarly, if $n$ is odd, $C=\wedge^n A=\wedge^{n+1} A$, and then $C=\wedge^{(n+1)/2}A$. So we may reduce the parameter until $C=A\wedge A$, and then $C=A$.
\end{proof}

\begin{corollary}
Let $C$ be a semiprime coalgebra with finite coradical filtration then $C$ is semisimple.
\end{corollary}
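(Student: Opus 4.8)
The plan is to combine the Lemma just established with the classical description of the coradical filtration in terms of iterated wedge products, thereby reducing the statement to a one-line deduction.

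First I would recall from \cite{sweedler} that the coradical filtration $C_0\subseteq C_1\subseteq\cdots$ of $C$ is given by the iterated wedge powers of the coradical, namely $C_n=\wedge^{n+1}C_0=C_0\wedge^C C_0\wedge^C\cdots\wedge^C C_0$ ($n+1$ factors) for every $n\geq 0$; equivalently $C_n=C_0\wedge^C C_{n-1}$. The hypothesis that $C$ has finite coradical filtration means precisely that this increasing, exhaustive filtration stabilizes, so that $C=C_N$ for some $N\in\mathbb{N}$. In view of the displayed identity this reads $C=\wedge^{N+1}C_0$.

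Now I would apply the preceding Lemma with $A=C_0$ and $n=N+1$: since $C$ is semiprime and $C=\wedge^{N+1}C_0$, the Lemma forces $C=C_0$. Finally, the coradical $C_0$ is by definition the (maximal) cosemisimple subcoalgebra of $C$, so $C=C_0$ is cosemisimple, that is, semisimple, as claimed.

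I do not expect any genuine obstacle here: the only ingredient beyond the Lemma is the wedge-power description of the coradical filtration, which is standard. The one point worth stating carefully is the reading of \emph{finite coradical filtration}, namely that the filtration terminates, i.e. $C_N=C$ for some $N$; this is exactly what is needed to exhibit $C$ as a finite iterated wedge of $C_0$ and thus to feed it into the Lemma.
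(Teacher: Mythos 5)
Your proof is correct and is essentially the paper's own argument: the paper likewise notes that finiteness of the coradical filtration gives $C=\wedge^{n}\,\Soc C$ for some $n$ and then invokes the preceding Lemma to conclude $C=\Soc C$. You have merely spelled out the standard identification $C_n=\wedge^{n+1}C_0$ that the paper leaves implicit, which is a fine (and arguably clearer) presentation of the same reasoning.
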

\begin{proof}
By hypothesis, $C=\wedge^n \Soc C$ for some $n\in \mathbb{N}$. Thus $C=\Soc C$ and we are done.
\end{proof}

Observe that semiprime coalgebras are not necessarily indecomposable,
unlike it happens with prime coalgebras. For instance,
a direct sum of simple coalgebras is semiprime non-prime.
The following proposition states that the class of semiprime
coalgebras is closed under direct sums. Hence, from the point
of view of the representation theory,  semiprime coalgebras
seem to be a more appropriate class than merely prime coalgebras.

\begin{proposition}
The direct sum of semiprime coalgebras is semiprime as well.
\end{proposition}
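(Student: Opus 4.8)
The plan is to show that if $C=\bigoplus_{\lambda\in\Lambda} C_\lambda$ is a direct sum of semiprime coalgebras and $C=A\wedge^C A$ for some subcoalgebra $A$, then necessarily $A=C$. The natural strategy is to reduce the wedge computation inside $C$ to wedge computations inside each summand $C_\lambda$, and then invoke that each $C_\lambda$ is semiprime. The key structural fact I would use is that in a direct sum of coalgebras $C=\bigoplus_\lambda C_\lambda$, each $C_\lambda$ is a subcoalgebra and $\Delta(C_\lambda)\subseteq C_\lambda\otimes C_\lambda$; moreover the comultiplication has no ``cross terms'' between distinct summands. Consequently any subcoalgebra $A$ decomposes compatibly as $A=\bigoplus_\lambda A_\lambda$ with $A_\lambda=A\cap C_\lambda$ a subcoalgebra of $C_\lambda$.

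First I would establish that decomposition of $A$. Since the $C_\lambda$ are coalgebra direct summands, a simple right $C$-comodule is in fact a comodule over exactly one $C_\lambda$; applying this to the socle of $A$ and using that $A$ is spanned by its coefficient coalgebra, one gets $A=\bigoplus_\lambda(A\cap C_\lambda)$, with each $A\cap C_\lambda$ a subcoalgebra of $C_\lambda$. The second step is the crucial wedge identity: I would verify that
\begin{equation}
A\wedge^C A=\bigoplus_{\lambda\in\Lambda}\left(A_\lambda\wedge^{C_\lambda}A_\lambda\right),
\end{equation}
where $A_\lambda=A\cap C_\lambda$. This follows from the definition $A\wedge^C A=\{x\in C:\Delta(x)\in A\otimes C+C\otimes A\}$ together with the orthogonality of the summands under $\Delta$: an element $x=\sum_\lambda x_\lambda$ lies in $A\wedge^C A$ exactly when each component $x_\lambda$ satisfies $\Delta(x_\lambda)\in A_\lambda\otimes C_\lambda+C_\lambda\otimes A_\lambda$, because comultiplication respects the block decomposition and there is no mixing between blocks.

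Once the identity is in hand, the proof concludes quickly. From $C=A\wedge^C A$ and the identity above we obtain $\bigoplus_\lambda C_\lambda=\bigoplus_\lambda(A_\lambda\wedge^{C_\lambda}A_\lambda)$, and comparing summands gives $C_\lambda=A_\lambda\wedge^{C_\lambda}A_\lambda$ for every $\lambda$. Since each $C_\lambda$ is semiprime, this forces $A_\lambda=C_\lambda$ for all $\lambda$, whence $A=\bigoplus_\lambda A_\lambda=\bigoplus_\lambda C_\lambda=C$, as required. One should also note that, by the earlier Lemma reducing $\wedge^n A$ to $A\wedge A$, it suffices to treat the case $C=A\wedge A$, so no additional bookkeeping for higher wedge powers is needed.

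The main obstacle I anticipate is justifying the wedge identity rigorously, in particular the claim that the block-diagonal structure of $\Delta$ forces an element of $A\wedge^C A$ to have each homogeneous component separately in $A_\lambda\wedge^{C_\lambda}A_\lambda$. The subtlety is that $A\otimes C+C\otimes A$ is a sum of two subspaces and membership in it is not a priori ``componentwise''; one must use that $\Delta(x_\lambda)\in C_\lambda\otimes C_\lambda$ together with the projection onto the $(\lambda,\lambda)$-block to strip off the cross terms and reduce to $A_\lambda\otimes C_\lambda+C_\lambda\otimes A_\lambda$. Handling this projection argument carefully, and confirming that $A_\lambda=A\cap C_\lambda$ is genuinely the relevant subcoalgebra of $C_\lambda$, is where the real content lies; the rest is formal.
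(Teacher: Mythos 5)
Your proposal is correct and follows essentially the same route as the paper: decompose the subcoalgebra $A$ as $\bigoplus_\lambda A_\lambda$ with $A_\lambda = A\cap C_\lambda$, reduce the wedge $A\wedge^C A$ blockwise to $\bigoplus_\lambda\bigl(A_\lambda\wedge^{C_\lambda}A_\lambda\bigr)$, and apply semiprimeness of each summand. The paper's proof is just a terser version of this argument, asserting without detail the two structural facts (the decomposition of $A$ and the blockwise wedge identity) that you correctly identify as the real content and justify via the projection argument.
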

\begin{proof}
Let $C=\oplus_{i\in I}D_i$, where $D_i$ is semiprime for any $i\in I$. Let $A$ a subcoalgebra of $C$ such that $C=A\wedge^C A$. Then $A=\oplus_{i\in I}A_i$, where $A_i$ is a subcoalgebra of $D_i$ for any $i\in I$. Moreover, $A_i\neq 0$  and $D_i=A_i\wedge^{D_i}A_i$ for any $i\in I$. Thus $D_i=A_i$ for any $i\in I$ and hence $A=C$.
\end{proof}

\subsection{Localization in semiprime coalgebras}

The theory of localization has turned out to be a nice tool for
developing the representation theory of coalgebras, see
for instance \cite{jmn2}, \cite{navarro} or \cite{simson07} for some
notions and results. Therefore one might ask oneself about apply
that theory to our problems concerning semiprime coalgebras. Let us
first to characterize semiprime coalgebras by means of its local
structure following the spirit of \cite[Theorem 4.3]{jmnr}.
For any $i\in I_C$,
we denote by $e_i$ the primitive idempotent in $C^*$
which corresponds to the simple $C$-comodule $S_i$.

\begin{theorem}\label{semiprime}
Let $C$ be an arbitrary coalgebra. $C$ is semiprime
if and only if the  coalgebra $eCe$ is semiprime
for any idempotent $e\in C^*$ which is the sum of two primitive orthogonal idempotents.
\end{theorem}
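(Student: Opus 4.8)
The plan is to prove both implications separately, using the machinery of wedge products and the correspondence between idempotents and simple comodules. The forward direction (semiprime $C$ implies $eCe$ semiprime for every such idempotent $e$) is the substantive one; the converse should be a short argument by contradiction using a single "bad" pair of simples.

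\medskip

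\emph{The converse first, since it is easier.} Suppose $C$ is not semiprime, so there is a proper subcoalgebra $A \subsetneq C$ with $C = A \wedge^C A$. I would locate a witness to $A \neq C$ at the level of the second piece of the coradical filtration: since $A \neq C$, either $A$ omits some simple comodule of $C$, or $A$ and $C$ share the same simples but $A_1 \subsetneq C_1$. The second case is the generic one, and by Theorem \ref{strongly} (applied with $B = A$, i.e. Corollary \ref{fullquiver}) the failure must show up as an arrow $\xymatrix{T \ar[r] & S}$ in $Q_C$ that is not inherited from $A \wedge^C A = A$; equivalently, by Proposition \ref{Gabrielquiver}, there are two simples $S_i, S_j$ with $(S_i \wedge^C S_j)/(S_i+S_j) \neq 0$. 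I would then take $e = e_i + e_j$, the sum of the two corresponding primitive orthogonal idempotents, and show that $eCe$ is \emph{not} semiprime: the localized coalgebra $eCe$ retains exactly the arrow data between $S_i$ and $S_j$, so the same splitting $eCe = (eAe) \wedge^{eCe} (eAe)$ with $eAe \subsetneq eCe$ persists. This contraposition gives the converse.

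\medskip

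\emph{For the forward direction}, assume $C$ is semiprime and fix $e = e_i + e_j$. Suppose toward a contradiction that $eCe$ is not semiprime, so there is a proper subcoalgebra $B \subsetneq eCe$ with $eCe = B \wedge^{eCe} B$. The key idea is to lift $B$ to a subcoalgebra of $C$ and produce a nontrivial wedge decomposition of $C$ itself, contradicting semiprimeness. The natural candidate is to pull $B$ back along the localization, combining it with the "complementary" part of $C$ supported away from $\{i,j\}$. Concretely, I would consider the subcoalgebra $\widehat{A} \subseteq C$ whose local data at $\{i,j\}$ is governed by $B$ and which agrees with $C$ elsewhere, and check using Proposition \ref{simplearrows}(b) — which computes $S \wedge^{A\wedge B} T$ locally in each of the four cases — that the wedge operation in $C$ restricted to the $\{i,j\}$-block coincides with the wedge operation in $eCe$. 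This compatibility is precisely what lets the decomposition $eCe = B \wedge B$ descend to a proper decomposition $C = \widehat{A} \wedge^C \widehat{A}$.

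\medskip

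\emph{The hard part} will be making the lifting $B \rightsquigarrow \widehat{A}$ precise and verifying that it is genuinely a subcoalgebra of $C$ with the expected second coradical layer, rather than merely a subcoalgebra of $eCe$. The delicate point is that semiprimeness is detected by the full coradical filtration, not just by $Q_C$, so I must argue that a \emph{proper} wedge identity in $eCe$ forces a proper wedge identity in $C$ — this is where I expect the ``subtle mistake in \cite{jmnr}'' alluded to in the introduction to be relevant. I would handle this by reducing, via the Lemma above (semiprime $\iff$ the only way $C=\wedge^n A$ is $C=A$), to controlling iterated wedges locally: since localization $C \mapsto eCe$ is exact and the functors $\Cohom_C(Ce,-)$ and $-\square_{eCe} Ce$ are compatible with the wedge operation on subcoalgebras, the equality $\wedge^n_{eCe}(eAe) = eCe$ should be equivalent to $e(\wedge^n_C A)e = eCe$ for the lifted $A$. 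Establishing this exactness/compatibility of localization with wedges is the crux; once it is in hand, both directions follow by transporting the defining property of semiprimeness across the local-global correspondence.
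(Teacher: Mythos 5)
Your proof of the contrapositive direction ($C$ not semiprime $\Rightarrow$ some $eCe$ not semiprime) rests on a false dichotomy: you claim that a proper subcoalgebra $A$ with $C=A\wedge^C A$ must be detected either by a missing simple or by $A_1\subsetneq C_1$, i.e.\ by arrow data. This is not true. Take $C$ to be the span of $\{x,a,a^2\}$ inside the path coalgebra of a single loop $a$, and $A=\mathrm{span}(x,a)=C_1$. Then $C=C_1\wedge^C C_0\subseteq A\wedge^C A$, so $C=A\wedge^C A$ with $A$ proper, yet $A$ contains the unique simple and all of $C_1$: the defect lives in the \emph{third} coradical layer. In fact your quiver criterion is vacuous: by Theorem \ref{strongly} with $B=A$ (i.e.\ Corollary \ref{fullquiver}), $Q_{A\wedge A}$ is \emph{always} the full valued subquiver of $Q_C$ on the simples of $A$ --- the quiver of a wedge never remembers $Q_A$ --- so when $C=A\wedge^C A$ there is never an arrow of $Q_C$ ``not inherited'' from the wedge, and Proposition \ref{Gabrielquiver} only certifies that $Q_C$ has \emph{some} arrow, which gives no control over the pair $(S_i,S_j)$ at which you localize. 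For a badly chosen pair one can perfectly well have $eAe=eCe$ (adjoin to the example above a disjoint prime block and pick both simples inside that block), and then $eCe=eAe\wedge^{eCe}eAe$ yields no contradiction at all. The paper's proof chooses the pair from an actual element, not from an arrow: take $x\in C\setminus A$, decompose its injective envelope as $E_1\oplus\cdots\oplus E_n$ to find $i$ with $xe_i\notin A$, apply the left-hand version to get $y=e_jye_i\notin A$, and set $e=e_i+e_j$; since $eAe\subseteq A$, one gets $y\in eCe\setminus eAe$ while $eCe=eAe\wedge^{eCe}eAe$, so $eCe$ is not semiprime. That element-level witness is exactly what your argument is missing and cannot be recovered from $Q_C$.

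For the other direction ($C$ semiprime $\Rightarrow$ every such $eCe$ semiprime), what you have written is a plan rather than a proof: the lifting $B\rightsquigarrow \widehat{A}$ and the compatibility of localization with (iterated) wedges are precisely the things that must be constructed and verified, and you explicitly leave them open as ``the hard part''. The paper handles this implication by invoking the argument of \cite[Theorem 4.2]{jmnr}, so you have correctly located where the remaining work lies, but you have not carried it out. Note finally that you have the relative difficulty of the two implications inverted: the paper's detailed work is in the direction you call ``easier'', and it is exactly there that the quiver-based shortcut breaks down.
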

\begin{proof}
Let us assume that $C=A\wedge^C A$ and $C\neq A$. Let $x\in C\backslash A$.
 Then (the right $C$-comodule generated by $x$) $\langle x \rangle$
 is finite dimensional. Assume that $E_1\oplus \cdots \oplus E_n$
 is its injective envelope, where $E_i$ are indecomposable injective
 right $C$-comodules. Hence, there exists $i\in\{1,\ldots , n\}$ such
that $xe_i\neq 0$ and $xe_i\notin A$. Indeed, if $x=\sum_{i=1}^n\lambda_i x_i$,
 where $x_i\in E_i$ and $\lambda_i$ are nonzero scalars, then $x_i=xe_i$ for any
 $i$ and, since $x\notin A$, there exists certain $i$ such that $x_i\notin A$.
 Applying the left version of the above procedure to such element $x_i$
we may find a nonzero element $y\in C$ such that $e_jye_i=y$ and $y\notin A$.
 Let us consider $e=e_i+e_j$, since $eCe=eAe\wedge eAe$ and also
$y\in eCe\backslash eAe$, $eCe$ is not semiprime. The converse may be proven
 as in \cite[Theorem 4.2]{jmnr}.
\end{proof}

By a similar reasoning we may mend, partially, a gap in the proof of
 \cite[Theorem 4.3]{jmnr}.
There, the authors attend to the localization techniques
in the context of prime subcoalgebras of path coalgebras. Nevertheless,
in its proof,  it is only proven  that $C=A\wedge B$ and $x\in C\backslash A$ yields $x\in B$. Then, obviously,
the proof is not completed. In the following proposition we prove a version of
\cite[Theorem 4.3]{jmnr} for arbitrary coalgebras. A coalgebra is said to be socle-finite
\cite{simson07} whether its socle is finite dimensional, equivalently, whether the set of
pairwise non-isomorphic simple comodules is finite.

\begin{proposition}\label{prime}
An arbitrary coalgebra is
 prime if and only any socle-finite ``localized'' coalgebra is prime.
\end{proposition}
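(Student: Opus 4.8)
The plan is to prove both implications of Proposition \ref{prime}, following the blueprint of Theorem \ref{semiprime} but adapted to the stronger primeness condition (where we must forbid \emph{any} nontrivial decomposition $C = A \wedge^C B$, not merely the diagonal case $A \wedge^C A$). First I would settle notation: a localized coalgebra means $eCe$ for an idempotent $e \in C^*$, and socle-finiteness of $eCe$ corresponds to $e$ being a sum of finitely many primitive orthogonal idempotents $e = e_{i_1} + \cdots + e_{i_n}$.

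For the forward direction I would argue by contraposition, exactly mirroring the first paragraph of the proof of Theorem \ref{semiprime}. Suppose $C$ is \emph{not} prime, so $C = A \wedge^C B$ with $C \neq A$ and $C \neq B$. Pick $x \in C \setminus A$; since $\langle x \rangle$ is finite dimensional with injective envelope $E_1 \oplus \cdots \oplus E_n$, there is a primitive idempotent $e_i$ with $xe_i \neq 0$ and $xe_i \notin A$. Applying the left-sided version of the same argument to $xe_i$ produces a nonzero $y = e_j y e_i \notin A$. Symmetrically, because $C \neq B$ as well, one produces an idempotent witnessing $y' \notin B$; the key point I would need to verify is that one can assemble a single \emph{finite} idempotent $e$ (a sum of the relevant primitive orthogonal idempotents) such that $eCe = eAe \wedge^{eCe} eBe$ is a genuinely nontrivial wedge decomposition, using that localization is compatible with the wedge product in the sense made precise by Proposition \ref{simplearrows}(b) and its comodule-level consequences. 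Thus $eCe$ fails to be prime, and it is socle-finite by construction.

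For the converse I would again use contraposition: assume some socle-finite localized coalgebra $eCe$ is not prime, say $eCe = P \wedge^{eCe} R$ nontrivially, and lift this to a nontrivial decomposition of $C$. Here I would invoke the standard correspondence between subcoalgebras of $eCe$ and suitable subcoalgebras of $C$ arising from the localization, together with the wedge-compatibility recorded above, to produce subcoalgebras $A, B$ of $C$ with $C = A \wedge^C B$, $A \neq C$, $B \neq C$. This is the direction where the gap in \cite[Theorem 4.3]{jmnr} lived, so I would be careful to check that \emph{both} factors remain proper after pulling back, rather than only verifying $x \in B$ as was done there.

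The hard part will be the converse, specifically the lifting step. Localization only sees the comodules supported on the chosen vertices, so a nontrivial wedge decomposition of $eCe$ need not obviously globalize: the subcoalgebras $P$ and $R$ control the coradical structure only over the localized support, and one must rule out that the ``missing'' simple comodules force $A$ or $B$ to equal all of $C$ when pulled back. I expect the resolution to require examining the valued Gabriel quiver via Theorem \ref{strongly} and Corollary \ref{fullquiver}, translating primeness into a connectivity/irreducibility statement about $Q_C$ so that a local obstruction at two suitably chosen vertices certifies a global one; this quiver-theoretic reformulation is what repairs the original argument and is the technical crux of the proposition.
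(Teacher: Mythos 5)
Your first direction reproduces the paper's proof essentially verbatim: from $C=A\wedge^C B$ with both factors proper, one compresses witnesses $x\notin A$ and $y\notin B$ to nonzero elements of the form $e_ixe_j$ and $e_kye_l$ (via the injective-envelope argument of Theorem \ref{semiprime}), sets $e=e_i+e_j+e_k+e_l$, and obtains the nontrivial decomposition $eCe=eAe\wedge^{eCe}eBe$ of a socle-finite localized coalgebra. The compatibility you flag as needing verification is immediate and does not come from Proposition \ref{simplearrows}(b): applying $e(-)e$ to $\Delta(x)\in A\otimes C+C\otimes B$ gives $eCe=e(A\wedge^C B)e\subseteq eAe\wedge^{eCe}eBe$, while $eAe\subseteq A$ and $eBe\subseteq B$ keep both factors proper. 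Note also that you have the history backwards: the gap in \cite[Theorem 4.3]{jmnr} described in the paper (``it is only proven that $C=A\wedge B$ and $x\in C\setminus A$ yields $x\in B$'') concerns an argument starting from a decomposition of $C$ itself, so it sits in \emph{this} direction, and the four-idempotent compression is precisely its repair.

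The genuine gap is in your converse (``$eCe$ not prime $\Rightarrow$ $C$ not prime''). This is the direction the paper does not write out (it is the analogue of the converse of Theorem \ref{semiprime}, which is deferred to \cite[Theorem 4.2]{jmnr}), and your proposed route through the valued Gabriel quiver cannot work, because primeness is not a quiver invariant. Remark \ref{counter} is an explicit counterexample: the coalgebra $R=\langle x,\{a^n\}_{n>0},\{b^m\}_{m>0}\rangle$ and the full path coalgebra $kQ$ (where $Q$ is a single vertex with two loops) have the same strongly connected Gabriel quiver, yet $kQ$ is prime (hereditary with strongly connected quiver, Theorem \ref{main}) while $R$ is not. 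Strong connectivity certifies primeness only for hereditary coalgebras, and a localized coalgebra $eCe$ need not be hereditary, so no ``connectivity/irreducibility statement about $Q_C$'' can carry out the lifting, and Theorem \ref{strongly} and Corollary \ref{fullquiver} are of no help here. What does work is the dual-algebra corner argument: if $eCe=P\wedge^{eCe}R$ nontrivially, then $P^{\perp}R^{\perp}=0$ inside $(eCe)^*\cong eC^*e$; since $P^{\perp}$ and $R^{\perp}$ are ideals of $eC^*e$ consisting of elements fixed by left and right multiplication by $e$, one gets $P^{\perp}C^*R^{\perp}=P^{\perp}(eC^*e)R^{\perp}=0$, so the closed ideals of $C^*$ generated by $P^{\perp}$ and $R^{\perp}$ have zero product, and their orthogonals are proper subcoalgebras $A,B$ of $C$ with $A\wedge^C B=(A^{\perp}B^{\perp})^{\perp}=C$. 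This is the coalgebraic version of the fact that a corner $eRe$ of a prime ring $R$ is prime; no quiver combinatorics enters.
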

\begin{proof}
Let $C$ be an arbitrary coalgebra such that any ``localized'' coalgebra $eCe$ is prime.
Assume that $C=A\wedge^C B$ with $C\neq A$ and $C\neq B$. Then there exist nonzero $x\in C/A$
and $y\in C/B$. Proceeding as in Theorem \ref{semiprime}, there exist $i,j,k,l\in I_C$ such that
$e_ixe_j=x$ and $e_kye_l=y$. Let $e=e_i+e_j+e_k+e_l$. Then $eCe=eAe\wedge eBe$ with $eCe\neq eAe$
and $eCe\neq eBe$. Hence $eCe$ is not prime, a contradiction.
\end{proof}

Then, following the former proof, we should only consider ``localized''
 coalgebras with at most four non-isomorphic
simple comodules.

\subsection{Shape of the valued Gabriel quiver}

\begin{theorem}\label{main} The following assertions hold:
\begin{enumerate}[$i)$]
\item Let $C$ be a semiprime coalgebra, then each connected component of the right (left) valued Gabriel quiver of $C$ is strongly connected.
\item Let $C$ be a hereditary coalgebra whose right (left) valued Gabriel quiver is strongly connected, then $C$ is prime.
\end{enumerate}
As a consequence, any hereditary semiprime coalgebra is the direct sum of prime coalgebras
\end{theorem}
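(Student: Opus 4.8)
The plan is to establish parts $i)$ and $ii)$ separately and then read off the final assertion from them together with the decomposition of a coalgebra into the blocks attached to the connected components of its Gabriel quiver. The common device is to pass to the dual algebra through the identity $A\wedge^C B=(A^\perp B^\perp)^\perp$ recalled in the introduction: then $C=A\wedge^C B$ says precisely $A^\perp B^\perp=0$ for the corresponding closed ideals of $C^*$, so that $C$ is semiprime iff $C^*$ has no nonzero closed ideal of square zero, and prime iff $C^*$ has no two nonzero closed ideals with zero product. The combinatorics of Theorem \ref{strongly} (no arrows run between the two ``outer'' vertex sets of a wedge decomposition) is exactly the comodule shadow of these ideal computations.

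For $i)$ I argue by contraposition, reducing to two vertices via Theorem \ref{semiprime}. If a connected component of $Q_C$ is not strongly connected, its poset of strongly connected components is a nontrivial DAG, so there are simples $S_i,S_j$ in the component with a directed path from $i$ to $j$ but none from $j$ back to $i$. Localising at $e=e_i+e_j$, the cell/localisation description shows that $Q_{eCe}$ has an arrow $i\to j$ but no arrow $j\to i$, so no path from $j$ to $i$ exists in the two-vertex quiver. Writing $f_i,f_j$ for the primitive idempotents of $(eCe)^*$, this reachability fact gives $f_i(eCe)^*f_j=0$, whence the corner $I:=f_j(eCe)^*f_i$ absorbs multiplication on both sides and is a closed two-sided ideal; since $f_if_j=0$ one gets $I^2=f_j(eCe)^*(f_if_j)(eCe)^*f_i=0$, while the arrow $i\to j$ forces $I\neq0$. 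Hence $eCe$ is not semiprime, and by Theorem \ref{semiprime} neither is $C$. The left-hand statement follows because the left and right valued Gabriel quivers are opposite (as used in Proposition \ref{Gabrielquiver}) and strong connectedness is invariant under reversing all arrows. The step needing care — the main obstacle of this part — is the dictionary ``reachability in the quiver versus vanishing of corners of the dual algebra'', i.e. that $f_i(eCe)^*f_j$ is spanned by classes of directed paths from $j$ to $i$; the reduction to two vertices is exactly what makes this transparent.

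For $ii)$ I exploit hereditariness to identify $C^*$ with a completed path (tensor) algebra carrying no relations: by the structure theorem for hereditary coalgebras with separable coradical cited before the section, $C\cong T_{C_0}(C_1/C_0)$, so $C^*$ is the completion of the tensor algebra of $C_1/C_0$ over the semisimple $C_0$. I then show this algebra is prime exactly when $Q_C$ is strongly connected. Given two nonzero closed ideals and nonzero elements $a,b$ inside them, let $a_{d_0},b_{e_0}$ be their lowest-degree homogeneous parts for the length grading; strong connectedness supplies a path $r$ from the terminus of a path $q$ in $\mathrm{supp}(b_{e_0})$ to the source of a path $p$ in $\mathrm{supp}(a_{d_0})$, and in the product $arb$ the homogeneous component of lowest degree is $a_{d_0}rb_{e_0}$, in which the path $prq$ occurs with coefficient the product of the two nonzero coefficients and cannot be cancelled, since the fixed length of $r$ recovers its position and hence the factorisation. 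Thus $arb\neq0$, the product of the two ideals is nonzero, and $C^*$ — hence $C$ — is prime. The main obstacle here is the non-pointed, valued bookkeeping (paths now carry coefficients in the division algebras sitting at the blocks of $C_0$), together with the requirement that ideals be taken closed; the leading-term argument survives but must be run over $C_0$ rather than over $k$.

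Finally, the concluding assertion is immediate. If $C$ is hereditary and semiprime, then by $i)$ every connected component of $Q_C$ is strongly connected. Decomposing $C=\bigoplus_\lambda C_\lambda$ along these components — the standard block decomposition, in which $Q_{C_\lambda}$ is the $\lambda$-th component — each summand $C_\lambda$ is a coalgebra direct summand of the hereditary coalgebra $C$, hence hereditary, and has strongly connected Gabriel quiver; so $C_\lambda$ is prime by $ii)$. Therefore $C$ is a direct sum of prime coalgebras, as claimed.
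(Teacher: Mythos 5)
Your architecture for part $i)$ (reduce to a two-vertex localization via Theorem \ref{semiprime} and exhibit a square-zero closed ideal in the dual algebra) is the same as the paper's, but your key dictionary is false for general coalgebras, and this is a genuine gap. The corner $f_j(eCe)^*f_i$ is not ``spanned by classes of directed paths from $j$ to $i$''; it is $\Hom_{eCe}(\overline{E}_j,\overline{E}_i)\cong\Hom_C(E_j,E_i)$, and for a non-hereditary coalgebra the existence of a directed path from $i$ to $j$ in $Q_C$ does not imply this space is nonzero. Concretely, let $C$ be the coalgebra dual to $k(1\to 2\to 3)/(\mathrm{rad}^2)$: there is a path from $1$ to $3$, but localizing at $e=e_1+e_3$ kills the two arrows, $eCe$ is cosemisimple and $\Hom_C$ between the relevant injectives vanishes; your ideal $I$ is zero and no contradiction is reached, even though $Q_C$ is not strongly connected there. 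Only one direction of the dictionary holds: a nonzero corner forces $S$ to be an $n$-predecessor and hence yields a path (this is the result of \cite{navarro} that the paper invokes). The fix is small: since the condensation of a connected, non-strongly-connected component is a nontrivial DAG, you may choose an actual \emph{arrow} $i\to j$ joining two distinct strongly connected components; then $\Ext^1_C(S_i,S_j)\neq 0$ gives $\Hom_C(E_j,E_i)\neq 0$, which survives localization and makes $I\neq 0$, while the absence of a return path correctly gives the vanishing of the opposite corner. With that substitution, your square-zero computation becomes exactly the paper's step ``$D=A\wedge A$ with $A=e_iDe_i\oplus e_{i+1}De_{i+1}$''.

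Part $ii)$ is where you genuinely diverge from the paper, and there is a hypothesis gap: the identification $C\cong T_{C_0}(C_1/C_0)$ from \cite{formallysmooth} requires the coradical to be separable (coseparable), which is not assumed in the theorem; note that the paper itself adds ``separable coradical'' precisely in the corollary where it invokes that structure theorem. Over a non-perfect field a hereditary coalgebra need not be a cotensor coalgebra, so your leading-term argument in the completed tensor algebra does not cover all hereditary $C$. The paper instead argues intrinsically: for hereditary $C$ any nonzero morphism between indecomposable injectives is surjective; this settles the colocal case by a counit contradiction, and the general case is reduced to a colocal one by composing surjections supplied by strong connectedness and then localizing at a single vertex $e_i$. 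Your leading-term argument is essentially correct for \emph{basic} hereditary coalgebras with coseparable coradical (basicness is what makes the element-wise nonvanishing of tensors over the division algebras $G_i$ work), so it proves a correct but strictly weaker version of $ii)$. The concluding block decomposition along connected components is handled the same way in both proofs.
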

\begin{proof}
\begin{enumerate}[$i)$]
\item Let $S_i$ and $S_j$ be two simple right $C$-comodules in the same connected component of $Q_C$. Then there exists a minimal (non-oriented) path
$$\xymatrix{S_1=S_i \ar@{-}[r] & S_2  \ar@{-}[r] & \cdots \ar@{-}[r]  & S_{n-1} \ar@{-}[r]  & S_j=S_n}$$ in $Q_C$, where $\xymatrix{S_i \ar@{-}[r] & S_{i+1}}$ represents $\xymatrix{S_i \ar[r] & S_{i+1}}$ or $\xymatrix{S_i  & S_{i+1} \ar[l]}$. Let us prove that, for any $i=1,\ldots n-1$, there exists a path in $Q_C$ from $S_i$ to $S_{i+1}$.  Indeed, if there is an arrow $\xymatrix{S_i \ar[r] & S_{i+1}}$, we are done. If not, there exists an arrow $\xymatrix{S_i  & S_{i+1} \ar[l]}$ and then $S_{i+1}$ is a predecessor of $S_i$, i.e., $\Rad_C(E_i,E_{i+1})=\Hom_C(E_i,E_{i+1})\neq 0$.

We consider the idempotent $e\in C^*$ associated to the simple comodules $S_i$
and $S_{i+1}$, and its ``localized'' coalgebra $D=eCe$.
Then $\Hom_{D}(\overline{E}_i,\overline{E}_{i+1})\cong \Hom_C(E_i,E_{i+1})\neq 0$, where $\{\overline{E}_i\}_{i\in I_{D}}$ is a complete set of pairwise non-isomorphic indecomposable injective $D$-comodules. If
$\Hom_{D}(\overline{E}_{i+1},\overline{E}_i)=0$, there is no arrow in $Q_{D}$ from $S_i$ to $S_{i+1}$. So $Q_{D}$ is a subquiver of $$\xymatrix@R=50pt{ S_{i+1} \ar[r] \ar@(ur,ul)[] & S_i \ar@(ur,ul)[]}.$$ It is not difficult to see that $D=A\wedge A$, where $A=e_iDe_i\oplus e_{i+1}De_{i+1}$ (see the proof of
\cite[Theorem 6.2]{GTN}), and therefore $D$ is not semiprime and contradicts Theorem \ref{semiprime}. Hence $0\neq \Hom_{D}(\overline{E}_{i+1},\overline{E}_i)\cong \Hom_C(E_{i+1},E_{i})$ and $S_i$ is a predecessor of $S_{i+1}$. By \cite[Theorem 1.9]{navarro}, there exists a path in $Q_C$ from $S_i$ to $S_{i+1}$.
\item Let us first suppose that $C$ is colocal. Then $C$ is the unique indecomposable injective right (or left) $C$-comodule. Furthermore, any non-zero morphism of right (or left) $C$-comodules $f:C\rightarrow C$ is surjective, since $C$ is hereditary. Assume that $C=A\wedge^C B$, where $A$ and $B$ are proper subcoalgebras of $C$.
     Hence $0\neq C/A\cong C^{(\delta)}$ and $0\neq C/B\cong C^{(\lambda)}$ and therefore there exist a non-zero morphism of right $C$-comodules $f:C\rightarrow C$ such that $f(A)=0$ and a non-zero morphism of left $C$-comodules $g:C\rightarrow C$ such that $g(B)=0$. Now, since $\Delta_C(x)\in A\otimes C+C\otimes B$,
    $$(\Delta\circ f)(x)=(f\circ I)\Delta(x)\in C\otimes B$$ and then, for any $x\in \Ima f=C$, $\Delta(x)\in C\otimes B$. Applying the same reasoning to $g$,
$$(\Delta\circ g)(x)=(I\circ g)\Delta(x)=0$$ for any $x\in C$, and we get a contradiction.

Let now $C$ be a hereditary coalgebra where $Q_C$ is strongly connected.
 If $C$ is not prime, $C=A\wedge^C B$ with $C\neq A$ and $C\neq B$.
 We denote by $E_i^C$, $E_i^A$ and $E_i^B$ the indecomposable injective $C$-comodule, $A$-comodule and $B$-comodule, respectively, with simple socle $S_i$.
Since $C\neq A$, there exists certain indecomposable injective right
 $C$-comodule $E^C_i$ such that $E_i^A\subsetneqq E_i^C$.
Then it may be found a morphism between indecomposable injective
 right $C$-comodules $f:E_i^C\rightarrow E_j^C$ such that $f(E_i^A)=0$.
 Analogously, there exists $g:E_k^C\rightarrow E_t^C$ morphism
 of right $C$-comodules between indecomposable injective
 $C$-comodules such that $g(E_k^B)=0$.

By hypothesis, $Q_C$ is strongly connected so there exists a path from $S_j$ to $S_k$ and a path from $S_t$ to $S_i$. Then, by \cite{navarro}, $\Hom_C(E_t^C,E_i^C)\neq 0$ and $\Hom_C(E_j^C,E_k^C)\neq 0$. So let $h:E_j^C\rightarrow E_k^C$ and $p:E_t^C\rightarrow E_i^C$ be non-zero surjective morphisms of right $C$-comodules. We may consider the surjective morphism $T=pghf\in \End_C(E_i^C)$. It verifies that
$$T(E^A_i)=pgh(f(E^A_i))=0 \qquad \text{and} \qquad T(E^B_i)\subseteq pgh(E^B_j)\subseteq pg(E_k^B)=0$$
Hence, if $e_i$ is the primitive idempotent associated to $E_i^C$, $0\neq e_iT\in \End_{e_iCe_i}(e_iCe_i)$ with $e_iT(e_iAe_i)=e_iT(e_iBe_i)=0$. Thus $e_iAe_i$ and $e_iBe_i$ are proper subcoalgebras of $e_iCe_i$ and $e_iCe_i=e_iAe_i\wedge^{e_iCe_i}e_iBe_i$. Therefore, $e_iCe_i$ is colocal, hereditary and non-prime, a contradiction.
\end{enumerate}
In order to prove the consequence, if $C$ is a hereditary semiprime coalgebra, by $i)$, each connected component of $Q_C$ is strongly connected. By \cite{simson07}, each connected component corresponds to the valued Gabriel quiver of an indecomposable direct sum of $C$. Then these summands are hereditary with strongly connected valued Gabriel quiver. By $ii)$, they are prime.
\end{proof}

We recall from \cite{nttstrictly} that
a right $C$-comodule $M$ is said to be strictly
 quasi-finite
  if every quotient of $M$ is quasi-finite.

\begin{corollary}[Weak Eisenbud-Griffith Theorem for coalgebras]
\label{eisenbud}
Any hereditary semiprime strictly quasifinite coalgebra is serial.
\end{corollary}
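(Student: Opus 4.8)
The plan is to strip the semiprime hypothesis off via Theorem \ref{main} and then reduce to a combinatorial statement about the valued Gabriel quiver of a \emph{prime} summand. First I would invoke the consequence of Theorem \ref{main}: a hereditary semiprime coalgebra $C$ splits as a (block) direct sum of coalgebras $C=\bigoplus_\lambda C_\lambda$ with each $C_\lambda$ prime and again hereditary. Seriality is detected blockwise, since $C$ is serial exactly when every indecomposable injective left and right $C$-comodule is uniserial and these are distributed among the blocks; so it suffices to treat each $C_\lambda$. Strict quasi-finiteness also descends to blocks: a quotient of $C_\lambda$, extended by zero, is a quotient of $C$ and hence quasi-finite, so each $C_\lambda$ is hereditary, prime and strictly quasi-finite. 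This reduces the statement to: \emph{a hereditary prime strictly quasi-finite coalgebra is serial}.

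Next, since a prime coalgebra is indecomposable, $Q_{C_\lambda}$ is connected, and by Theorem \ref{main}$(i)$ it is strongly connected; in particular every vertex lies on an oriented cycle. The heart of the argument is to show that strict quasi-finiteness forbids branching along such cycles. I claim that if some vertex $v$ on a cycle had in-valuation at least two (two predecessor arrows, or a single arrow whose first label is $\geq 2$), then $E_v$ would have a quotient that is not quasi-finite. Using that $C_\lambda$ is hereditary, $E_v$ is, in its coradical grading, the comodule supported on the in-tree of $v$, and strong connectivity produces an infinite branch through $v$. Branching at $v$ then yields, at each return to $v$ along that branch, a sibling path $p\notin N$ whose immediate predecessor already lies in a chosen downward-closed set $N$ cut out by the branch; these siblings become new socle generators of $E_v/N$, and by finiteness of the arrows into $v$ infinitely many share a single type $S_j$. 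Hence $\dim_k\Hom_{C_\lambda}(S_j,E_v/N)=\infty$, so $E_v/N$ is not quasi-finite, contradicting that every quotient of $C_\lambda$ must be. Therefore every vertex of $Q_{C_\lambda}$ has in-valuation exactly $(1,1)$.

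Finally I would read off the shape of the quiver. A strongly connected valued quiver in which every vertex has a unique incoming arrow, labelled $(1,1)$, is necessarily a single \emph{finite} oriented cycle $\mathbb{Z}_n$: the rule sending each vertex to its unique predecessor is a bijection whose orbit structure, being connected and free of transient tails, is one cycle (the two-sided infinite line has in-degree one but is not strongly connected, and a single vertex with no arrows only occurs for a simple coalgebra). This forces the out-valuations to be $(1,1)$ as well, so each indecomposable injective right comodule $E_i$, and likewise each indecomposable injective left comodule over the opposite cycle, is uniserial; thus $C_\lambda$ is serial, and so is $C=\bigoplus_\lambda C_\lambda$.

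The main obstacle is the middle step, translating strict quasi-finiteness into the absence of in-branching along cycles, because the naive invariants are useless here: $\dim_k\Hom_{C_\lambda}(E_i,E_j)$ is already infinite for the honest cyclic coalgebra $k\mathbb{Z}_n$ (one may wrap around the cycle arbitrarily often), so the obstruction cannot be the size of these $\Hom$-spaces. The correct bound must come from the growth of the socle layers $\Soc^{n+1}E_v/\Soc^{n}E_v$ of $E_v$, which stay bounded in the uniserial case but grow without bound as soon as a cycle-vertex branches; it is exactly this growth that strict quasi-finiteness controls, and making the quotient construction above precise (or citing the corresponding characterization of \cite{nttstrictly} and \cite{GTN}) is where the real work lies.
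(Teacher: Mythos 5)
Your first step coincides with the paper's: the paper likewise invokes the final assertion of Theorem \ref{main} to split $C$ into prime, hereditary direct summands, and uses (implicitly) that strict quasi-finiteness and seriality are blockwise notions, which you verify more carefully than the paper does. The divergence is in the prime case: the paper finishes in one line by citing \cite[Theorem 5.2]{GTN}, which is exactly the statement you set out to re-prove, namely that a hereditary prime strictly quasi-finite coalgebra is serial. Your re-derivation has the right skeleton: strong connectivity from Theorem \ref{main}$(i)$; the observation that if a vertex $v$ lies on an oriented cycle $c$ (such a cycle exists by strong connectivity) and admits a second arrow $\delta$ into it, then the subcomodule $N\subseteq E_v$ spanned by the terminal segments of the powers $c^n$ gives a quotient $E_v/N$ whose socle contains the classes of the infinitely many paths $c^n\delta$, all isomorphic to a single simple, contradicting quasi-finiteness of quotients; and the correct combinatorial fact that a strongly connected quiver in which every vertex has a unique incoming arrow valued $(1,1)$ is a finite oriented cycle, over which a hereditary coalgebra is serial. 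The paper's route buys brevity and full rigour at the price of a black-box citation; yours buys a self-contained explanation of \emph{why} strict quasi-finiteness forces the cycle shape --- in effect you are reconstructing the proof of the theorem the paper cites.

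Two points in your middle step are genuinely incomplete, as you yourself concede. First, the sibling-path construction is carried out in path-coalgebra language, i.e.\ for pointed $C$; a general hereditary coalgebra is only a cotensor coalgebra $T_{C_0}(C_1/C_0)$ over a semisimple, not necessarily pointed, coradical, so ``paths'' must be replaced by cotensor monomials over division rings, and it is precisely in that formalism that your remaining case --- a single incoming arrow whose first label is $\geq 2$ --- has to be handled. Second, your argument only bounds the first label $d^1$ of incoming arrows, since by Proposition \ref{Gabrielquiver} it is $d^1$ that measures multiplicities in the \emph{right} socle layers; it says nothing about $d^2$, and a cycle all of whose arrows were valued $(1,2)$ would have uniserial right injectives without being serial. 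Hence the clause ``likewise each indecomposable injective left comodule'' is not a consequence of what you proved: one must run the mirrored argument using \emph{left} strict quasi-finiteness, which is how the hypothesis is meant in \cite{nttstrictly} and \cite{GTN}. Both gaps are fixable, but note that your own proposed fallback --- citing the characterization in \cite{GTN} --- collapses your proof into the paper's.
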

\begin{proof}
Let $C$ be a coalgebra as stated above. By the former theorem,
$C$ is a direct sum of prime coalgebras.
Then each direct summand is prime hereditary
 and strictly quasi-finite.
By \cite[Theorem 5.2]{GTN},
$C$ is a direct sum of serial coalgebras.
Then $C$ is serial.
\end{proof}

\begin{corollary} Let $C$ be a indecomposable pointed coalgebra. The Gabriel quiver of $C$ is strongly connected
if and only if $C$ is an admissible subcoalgebra of a prime path coalgebra.
\end{corollary}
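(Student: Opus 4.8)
The plan is to prove the biconditional by treating each direction through the structural results already established, in particular Theorem \ref{main}$(ii)$, Corollary \ref{fullquiver}, and the known correspondence between pointed coalgebras and admissible subcoalgebras of path coalgebras. First I would recall that for an indecomposable pointed coalgebra $C$ over $k$, there is an injection $C \hookrightarrow kQ_C$ as an admissible subcoalgebra (one containing the subspace spanned by vertices and arrows, i.e.\ $C_1 = (kQ_C)_1$), where $Q_C$ is the Gabriel quiver of $C$; this is the standard pointed-coalgebra embedding, and being admissible is precisely the condition that matches the coradical filtration. The vertices of $Q_C$ are the grouplike elements (the simple $C$-comodules), and strong connectedness of $Q_C$ is the property I must relate to primeness of the ambient path coalgebra.

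For the backward direction, suppose $C$ is an admissible subcoalgebra of a prime path coalgebra $kQ$. Since $C$ is admissible, $C$ and $kQ$ share the same Gabriel quiver, so $Q_C = Q$ as valued quivers. A path coalgebra $kQ$ is hereditary, and by \cite[Theorem 4.2]{jmnr} (pointed prime path coalgebras) a path coalgebra is prime exactly when its quiver is strongly connected; alternatively, invoking the contrapositive of Theorem \ref{main}$(i)$ applied to the hereditary prime coalgebra $kQ$, primeness forces each connected component to be strongly connected, and since $C$ (hence $kQ$) is indecomposable there is a single connected component which must therefore be strongly connected. Thus $Q_C = Q$ is strongly connected.

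For the forward direction, assume $Q_C$ is strongly connected. I would embed $C$ as an admissible subcoalgebra of $kQ_C$ as above, and it remains to show $kQ_C$ is prime. Since $Q_C$ is strongly connected, the path coalgebra $kQ_C$ is hereditary with strongly connected valued Gabriel quiver (the Gabriel quiver of $kQ_C$ equals $Q_C$ by admissibility), so Theorem \ref{main}$(ii)$ applies directly and yields that $kQ_C$ is prime. Hence $C$ is an admissible subcoalgebra of the prime path coalgebra $kQ_C$, completing this implication.

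The main obstacle I anticipate is the careful verification that ``admissible'' is the exact hypothesis making $Q_C$ coincide with the Gabriel quiver of the ambient path coalgebra, and that this embedding exists for every indecomposable pointed $C$ — i.e.\ pinning down the precise definition of admissible so that both directions use it consistently. A secondary point is confirming that Theorem \ref{main}$(ii)$ is applicable to $kQ_C$: one needs $kQ_C$ hereditary (true for path coalgebras) and its \emph{own} valued Gabriel quiver strongly connected, which follows from admissibility identifying it with $Q_C$. Once these identifications are clean, both implications reduce to straightforward appeals to Theorem \ref{main} and the pointed-coalgebra embedding, with no serious computation required.
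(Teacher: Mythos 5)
Your proposal is correct and follows essentially the same route as the paper: both use the Woodcock embedding of a pointed coalgebra as an admissible subcoalgebra of a path coalgebra (so that $Q_C=Q$), then apply Theorem \ref{main}$(ii)$ for the forward direction and Theorem \ref{main}$(i)$ together with indecomposability (connectedness of $Q_C$) for the backward direction. Your write-up merely spells out the two directions more explicitly than the paper's one-line appeal to Theorem \ref{main}.
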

\begin{proof}
By \cite{woodcock}, any pointed coalgebra $C$ is isomorphic to
 an admissible subcoalgebra of a path coalgebra $kQ$, i.e., $C$ contains
the set of all the arrows and all the vertices. In particular, $Q_C=Q$. Therefore,
by Theorem \ref{main}, $kQ$ is prime if and only $Q=Q_C$ is strongly connected.
\end{proof}

\begin{remark}\label{counter} Let us remark that, unlike it happens with hereditary coalgebras, there exist
indecomposable semiprime non-prime coalgebras. For instance, let us consider the quiver $Q$ formed
by a single vertex $x$ and two different loops $a$ and $b$ starting and ending at $x$. Now, let
$A$ be the coalgebra generated by $\{x, \{a^n\}_{n>0}\}$ and $B$ be the
 coalgebra generated by $\{x, \{b^n\}_{n>0}\}$ as vector spaces. Both are isomorphic to
the polynomial coalgebra and then prime. Let $R=\{x,\{a^n\}_{n>0},\{b^m\}_{m>0}\}$.
This is a non-prime coalgebra, since $R=A\wedge^R B$.
 Nevertheless, it is semiprime. Indeed, if $R=D\wedge^RD$,
then $\Delta(d)\in D\otimes R+R\otimes D$. Now, for any $n\in \mathbb{N}$,
$\Delta(a^{2n})=a^n\otimes a^n+\text{other terms}$. Then $a^n\in D$. Analogously,
$b^n\in D$. Then $D=R$.

Actually, in this way, we may find an infinite family of non-prime semiprime coalgebras.
Simply, consider $T_n$ the subcoalgebra of $kQ$ generated by all
 paths of length lower or equal than $n$ and
the set $\{a^m,b^m\}_{m>n}$.

Observe as well that this example gives
 a negative answer to the conjecture stated in \cite{nek}, since $R$ is colocal, cocommutative
  and infinite dimensional, and, in despite of this, it is not prime.

\end{remark}

\section{Applications to representation theory}\label{secrep}

Let us finish the paper dealing with its main aim, i.e.,
study the representation theory of a semiprime coalgebra.
Firstly, we show that the assumption of being a basic coalgebra is not a very restrictive condition
since primeness and semiprimeness are preserved under Morita-Takeuchi equivalence. We remind the reader
that, following \cite{chin} or \cite{simson1}, any coalgebra is Morita-Takeuchi
 equivalent to a basic one.

\begin{lemma} Let $C$ and $D$ be two coalgebras which are Morita-Takeuchi equivalent. Then
$C$ is prime (semiprime) if and only if $D$ is prime (semiprime).
\end{lemma}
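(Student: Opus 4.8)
The plan is to exploit the fact that Morita--Takeuchi equivalence is realized by an equivalence of comodule categories $F:\M^C\to\M^D$, and to show that both primeness and semiprimeness are expressible entirely in terms of the lattice of subcoalgebras and the wedge operation, which in turn is recoverable from the categorical data. The cleanest route is to observe that subcoalgebras of $C$ correspond bijectively to closed subcategories (dense subcategories closed under subquotients and injective envelopes, equivalently certain ``full'' localizing-type subcategories) of $\M^C$, and that this correspondence is order-preserving and sends the wedge product to a categorically defined operation. Under the equivalence $F$, these closed subcategories of $\M^C$ are carried to the corresponding ones of $\M^D$, so the whole poset of subcoalgebras together with the wedge is transported intact.

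First I would recall that $F$ preserves and reflects simple comodules, injective envelopes, and short exact sequences, hence it preserves the socle filtration and all the $\Ext^1$-data. In particular $F$ induces an isomorphism of the valued Gabriel quivers $Q_C\cong Q_D$. Next I would set up the dictionary between subcoalgebras $A\subseteq C$ and the corresponding subcategory $\M^A\hookrightarrow\M^C$ of comodules whose coefficient coalgebra lies in $A$ (equivalently, comodules $M$ with $\rho_M(M)\subseteq M\otimes A$). The key technical point is to verify that $A\subseteq B$ iff $\M^A\subseteq\M^B$, and that the wedge $A\wedge^C B$ corresponds to the subcategory generated by extensions of $\M^B$-comodules by $\M^A$-comodules; this is exactly the content implicit in the proof of Proposition \ref{simplearrows}, where the wedge is characterized through non-split extensions $0\to S\to M\to T\to 0$. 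Granting this, $F$ restricts to an equivalence $\M^A\simeq\M^{A'}$ for a uniquely determined subcoalgebra $A'\subseteq D$, and the wedge is preserved: $(A\wedge^C B)' = A'\wedge^D B'$.

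Once the transport of the wedge is established, the conclusion is immediate. If $D$ is semiprime and $C=A\wedge^C A$ for a proper subcoalgebra $A$, then $D=A'\wedge^D A'$ with $A'\subsetneq D$ (properness is preserved since $F$ reflects equalities of subcategories), contradicting semiprimeness of $D$; hence $C$ is semiprime, and by symmetry the converse holds. The identical argument with $A\wedge^C B$ in place of $A\wedge^C A$ settles the prime case. I expect the main obstacle to be the careful verification that the categorical reformulation of the wedge is correct and equivalence-invariant; rather than defining the wedge abstractly, the safest approach is to phrase it via the injective cogenerators. Concretely, $A\wedge^C B$ is determined by the quotient $C/A$ and the cotensor characterization $(I\wedge^C E)/I\cong C/I\square_C E$ used in Proposition \ref{Gabrielquiver}, and since $F$ commutes with the relevant $\Cohom$ and cotensor functors (it is induced by a quasi-finite injective cogenerator, see \cite{takeuchi}), the wedge is preserved. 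Tracking these functorial compatibilities through the equivalence is the only delicate step; the rest is formal.
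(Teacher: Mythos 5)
Your strategy is genuinely different from the paper's proof. The paper argues in two steps: for socle-finite coalgebras, a Morita--Takeuchi equivalence is a strong equivalence by \cite{lin}, so $C^*$ and $D^*$ are Morita equivalent algebras, and since a coalgebra is (semi)prime exactly when its dual algebra is, this case follows from Morita invariance of (semi)primeness of algebras; the general case is then reduced to the socle-finite one by matching idempotents $e\in C^*$ and $e'\in D^*$ under the equivalence and invoking the localization criteria of Theorem \ref{semiprime} and Proposition \ref{prime}. Your proposal bypasses dual algebras and localization entirely, transporting the lattice of subcoalgebras together with the wedge through the equivalence $F\colon \M^C\to\M^D$. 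That route is viable and, once its ingredients are correctly stated, more self-contained and arguably more conceptual: it needs neither \cite{lin} nor the machinery of Section 3, and it makes transparent \emph{why} the statement is equivalence-invariant.

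However, two of your supporting claims need repair before the argument is complete. First, your description of the subcategories corresponding to subcoalgebras is wrong as written: $\M^A$ is \emph{not} a dense (Serre) subcategory of $\M^C$ --- failure of closure under extensions is precisely what the wedge measures --- and it is not closed under injective envelopes either (the $\M^C$-injective envelope of a simple comodule in $\M^{C_0}$ lies in $\M^{C_0}$ only when that simple is $C$-injective). What your argument actually requires is the theorem, due to N\u{a}st\u{a}sescu and Torrecillas, that $A\mapsto\M^A$ is a bijection between subcoalgebras of $C$ and the full subcategories closed under subobjects, quotient objects and arbitrary direct sums; this, applied to $D$, is what lets you write $F(\M^A)=\M^{A'}$ for a uniquely determined subcoalgebra $A'\subseteq D$, and without it that key step is unjustified. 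Second, the identification of $\M^{A\wedge^C B}$ with an extension category is not contained in Proposition \ref{simplearrows}, which concerns only simple comodules; you need the general classical fact (see \cite{sweedler}) that $M\in\M^{A\wedge^C B}$ if and only if $M$ has a subcomodule $N\in\M^A$ with $M/N\in\M^B$. With these two facts correctly invoked, your transport argument --- including preservation of properness and of equalities of the form $C=A\wedge^C A$ or $C=A\wedge^C B$ --- does go through and proves the lemma; the cotensor/$\Cohom$ compatibility you gesture at in the last paragraph is then unnecessary.
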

\begin{proof}
Suppose first that $D$ and $C$ are socle-finite coalgebras. Then,
by \cite{lin}, $C$ and $D$ are strongly equivalent and then $C^*$ and $D^*$
are Morita equivalent. Thus $C^*$ is prime (semiprime) if and only if $D^*$ is prime
(semiprime) and the statement
is proved. For an arbitrary dimension of the socle, let $e$ be an idempotent in $C^*$ corresponding to a socle-finite
injective $C$-comodule $E$. Let $E'$ the injective $D$-comodule image under
 the Morita-Takeuchi
equivalence and $e'\in D^*$ an idempotent associated to $E'$.
Then $eCe$ and $e'De'$ are socle-finite and Morita-Takeuchi equivalent.
By the arguments given above, $eCe$ is prime (semiprime) if and only if $e'De'$ is
prime (semiprime). Hence, by Theorem \ref{semiprime} and Proposition \ref{prime},
$C$ is prime (semiprime) if and only if $D$ is prime (semiprime).
\end{proof}

It is clear that simple subcoalgebras of $C$ are prime and,
 because of this, it is commonly said that prime coalgebras
 are a generalization of simple coalgebras.
Nevertheless, a non-simple prime (resp. semiprime) coalgebra is quite
 far from being a simple (resp. semisimple) one.
 For instance, they are infinite dimensional or, moreover,
 have an infinite coradical filtration.
Let us now show that the category of comodules of a semiprime coalgebra lacks
certain finiteness conditions. A coalgebra is said to be left (right) semiperfect if
all indecomposable injective right (left) $C$-comodules are finite dimensional.

\begin{proposition}\label{semiperfect}
Any semiprime left (right) semiperfect coalgebra is semisimple.
\end{proposition}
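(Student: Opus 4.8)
The plan is to argue by contradiction, assuming $C$ is semiprime and left semiperfect but not semisimple, and to localize at a well-chosen finite set of simple comodules so as to land in the hypothesis of the earlier corollary asserting that a semiprime coalgebra with finite coradical filtration is semisimple. First I would note that, since $C$ is not semisimple and $C=\bigcup_{n}C_n$ with $C_n=\wedge^{n+1}C_0$, the coradical filtration cannot stabilise at $C_0=\Soc C$; indeed $C_1=C_0$ would force $C_n=C_{n-1}\wedge^C C_0=C_0$ for all $n$ and hence $C=C_0$. Thus $C_1\neq C_0$, so some $\Ext^1_C(S_a,S_b)\neq 0$ and, by the very definition of the valued Gabriel quiver, $Q_C$ carries at least one arrow. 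Equivalently, there are simple right $C$-comodules $S_a$ and $S_b$ (possibly isomorphic) and a non-split short exact sequence $0\to S_b\to M\to S_a\to 0$ of right $C$-comodules.

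Next I would pass to a localization. Suppose first that $C$ has at least two non-isomorphic simple comodules, and choose primitive orthogonal idempotents $e_a\neq e_b$ in $C^*$ whose associated simples contain both composition factors of $M$; if the arrow is a loop at $S_a$, one simply takes $e_b$ to be any other primitive idempotent. Put $e=e_a+e_b$ and $D=eCe$. Since $C$ is left semiperfect, the indecomposable injective right comodules $E_a$ and $E_b$ are finite dimensional, and as $\overline{E}_a=eE_a$ and $\overline{E}_b=eE_b$ are the indecomposable injective right $D$-comodules, the coalgebra $D=\overline{E}_a\oplus\overline{E}_b$ is finite dimensional, whence it has finite coradical filtration. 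By Theorem \ref{semiprime}, the semiprimeness of $C$ forces $D=eCe$ to be semiprime, $e$ being a sum of two primitive orthogonal idempotents. Consequently, by the corollary quoted above, $D$ is semisimple.

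The contradiction then comes from the surviving extension: the localization functor $M\mapsto eM$ recalled in the introduction is exact and restricts to an isomorphism on comodules all of whose composition factors lie in $\{S_a,S_b\}\subseteq X_e$, so applying it to $0\to S_b\to M\to S_a\to 0$ produces a non-split extension of $D$-comodules, giving $Q_D\neq\emptyset$ and so $D$ not semisimple, against the previous paragraph. This settles every case except that in which $C$ is colocal; but then $E_a=C$ is the unique indecomposable injective right $C$-comodule, finite dimensional by left semiperfectness, so $C$ itself has finite coradical filtration and the earlier corollary gives that $C$ is semisimple directly. The right semiperfect statement follows by the same reasoning applied to the left valued Gabriel quiver. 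I expect the main technical point to be this last one, namely verifying that the chosen finite-dimensional extension is not destroyed under $M\mapsto eM$, i.e. that this functor fixes comodules supported on $\{S_a,S_b\}$ and preserves non-splitness, which is exactly where the explicit description of the localization functor is needed.
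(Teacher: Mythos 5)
Your proof is correct and follows essentially the same route as the paper's: the paper likewise localizes at two simples (it takes a non-simple indecomposable injective $E_i$ and a simple $S_j\subseteq E_i/S_i$, i.e.\ exactly your non-split extension), uses Theorem \ref{semiprime} plus the finite-coradical-filtration corollary to conclude that the finite-dimensional coalgebra $eCe$ is semisimple, and gets the contradiction from exactness of the quotient functor. The technical point you flag, that $M\mapsto eM$ preserves non-splitness of the extension, is settled precisely by the fact the paper cites from \cite{navarro}: $eE_b$ is the indecomposable injective $eCe$-comodule with socle $S_b$, so applying the exact functor to $M\hookrightarrow E_b$ shows $eM$ still has simple socle and hence does not split.
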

\begin{proof}
Assume, contrary to our thesis, that $C$ is not semisimple. Then there exists an
non-simple indecomposable injective right $C$-comodule $E_i$. Therefore,
$E_i/S_i\neq 0$, and there exists certain simple $C$-comodule $S_j\subseteq E_i/S_i$.
We consider the injective right $C$-comodule $E=E_i\oplus E_j$, and the ``localized''
coalgebra $D$ given by this injective comodule. Then $D$ is a socle-finite
left semiperfect, i.e., it is finite dimensional. Since $D$ is semiprime, $D$ is semisimple.
Nevertheless, the quotient functor $T:\M^C\rightarrow \M^D$ is exact, and then
$S_j\subseteq T(E_i)/S_i$, where $T(E_i)$ is the indecomposable injective $D$-comodule whose
socle is $S_i$, see \cite{navarro}. Thus $T(E_i)$ is not simple, and we get a contradiction.
\end{proof}

We may go further  and prove that any Hom-computable semiprime coalgebra is
semisimple. We remind the reader that,
 following \cite{simson07b}, a coalgebra is said
 to be right Hom-computable if the vector space $\Hom_C(E,E')$
 is finite dimensional for any pair of indecomposable
 injective right $C$-comodules $E$ and $E'$.
Clearly, any left semiperfect coalgebra is right
 Hom-computable.

\begin{proposition}
Any semiprime right (left) Hom-computable coalgebra is semisimple.
\end{proposition}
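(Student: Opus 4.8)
The plan is to follow the pattern of Proposition \ref{semiperfect}, modifying only the step in which the finite-dimensionality of the localized coalgebra is obtained. Arguing for the right-handed statement (the left one being symmetric), I would assume toward a contradiction that $C$ is not semisimple. Then there is a non-simple indecomposable injective right $C$-comodule $E_i$, so that $E_i/S_i\neq 0$ contains a simple comodule $S_j$. Localizing at the injective comodule $E=E_i\oplus E_j$ produces $D=eCe$ with $e=e_i+e_j$, a sum of two primitive orthogonal idempotents; thus $D$ is socle-finite, its simple comodules lie among $\{S_i,S_j\}$, and by Theorem \ref{semiprime} the coalgebra $D$ is again semiprime.

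The only genuinely new ingredient is to replace the clause \emph{socle-finite left semiperfect implies finite dimensional} by \emph{socle-finite and right Hom-computable implies finite dimensional}. Here I would use that the localization identifies $\Hom_D(\overline{E}_a,\overline{E}_b)\cong\Hom_C(E_a,E_b)$ for the indecomposable injective $D$-comodules $\overline{E}_a=T(E_a)$ with $a,b\in\{i,j\}$, so that each of these $\Hom$-spaces is finite dimensional because $C$ is right Hom-computable. Since $\overline{E}_b$ is the injective envelope of $S_b$ and the functor $\Hom_D(-,\overline{E}_b)$ is exact, the dimension $\dim_k\Hom_D(\overline{E}_a,\overline{E}_b)$ measures the multiplicity of $S_b$ as a composition factor of $\overline{E}_a$; hence every composition factor of $\overline{E}_a$ occurs only finitely often. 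As $D$ has only finitely many isomorphism classes of simple comodules, each $\overline{E}_a$ has finite length and so is finite dimensional, whence $D$ is finite dimensional.

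With $D$ now finite dimensional and semiprime, its coradical filtration is finite, so $D$ is semisimple by the corollary on semiprime coalgebras with finite coradical filtration. On the other hand, the quotient functor $T:\M^C\rightarrow\M^D$ is exact, so applying it to the inclusion $S_i\hookrightarrow E_i$ yields $S_j\subseteq T(E_i)/S_i=\overline{E}_i/S_i$, where $\overline{E}_i$ is the indecomposable injective $D$-comodule with socle $S_i$; thus $\overline{E}_i$ is not simple, contradicting the semisimplicity of $D$. The main obstacle is precisely the finite-dimensionality step: one must justify that right Hom-computability bounds the multiplicities of the composition factors of the injective $\overline{E}_a$, after which the argument of Proposition \ref{semiperfect} applies verbatim.
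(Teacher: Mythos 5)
Your proof is correct and follows the paper's overall strategy exactly: reduce to the argument of Proposition \ref{semiperfect} by showing that the localized coalgebra $D=eCe$ at $E=E_i\oplus E_j$ is finite dimensional, conclude that $D$ is semisimple (being semiprime with finite coradical filtration, via Theorem \ref{semiprime} and the corollary), and derive the contradiction from exactness of the quotient functor. The only divergence is how finite-dimensionality of $D$ is obtained. You do it by a length count: the isomorphisms $\Hom_D(\overline{E}_a,\overline{E}_b)\cong\Hom_C(E_a,E_b)$, together with exactness of $\Hom_D(-,\overline{E}_b)$, bound the multiplicity of each of the two simples in each $\overline{E}_a$, so each $\overline{E}_a$ has finite length and $D=\overline{E}_i\oplus\overline{E}_j$ is finite dimensional. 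The paper gets the same conclusion in one line from the algebra isomorphism $(eCe)^*\cong\End_{eCe}(eCe)\cong\Hom_C(E,E)$: the dual algebra of $D$ is the endomorphism algebra of the socle-finite injective $E$, which is finite dimensional by Hom-computability, hence $D$ itself is finite dimensional. Both routes are valid; the paper's is shorter, while yours needs one point made explicit, namely that for the possibly infinite-dimensional comodule $\overline{E}_a$ the finiteness of $\dim_k\Hom_D(\overline{E}_a,\overline{E}_b)$ really does bound the multiplicity of $S_b$ — this follows because the restriction maps $\Hom_D(\overline{E}_a,\overline{E}_b)\rightarrow\Hom_D(N,\overline{E}_b)$ are surjective (by injectivity of $\overline{E}_b$) for every finite-dimensional subcomodule $N\subseteq\overline{E}_a$, and on finite-dimensional $N$ the $G_b$-dimension of $\Hom_D(N,\overline{E}_b)$ equals the multiplicity of $S_b$ in $N$.
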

\begin{proof}
Following the proof of Proposition \ref{semiperfect}, it is enough to prove that
any ``localized'' coalgebra relative to an injective $E=E_i\oplus E_j$, where $E_i$ and $E_j$
are indecomposable, is
semisimple. So let $C$ be semiprime right Hom-computable coalgebra and
$E=E_i\oplus E_j$ be an injective right $C$-comodule. Then $(eCe)^*\cong
\Hom_{eCe}(eCe)\cong \Hom_C(E,E) $ is finite dimensional, where $E=Ce$. Then $eCe$ is semisimple.
\end{proof}

It's well-known that the class of all finite dimensional algebras
 over an algebraically field is
divided into two disjoint classes. This is called the tame-wild dichotomy, see for example
\cite{simsonbluebook}, \cite{simsonskowron2} or \cite{simsonskowron3}. On the one side, the
class of tame algebras whose  finitely generated indecomposable modules of a fixed dimension
can be recovered by a finite number of one-parameter families. On the other side, the class
of wild algebras which verify that a classification of its
 finitely generated indecomposable modules yields such classification of any finite dimensional
algebra. Hence, a complete description is feasible for tame algebras only.

When working on coalgebras, we may define tameness and
 wildness following the same spirit of above
\cite{simson1} \cite{simson2}. Nevertheless, the tame-wild dichotomy is still
an open problem. These definitions are slightly changed in order to treat the category
of finitely cogenerated comodules, see
 \cite{simson08} and \cite{simson09}. This is quite natural since indecomposable injective
comodules are commonly infinite dimensional and then they are not considered following
 the classical notions. We denote by  $\M^C_{fc}$ the category of finitely copresented
right $C$-comodules.

 Throughout $k$ will be an algebraically closed field. Then,
 the coalgebra
$C$ is Morita-Takeuchi equivalent to an admissible
 subcoalgebra of the path coalgebra of its Gabriel quiver, see \cite{woodcock}.
 Hence we shall assume
that $C\subseteq kQ$ for certain quiver $Q$ and $kQ_1\subseteq C$.
We recall from \cite{simson08} and \cite{simson09} that,
 for any finitely copresented $C$-comodule $N$ with a minimal injective copresentation
$$\xymatrix{0 \ar[r] & N \ar[r] & E_0 \ar[r]^-{g} & E_1,}$$ the \emph{coordinate vector} of $N$ is the bipartite vector $\mathbf{cdn}(N)=(v_0 | v_1)\in K_0(C)\times K_0(C)=\mathbb{Z}^{(I_C)}\times \mathbb{Z}^{(I_C)}$, where $v_0=\mathbf{lgth}(\Soc E_0)$ and $v_1=\mathbf{lgth}(\Soc E_1)$. We also recall that a $C$-$k[t]_h$-bicomodule $L$ is said to be finitely copresented if there is a $C$-$k[t]_h$-bicomodule exact sequence
$$\xymatrix{0 \ar[r] & L \ar[r] &k[t]_h\otimes E'  \ar[r]^-{g} &  k[t]_h\otimes E''}$$
such that $E'$ and $E''$ are socle-finite injective
 $C$-comodules. Here $h(t)$ is a non-zero polynomial in $k[t]$ and $k[t]_h=k[t,h(t)^{-1}]$
is a rational algebra.

 A $k$-coalgebra $C$
 is said to be of \emph{fc-tame comodule type} (resp. \emph{$k$-tame comodule type}) (cf.
 \cite{simson1} and \cite{simson08})
if for every bipartite vector $v=(v_0|v_1)\in K_0(C)\times K_0(C)$ (resp. length vector
$v\in K_0(C)$)
there exist $k[t]_h$-$C$-bimodules $L^{(1)},\ldots , L^{(r_v)}$,
which are finitely copresented (resp. finitely generated free $k[t]_h$-modules), such that all but finitely
many indecomposable right $C$-comodules $N$ in $\M^C_{fc}$ (resp. $\M^C_{f}$)
with $\mathbf{cdn} M=v$ (resp. $\underline{\rm{length}} M=v$) are of the form
 $M\cong k^1_\lambda\otimes_{k[t]}
L^{(s)}$, where $s\leq r_v$, $k^1_\lambda=k[t]/(t-\lambda)$ and
$\lambda \in k$.

We recall from \cite{simson1} that a finite
dimensional coalgebra $C$ is $k$-tame if and only if its dual algebra $C^*$ is $k$-tame.

A $k$-coalgebra $C$ is of
\emph{fc-wild comodule type} (resp. \emph{$k$-wild comodule type}) if
the category $\M^C_{fc}$ (resp. $\M^C_{f}$) is of wild type, that is,
if there exists an exact
and faithful $k$-linear functor $F: \M^f_{kQ} \rightarrow \M^C_{fc}$ (resp.
$F: \M^f_{kQ} \rightarrow \M^C_{f}$)
that respects isomorphism classes and carries indecomposable right
$kQ$-modules to indecomposable right $C$-comodules, where $Q$ is the quiver
$ \xymatrix{ \circ \ar@<0.6ex>[r]
\ar@<-0.4ex>[r] \ar@<0.1ex>[r] & \circ }$ and $\M^f_{kQ}$ is the category of finitely
generated right $kQ$-modules.

It is easy to see that if $C$ contains a $k$-wild subcoalgebra, then $C$ is $k$-wild.

\begin{lemma}\cite[Proposition 5.1(b)]{simson09}\label{fc-tame}
Let $C$ be an arbitrary coalgebra.
$C$ is fc-tame if and only if any socle-finite ``localized''
coalgebra of $C$ is fc-tame.
\end{lemma}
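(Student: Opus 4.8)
The plan is to exploit the exact quotient functor $T=\Cohom_C(E,-)\colon \M^C \to \M^{eCe}$ and its fully faithful right adjoint section functor $S=-\square_{eCe} E$, together with the observation that the coordinate vector of a finitely copresented comodule pins down the socle-finite injective comodules occurring in its minimal injective copresentation. Throughout, for an idempotent $e\in C^*$ corresponding to a finite subset $J\subseteq I_C$ of simple comodules, write $D=eCe$ and $T(E_i)=\overline{E}_i$ for $i\in J$; recall from the localization theory and the proof of Theorem \ref{main} that $\Hom_C(E_i,E_j)\cong \Hom_D(\overline{E}_i,\overline{E}_j)$ whenever $i,j\in J$ (since each such $E_i$ is $\T$-closed, so $E_i\cong S(\overline{E}_i)$ and $S$ is fully faithful), and in particular $\End_C(E_i)\cong \End_D(\overline{E}_i)$.

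First I would set up the key reduction. Fix a bipartite vector $v=(v_0\mid v_1)\in K_0(C)\times K_0(C)$, let $J$ be the (finite) union of the supports of $v_0$ and $v_1$, and let $D=eCe$ be the associated socle-finite localization. A finitely copresented comodule $N$ with $\mathbf{cdn}(N)=v$ has a minimal injective copresentation $0\to N\to E_0\to E_1$ in which $E_0$ and $E_1$ are, up to isomorphism, the socle-finite injectives determined by $v_0$ and $v_1$, hence direct sums of the $E_i$ with $i\in J$. Consequently $N$ is recovered up to isomorphism from the connecting morphism $g\in \Hom_C(E_0,E_1)$ as $N=\Ker g$, and $N\cong N'$ precisely when the corresponding morphisms lie in the same orbit under $\mathrm{Aut}_C(E_0)\times \mathrm{Aut}_C(E_1)$. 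Because $E_0,E_1$ are supported on $J$, the $\Hom$- and $\End$-isomorphisms above identify this matrix problem verbatim with the classification of morphisms $\overline{g}\in \Hom_D(T(E_0),T(E_1))$ modulo $\mathrm{Aut}_D(T(E_0))\times \mathrm{Aut}_D(T(E_1))$, i.e.\ with the finitely copresented $D$-comodules $\overline{N}=\Ker\overline{g}=T(N)$ of coordinate vector $v$. Thus $N\mapsto T(N)$ and $M\mapsto S(M)$ are mutually inverse bijections between the isomorphism classes of indecomposables of coordinate vector $v$ in $\M^C_{fc}$ and in $\M^D_{fc}$.

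With this correspondence in hand both implications become transport of families. For necessity, if $C$ is fc-tame and $D=eCe$ is any socle-finite localization, then every coordinate vector $w$ for $D$ is supported on $J=I_D$ and lifts to a coordinate vector $v$ of $C$; applying $e\cdot=T$ to the finitely copresented $C$-$k[t]_h$-bicomodules $L^{(1)},\dots,L^{(r)}$ furnished by fc-tameness of $C$ yields finitely copresented $D$-$k[t]_h$-bicomodules $T(L^{(s)})$ which, through the bijection above, parametrize all but finitely many indecomposables of $\M^D_{fc}$ of coordinate vector $w$. For sufficiency, given $v$ I would choose $J$ and $D$ as in the reduction; fc-tameness of $D$ supplies families $\overline{L}^{(1)},\dots,\overline{L}^{(r)}$, which I would push back by $L^{(s)}=S(\overline{L}^{(s)})=\overline{L}^{(s)}\square_D E$.

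The main obstacle is verifying that these transported families are again finitely copresented bicomodules and that the clause ``all but finitely many $N$ are of the form $k^1_\lambda\otimes_{k[t]}L^{(s)}$'' survives the correspondence. For the section direction this rests on the fact that $S$ is left exact and sends socle-finite injective $D$-comodules to socle-finite injective $C$-comodules with the same socle, so that applying $S$ to a copresentation $0\to \overline{L}\to k[t]_h\otimes \overline{E}'\to k[t]_h\otimes \overline{E}''$ again yields a copresentation of $S(\overline{L})$ by socle-finite injectives; combined with $TS\cong \mathrm{id}_{\M^D}$ and the $k$-linearity of $T$ and $S$ (so that both commute with $-\otimes_{k[t]}k^1_\lambda$), one gets $T(k^1_\lambda\otimes_{k[t]}S(\overline{L}^{(s)}))\cong k^1_\lambda\otimes_{k[t]}\overline{L}^{(s)}$, whence the bijection transfers the ``all but finitely many'' statement in both directions. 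I expect the only genuinely delicate point to be the bookkeeping that the minimal injective copresentation of $N$ really involves only injectives indexed by $J$ once $J$ contains the supports of $v_0$ and $v_1$, which is exactly what the definition of the coordinate vector guarantees.
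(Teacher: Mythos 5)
First, a structural remark: the paper does not prove this lemma at all; it is imported verbatim from Simson \cite[Proposition 5.1(b)]{simson09}, so there is no in-paper argument to compare yours with, and your attempt has to stand on its own. Its skeleton is the natural one, and its core is correct: for a bipartite vector $v$ supported on a finite set $J$ and $D=eCe$ the corresponding localization, a finitely copresented $C$-comodule whose minimal copresentation is supported on $J$ is torsion-free and $\T$-closed, $T=e(-)$ is fully faithful on such comodules (via $\Hom_C(E_i,E_j)\cong\Hom_D(\overline{E}_i,\overline{E}_j)$ for $i,j\in J$), and this does produce the asserted bijection between indecomposables of coordinate vector $v$ over $C$ and over $D$. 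The genuine gaps are in the two steps where you transport the parametrizing bicomodules.

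In the ``only if'' direction you assert that $T(L^{(s)})$ is finitely copresented over $D$ by applying $T$ to a copresentation $0\to L^{(s)}\to k[t]_h\otimes E'\to k[t]_h\otimes E''$. This requires $T(E')$ and $T(E'')$ to be socle-finite injective $D$-comodules, but fc-tameness of $C$ gives you no control over $E'$, $E''$: their socles need not lie in $J$, and $T$ preserves neither injectivity nor socle-finiteness of such comodules; in general only the torsion-free injectives (those with socle supported on $J$) are guaranteed to be carried to injectives. A concrete counterexample: let $Q$ be the two-cycle $\alpha\colon 1\to 2$, $\beta\colon 2\to 1$, let $C=\mathrm{span}(e_1,e_2,\alpha,\beta,\beta\alpha)\subseteq kQ$ (an admissible subcoalgebra, since $\alpha\beta\notin C$), and $e=e_1$. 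Then $D=eCe=\mathrm{span}(e_1,\beta\alpha)$ is the two-dimensional coalgebra with one group-like and one primitive element, while $T(E_2)=eE_2=k\alpha\cong \overline{S}_1$ is the simple $D$-comodule, which is not injective. So ``apply $T$ to the copresentation'' does not by itself yield a copresentation.

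In the ``if'' direction, the justification that $S$ commutes with $-\otimes_{k[t]}k^1_\lambda$ ``by $k$-linearity'' is false: $-\otimes_{k[t]}k^1_\lambda$ is the cokernel of multiplication by $t-\lambda$, and $S$ is only left exact; the failure is measured by $R^1S$, which vanishes when the localization is perfect, and perfectness is not assumed. Your fallback identity $T\bigl(k^1_\lambda\otimes_{k[t]}S(\overline{L}^{(s)})\bigr)\cong k^1_\lambda\otimes_{k[t]}\overline{L}^{(s)}$ is correct (it uses only exactness of $T$ and $TS\cong \mathrm{id}$), but it does not finish the proof: from $T(N)\cong T\bigl(k^1_\lambda\otimes S(\overline{L}^{(s)})\bigr)$ you may conclude $N\cong k^1_\lambda\otimes S(\overline{L}^{(s)})$ only if the latter comodule is itself $\T$-closed, because $T$ reflects isomorphy between closed comodules only. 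What one actually gets (by a snake-lemma computation on the copresentation of $\overline{L}^{(s)}$) is that $k^1_\lambda\otimes S(\overline{L}^{(s)})$ is a torsion-free subcomodule of a comodule isomorphic to $N$ with torsion cokernel, i.e.\ $N$ is its closure; without perfectness there is no reason for equality, so the family $\{S(\overline{L}^{(s)})\}$ need not witness fc-tameness of $C$. Closing these two gaps is precisely the technical content of Simson's localizing-embedding machinery in \cite{simson07}, \cite{simson08}, which is presumably why the author cites the result rather than reproving it.
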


In general, both definitions of tameness and wildness are unrelated. Nevertheless, as pointed out in \cite{simson07b}, when
the category $\M^C_f$ is included in $\M^C_{fc}$,
  or equivalently,
  all the simple $C$-comodules are finitely copresented, or equivalently,
if $E/\Soc E$ is quasi-finite for any
indecomposable injective $C$-comodule $E$, fc-tameness implies
 $k$-tameness and
$k$-wildness implies fc-wildness. For instance, this holds if $C$ is right
 strictly quasi-finite \cite{nttstrictly}.
The reader should observe that if a simple $C$-comodule is not
finitely copresented, there is an infinite number of arrows ending at a
vertex of $Q$. Thus
$C$ is $k$-wild, since it contains the finite dimensional coalgebra $kQ_1$, where $Q$
is one of the following finite quivers (cf. \cite{simson1}, \cite{simson2}):
$$\begin{array}[r]{ccc}
\xymatrix{
& \\ \circ \ar@<0.6ex>[r]
\ar@<-0.4ex>[r] \ar@<0.1ex>[r] & \circ   } &
\qquad \qquad \xymatrix@R=2pt{ \circ \ar[rdd] &  \\
\circ \ar[rd] & \\
\circ \ar[r] & \circ \\
\circ \ar[ru] &  \\
\circ \ar[ruu] &  \\
 } &
\qquad \qquad \xymatrix{ \\ \circ \ar@(ul,dl)[] \ar@(ul,ur)[]  \ar@(ur,dr)[]  } \\
K_3 &\qquad \qquad K_5 & \qquad \qquad L_3 \end{array}$$

Therefore it seems reasonable to assume that $Q$ is locally finite, i.e., any vertex
of $Q$ has finite number of arrows starting or ending at it.

\begin{proposition} Let $C$ be a hereditary semiprime $k$-tame coalgebra.
 Then $C$ is serial.
\end{proposition}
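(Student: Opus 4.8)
The plan is to reduce the statement, via Theorem \ref{main}, to a purely quiver‑combinatorial question about the Gabriel quiver of a prime hereditary summand, and then to exclude every shape other than an oriented cycle by producing a $k$‑wild subcoalgebra. Since $k$ is algebraically closed and $C$ is basic, $C$ is pointed; as primeness and semiprimeness are preserved under Morita--Takeuchi equivalence, I may assume $C=kQ$ is the path coalgebra of its Gabriel quiver $Q=Q_C$ (recall that a pointed hereditary coalgebra over $k=\bar k$ coincides with $T_{C_0}(C_1/C_0)=kQ$). By Theorem \ref{main}, $C$ is the direct sum of prime hereditary coalgebras, one for each connected component of $Q$, and each such component is strongly connected; moreover each summand, being a direct summand of the $k$‑tame coalgebra $C$, is again $k$‑tame. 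Thus it suffices to show that a prime hereditary $k$‑tame coalgebra $kQ'$ with $Q'$ strongly connected is serial, and then to assemble the serial summands. Since a path coalgebra is serial exactly when every vertex of its quiver has in‑degree and out‑degree at most one, I must prove that a strongly connected $Q'$ for which $kQ'$ is $k$‑tame is an oriented cycle (a single vertex, possibly with one loop, being the degenerate case).

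First I would dispose of local finiteness. If some vertex of $Q'$ had infinitely many arrows entering or leaving it, then $kQ'_1$ would contain one of the finite $k$‑wild coalgebras $K_3$, $K_5$ or $L_3$ displayed above, and since a coalgebra containing a $k$‑wild subcoalgebra is $k$‑wild this would contradict $k$‑tameness; hence $Q'$ is locally finite. Suppose now, for contradiction, that $Q'$ is strongly connected and locally finite but is not an oriented cycle. Then some vertex $v$ has out‑degree at least two (the case of in‑degree at least two being symmetric, since the left and right valued Gabriel quivers of $C$ are opposite to one another). Using strong connectivity I choose two distinct oriented cycles $\gamma_1,\gamma_2$ through $v$ whose first arrows out of $v$ are distinct, and I pass to the subcoalgebra $kQ_1\subseteq kQ'$ attached to the subquiver $Q_1=\gamma_1\cup\gamma_2$ (a subquiver always determines a subcoalgebra, spanned by the paths lying inside it).

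The heart of the argument --- and the step I expect to be the main obstacle --- is to certify that $kQ_1$ is $k$‑wild. The model is the two‑loop coalgebra $kL_2=k\langle x,y\rangle$: its finite‑dimensional comodules are exactly the finite‑dimensional $k\langle x,y\rangle$‑modules on which $x,y$ act conilpotently, that is, the modules over the truncations $k\langle x,y\rangle/\mathfrak{m}^{N}$; already $k\langle x,y\rangle/\mathfrak{m}^{3}$ is of wild representation type (for two loops wildness first appears at this third layer, whereas three loops are wild already at radical square zero, their separated quiver being $K_3$), so $kL_2$ is $k$‑wild. To transport this into $kQ_1$ I would build an exact, faithful functor $\M^{kL_2}_f\to\M^{kQ_1}_f$ preserving indecomposables and isomorphism classes, by \emph{spreading} a representation $(M,x,y)$ around the two cycles: place a copy of $M$ at every vertex of $Q_1$, let all structural maps along the edges of $\gamma_1$ and $\gamma_2$ be the identity except at the arrows returning to $v$, where the two return journeys are prescribed to realise the operators $x$ and $y$. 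Conilpotency of $x,y$ guarantees that the resulting representation of $Q_1$ is nilpotent, hence a genuine finite‑dimensional $kQ_1$‑comodule, and distinct pairs yield non‑isomorphic indecomposables. The technical friction is precisely in arranging $\gamma_1,\gamma_2$ (which may share arcs) so that the two return maps are free independent operators; this is equivalently encoded by the fact that localising $kQ_1$ at the single vertex $v$ produces, via the cell description recalled in the introduction, a coalgebra with at least two loops at $v$.

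Granting the wildness of $kQ_1$, the containment $kQ_1\subseteq kQ'\subseteq C$ forces $C$ to be $k$‑wild, contradicting the hypothesis. Therefore every vertex of $Q'$ has in‑ and out‑degree at most one, so $Q'$ is an oriented cycle and $kQ'$ is serial. Running this over all connected components, $C$ is a direct sum of serial coalgebras and hence serial, which completes the proof. I would remark that this recovers, in the tame case and without the strict quasi‑finiteness hypothesis, the conclusion of the weak Eisenbud--Griffith theorem (Corollary \ref{eisenbud}).
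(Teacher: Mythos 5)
Your proof follows the paper's skeleton in its outer layers --- Theorem \ref{main} to decompose $C$ into prime hereditary summands with strongly connected Gabriel quivers, and the characterization of serial hereditary coalgebras from \cite{GTN} to finish --- but it replaces the paper's middle step by a hand-made wildness argument, and that replacement has a genuine gap, precisely at the point you yourself call ``the main obstacle.'' The paper's proof is essentially three citations: by \cite{justus}, a hereditary $k$-tame coalgebra has Gabriel quiver a disjoint union of Dynkin and Euclidean diagrams; strong connectivity (Theorem \ref{main}) then forces each connected component to be a single point or a cyclically oriented $\widetilde{\mathbb{A}}_n$; and \cite[Theorem 2.5]{GTN} gives seriality. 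You avoid \cite{justus}, so you must actually prove that a strongly connected quiver having a vertex $v$ of out-degree at least two carries a $k$-wild path coalgebra, and this you do not do.

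Concretely, your spreading construction (copies of $M$ at every vertex, identities everywhere except at ``the arrows returning to $v$'') fails whenever the two cycles re-enter $v$ through the same arrow: for $Q_1$ with vertices $v,w$, arrows $a,b\colon v\to w$ and $c\colon w\to v$, the cycles $ca$ and $cb$ share the unique return arrow $c$, so there is nothing to carry $x$ and $y$ separately. A repair exists (place $M\oplus M$ at $w$, with $a=\binom{\mathrm{id}}{0}$, $b=\binom{0}{\mathrm{id}}$ and $c=(x\ \ y)$), but then exactness, faithfulness, and preservation of indecomposables and isomorphism classes must be verified for each possible configuration of shared arcs --- those verifications are the actual content of a wildness proof, not a detail to be deferred. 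Your fallback, that localizing at $v$ yields at least two loops and hence wildness, silently invokes the transfer ``$eCe$ is $k$-wild $\Rightarrow$ $C$ is $k$-wild.'' No such result for $k$-wildness is proved by you or available in the paper: the only localization--tameness transfer at hand is Lemma \ref{fc-tame} (from \cite{simson09}), which concerns fc-tameness, and your hypothesis is $k$-tameness, which is not known to imply fc-tameness, so the mechanism used in the proof of Theorem \ref{biserial} cannot be borrowed here; moreover the section functor, the natural candidate for lifting a wild functor, is only left exact and need not preserve finite-dimensional comodules. So as written the argument does not close; completing it would amount to reproving, for these particular quivers, the wild half of the classification in \cite{justus} that the paper simply cites.
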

\begin{proof}
By \cite{justus}, $Q_C$ must be a disjoint union of extended Dynkin quivers.
Furthermore, by Theorem \ref{main},
each connected component of $Q_C$ is strongly connected. Then, each
 connected component of $Q_C$
must be $\mathbb{A}_n$ for some $n>0$. Then, by \cite[Theorem 2.5]{GTN}, $C$ is serial.
\end{proof}

Nevertheless, if we drop the hereditariness of the coalgebra, the above statement fails.

\begin{example}
Let $C$ be the subcoalgebra of the path coalgebra $kQ$,
where $Q$ is the quiver formed by a single vertex $x$
and two loops $a$ and $b$ starting and ending at $x$,
generated by $\{x,\{a^n\}_{n>0},\{b^m\}_{m>0}\}$.
By Remark \ref{counter}, $C$ is semiprime. Also, $C$ is a string coalgebra in the sense
of \cite{simson2}, and hence it is of $k$-tame comodule type. Nevertheless, it is clear that
$C$ is not serial, see \cite{GTN}.
\end{example}

The following result will be used in the proof of Theorem \ref{biserial}.
By a monomial coalgebra we mean
a subcoalgebra of a path coalgebra generated by paths as vector space.
We also remind from \cite{simson2} that an admissible subcoalgebra $C$ of a path
coalgebra $kQ$ is said to be string if satisfies the following properties:
\begin{enumerate}[$a)$]
\item each vertex of $Q$ is the source
of at most two arrows and the sink of at most two
arrows.
\item $C$ is a monomial coalgebra.
\item given an arrow $\beta:i\rightarrow j$ in $Q$, there is at most one arrow
$\alpha:j\rightarrow k$ in $Q$, and at most one arrow $\gamma:l\rightarrow i$
in $Q$ such that $\alpha\beta\in C$ and $\beta\gamma\in C$.
\end{enumerate}

\begin{lemma}\label{bicycle}
Let $C$ be a monomial admissible subcoalgebra of $kQ$, where $Q$ is the quiver formed by
one vertex $x$ and two loops $a$ and $b$.
 If $C$ is semiprime and $k$-tame, then  it is the coalgebra generated by one of the
following sets:
\begin{enumerate}[$a)$]
\item the paths $\{x, \{a^n\}_{n>0},\{b^m\}_{m>0}\}$, or
\item the paths
$\{x,\{b(ab)^t\}_{t\geq 0}, \{a(ba)^s\}_{s\geq 0}, \{(ab)^n\}_{n>0},\{(ba)^m\}_{m>0}\}$.
\end{enumerate}
As a consequence, $C$ is a string coalgebra.
\end{lemma}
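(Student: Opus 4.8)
The plan is to translate everything into combinatorics of words in the two loops $a,b$ and then to split the analysis according to which length-two paths survive in $C$. Since $Q$ has the single vertex $x$, a monomial admissible subcoalgebra $C\subseteq kQ$ is exactly the $k$-span of a set $W$ of words in $a,b$ which contains $x,a,b$ and is closed under taking consecutive subwords (from $\Delta(w)=\sum_{w=uv}u\otimes v$, every prefix and suffix of a word of $C$ lies in $C$, hence so does every factor). Under this dictionary a subcoalgebra $D$ corresponds to a subword-closed $V\subseteq W$, and $D\otimes C+C\otimes D$ is spanned by the basis tensors $u\otimes v$ with $u\in V$ or $v\in V$; therefore $C=D\wedge^C D$ if and only if for every $w\in W$ and every factorization $w=uv$ at least one of $u,v$ lies in $V$. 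Read contrapositively, $C$ fails to be semiprime exactly when there is a proper subword-closed $V\subsetneq W$ whose nonempty complement $U=W\setminus V$ is closed under passing to superwords inside $W$ and such that no word of $W$ splits into two members of $U$. In particular, if some $w_0\in W$ is maximal (not a proper subword of any word of $W$) then $V=W\setminus\{w_0\}$ witnesses non-semiprimeness; so semiprimeness forces every word of $C$ to extend to a longer word of $C$, and $C$ is infinite dimensional.

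Next I would pin down $L_2:=W\cap\{aa,ab,ba,bb\}$. If $|L_2|\le 1$ then one of $a,b$ cannot be extended on either side and is maximal, contradicting semiprimeness. If $L_2$ is a two-element set that is not a matching, say $\{aa,ab\}$, then the letter $b$ can only occur at the boundary of a word; the set $U$ of words ending in $b$ is then superword-closed and no word of $W$ splits into two such pieces, so $C$ is again not semiprime. Thus semiprimeness alone already leaves only the two matchings $\{aa,bb\}$ and $\{ab,ba\}$ together with the sets $L_2$ of size $\ge 3$. The latter I would discard using $k$-tameness: when at least three length-two paths survive at $x$, the finite-dimensional subcoalgebra of $C$ spanned by the words of length $\le 2$ has dual a local algebra with $\dim(\mathrm{rad}/\mathrm{rad}^2)=2$ and $\dim \mathrm{rad}^2\ge 3$, which is $k$-wild; since a coalgebra containing a $k$-wild subcoalgebra is $k$-wild and a $k$-wild coalgebra is not $k$-tame, this contradicts the tameness of $C$. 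Hence $L_2\in\{\{aa,bb\},\{ab,ba\}\}$.

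Finally I would reconstruct $W$ in each case from the ``no maximal word'' consequence of semiprimeness. If $L_2=\{aa,bb\}$, no word of $W$ contains $ab$ or $ba$ as a subword, so $W$ consists only of powers of $a$ and powers of $b$; the absence of maximal words forces both families to be unbounded, and subword-closedness then gives $W=\{x\}\cup\{a^n\}_{n>0}\cup\{b^m\}_{m>0}$, which is case $a)$. If $L_2=\{ab,ba\}$, every word of $W$ is alternating; again there is no maximal word, so $W$ contains arbitrarily long alternating words, and subword-closedness forces $W$ to be the full set of alternating words, which is exactly the list in case $b)$. In both cases the unique vertex is the source and sink of two arrows and the surviving length-two paths form a matching, so conditions $a)$--$c)$ of the definition of a string coalgebra hold and $C$ is string.

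The step I expect to be the main obstacle is the tameness argument for $|L_2|\ge 3$: one must genuinely exhibit a wild subcoalgebra rather than merely note that the string condition $c)$ fails, and here I would lean on the representation-type results for monomial and special biserial coalgebras in \cite{simson2}, \cite{simson08} and \cite{simson09} to certify the $k$-wildness of the relevant local subcoalgebra. The remaining ingredients are the bookkeeping of the wedge criterion of the first paragraph and the elementary verifications that cases $a)$ and $b)$ are indeed semiprime (by the middle-splitting argument of Remark \ref{counter}, together with subword-closedness), and these are routine.
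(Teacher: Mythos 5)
Your argument reaches the correct conclusion and its skeleton is sound, but it is organized differently from the paper's proof. The paper never introduces the set $L_2$ of surviving length-two words; it runs a case analysis on which of $a^2,ab,ba,b^2$ lie in $C$ and eliminates \emph{every} non-matching pair by wildness: for instance $a^2,ba\in C$ forces $C$ to contain the five-dimensional subcoalgebra spanned by $\{x,a,b,a^2,ba\}$, dual to $k\langle a,b\rangle$ with relations $a^3=b^2=ba^2=ab=0$, which is $k$-wild by \cite{ringel75}. The degenerate cases (a square present with both mixed words absent, or $a^2,ba\notin C$) are then excluded by explicit wedge decompositions, very much in the spirit of your subword-complement witnesses, and the two matchings are pinned down as in your last paragraph. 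Your treatment of the two-element non-matching sets by semiprimeness alone (the words ending, resp.\ beginning, in $b$ form a superword-closed set that never splits) is a genuine economy: it shows those configurations are impossible \emph{without} invoking tameness, so the representation-theoretic input is needed only when $|L_2|\geq 3$, where it is unavoidable, since the full path coalgebra $kQ$ is hereditary with strongly connected quiver, hence prime, so semiprimeness cannot exclude large $L_2$.

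Two caveats. First, your dictionary overstates itself: it is false that every subcoalgebra of a monomial coalgebra is spanned by words (for example, $\mathrm{span}(x,a,b,ab+ba)$ is a subcoalgebra of the coalgebra of case $b)$ that contains neither $ab$ nor $ba$), so your ``if and only if'' criterion for semiprimeness is valid only in the direction you actually use in the eliminations: a proper subword-closed $V$ with the splitting property certifies non-semiprimeness. This costs nothing in the proof of the implication stated in the lemma, but it means your closing remark that $a)$ and $b)$ are semiprime is verified only against monomial subcoalgebras; ruling out arbitrary subcoalgebras requires an argument like the one in Remark \ref{counter} (extract the middle term $a^n\otimes a^n$ of $\Delta(a^{2n})$, taking $n$ minimal with $a^n\notin D$), and in any case that converse is not part of the statement. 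Second, the wildness certificate you lean on for $|L_2|\geq 3$ (local dual algebra with $\dim\mathrm{rad}/\mathrm{rad}^2=2$, $\dim\mathrm{rad}^2\geq 3$, $\mathrm{rad}^3=0$) is true, but it is not to be found in the Simson references you cite; the clean way to certify it is the paper's own citation: any $L_2$ with at least three elements contains a non-matching pair, hence $C$ contains, up to swapping $a\leftrightarrow b$ or passing to the opposite coalgebra, the five-dimensional subcoalgebra above, which is wild by \cite{ringel75}, and a coalgebra containing a $k$-wild subcoalgebra is $k$-wild. With that substitution your argument is complete.
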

\begin{proof}
Since $C$ is $k$-tame, by the Weak Tame-Wild Dichotomy proved in \cite{simson1}, $C$ is not $k$-wild. Let us suppose that $a^2\in C$ and $ba\in C$.
 Then $C$ contains the subcoalgebra
generated by $\{x,a,b,a^2,ba\}$, which is dual
to the finite dimensional algebra $k\langle a,b\rangle$ with relations
$a^3=b^2=ba^2=ab=0$. By \cite{ringel75}, this algebra is $k$-wild and then $C$ is so.

Let us suppose that $a^2\in C$ and $ba\notin C$.
If $ab\in C$, we may repeat the above arguments and get a
contradiction.
Now, if the powers of $a$ are bounded, i.e., there exists certain $t\geq 2$
such that $a^t\in C$ and $a^{t+1}\notin C$, consider
$A=C\cap \langle \{a^sb^m \text{ such that } s<t, m\geq 0\}\rangle$. Clearly,
$A\wedge^C A=C$ and $A\neq C$, since $a^t\notin A$. Then, $C$ is not semiprime. Similarly, the powers of $b$ are not
bounded. Then $C$ is the coalgebra of $a)$.

Let us suppose that $a^2\notin C$ and $ba \in C$. If $ab\notin C$, then $C$ must
be generated by paths $b^ma$ for $m\geq 0$. Then, $B\wedge^C B=C$, where
$B=\langle \{b^m\}_{m\geq 0}\}$, and $C$ is not semiprime. Then $ab\in C$ and,
by similar arguments as above, $b^2\notin C$. Since $C$ has infinite coradical filtration,
 i.e., the length of the paths in $C$ is unbounded,
$C$ is the coalgebra of $b)$.

Finally, if $a^2\notin C$ and $ba \notin C$, the paths in $C$ has the form $ab^m$
 for $m\geq 0$ and therefore $C=B\wedge B$, where $B=\langle \{b^m\}_{m\geq 0}\}$,
 and it is not semiprime.

It is easy to see that the coalgebras of $a)$ and $b)$ are semiprime and string.
\end{proof}

\begin{theorem} \label{biserial}
Let $Q$ be a locally finite quiver and $C$ a monomial admissible subcoalgebra
of $kQ$. If $C$ is a semiprime fc-tame coalgebra,
then $C$ is a string coalgebra.
\end{theorem}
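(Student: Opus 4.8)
The plan is to verify the three defining properties of a string coalgebra one at a time, reducing everything to the single-vertex situation already settled in Lemma~\ref{bicycle}. Property $(b)$ is immediate, since $C$ is assumed monomial. Before treating $(a)$ and $(c)$ I would first pass to the connected case: $C$ decomposes along the connected components of $Q$, each component is again monomial, fc-tame, and semiprime (if $C=\oplus D_i$ is semiprime and some $D_{i_0}=A\wedge^{D_{i_0}}A$ properly, then $A\oplus\bigoplus_{i\neq i_0}D_i$ realises $C$ as a proper wedge square, a contradiction), and the string conditions are local; so it suffices to treat one component, which by Theorem~\ref{main}$(i)$ is strongly connected. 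Finally, because $Q$ is locally finite every simple $C$-comodule is finitely copresented, so $\M^C_f\subseteq\M^C_{fc}$; hence fc-tameness upgrades to $k$-tameness and, dually, no finite-dimensional $k$-wild subcoalgebra can occur. In particular the prototypes $K_3,K_5,L_3$ are forbidden.

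For property $(a)$ I would localize at a single vertex $v$, forming $D_v=e_vCe_v$. This coalgebra has exactly one simple comodule, hence is socle-finite, and by Lemma~\ref{fc-tame} it is fc-tame. Its Gabriel quiver is a bouquet of loops, the loops being the cells of $Q$ relative to $\{v\}$; by strong connectivity each arrow out of $v$ is the initial arrow, and each arrow into $v$ the terminal arrow, of at least one cell, and cells with distinct initial (resp.\ terminal) arrows are distinct. Thus if $v$ were the source, or the sink, of three or more arrows, then $D_v$ would carry at least three loops and would contain the radical-square-zero subcoalgebra $k(L_3)_1$ spanned by $v$ and three of these loops. That subcoalgebra is finite-dimensional and $L_3$-wild, so $D_v$ is not fc-tame, and through Lemma~\ref{fc-tame} neither is $C$ --- a contradiction. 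Hence every vertex is the source of at most two and the sink of at most two arrows, which is exactly $(a)$.

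For property $(c)$ I would again work inside $D_v=e_vCe_v$. By $(a)$ this localized coalgebra is monomial with at most two loops; granting that the single-vertex localization of a semiprime coalgebra is again semiprime, Lemma~\ref{bicycle} classifies $D_v$ and shows it is string, i.e.\ each loop of $D_v$ admits at most one composable successor and one composable predecessor inside $D_v$. Translating composability of cells at $v$ back into composability of the arrows of $Q$ at $v$ then yields precisely that, for an arrow $\beta$ ending at $v$, at most one arrow $\alpha$ starting at $v$ satisfies $\alpha\beta\in C$, and dually for predecessors; this is $(c)$. Together with $(a)$ and $(b)$ this shows that $C$ is string.

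The step I expect to be the main obstacle is the localization bookkeeping in the last two paragraphs. First, one must check that the single-vertex localization $e_vCe_v$ inherits semiprimeness: Theorem~\ref{semiprime} only delivers this directly for idempotents that are a sum of two primitive orthogonals, so a separate argument --- or a reduction to a suitable two-vertex localization --- is required. Second, one must make the dictionary between composition of cells in $D_v$ and composition of arrows of $Q$ at $v$ precise enough that the bicycle classification of Lemma~\ref{bicycle} transfers cleanly to condition $(c)$. A subsidiary technical point is that $D_v$ may fail to be locally finite (the number of cells relative to $\{v\}$ can be infinite), so the passage from a forbidden local configuration to a genuine contradiction with fc-tameness must be routed through Lemma~\ref{fc-tame} together with the fact that the obstructing piece $k(L_3)_1$ is \emph{finite dimensional}, where fc-wildness, $k$-wildness and wildness coincide, rather than through $D_v$ as a whole.
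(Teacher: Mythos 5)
Your opening moves match the paper: local finiteness places $\M^C_{f}$ inside $\M^C_{fc}$, so fc-tameness rules out $k$-wildness via the weak tame--wild dichotomy, and the reduction to connected (hence, by Theorem~\ref{main}, strongly connected) components is sound. But the core of your argument rests on a false identification: you treat the loops of the Gabriel quiver of $D_v=e_vCe_v$ as if they were the cells of $Q$ relative to $\{v\}$. That identification is correct for the full path coalgebra, where $e_v(kQ)e_v\cong kQ^{e_v}$, but for a proper monomial admissible subcoalgebra $C\subsetneq kQ$ the loops of $Q_{D_v}$ correspond to the cells relative to $\{v\}$ that \emph{lie in} $C$: a cell $p$ contributes to $(S_v\wedge^{D_v}S_v)/S_v$ only if $p\in C$. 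Strong connectivity of $Q$ produces cells in $kQ$, never membership in $C$; admissibility puts all vertices and arrows in $C$, but no particular longer path. Hence three arrows leaving $v$ need not yield three loops in $D_v$, and your contradiction via $k(L_3)_1$ does not materialize. The paper's Lemma~\ref{paths} (semiprimeness forces every path of $C$ to extend by arrows on both sides) is the tool invented to address exactly this point, and even that lemma does not force the extended paths to return to $v$; this is why the paper, in the case of three arrows with three distinct targets, localizes at all three or four relevant vertices and runs the case analysis $\Gamma_1,\Gamma_2,\Lambda_1,\dots,\Lambda_7$ rather than working in $e_vCe_v$. The same confusion breaks your step for property $(c)$ in the opposite direction: property $(a)$ bounds the arrows of $Q$ at $v$, not the cells lying in $C$, so $D_v$ may have many more than two loops (a single arrow out of $v$ can begin infinitely many cells belonging to $C$), and Lemma~\ref{bicycle} simply does not apply to it.

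There is a second, independent obstruction in your treatment of $(c)$: for $\beta:i\to v$ and $\alpha:v\to k$ the path $\alpha\beta$ starts at $i$ and ends at $k$, so unless $i=k=v$ it is not an element of $e_vCe_v$ at all; the coalgebra $D_v$ is blind to whether $\alpha\beta\in C$. Composability of two cells of $D_v$ does imply, by closure of the monomial basis under subpaths, that the corresponding pair of arrows at $v$ composes inside $C$, but the converse implication --- the one your ``translation'' needs --- fails, so stringness of $D_v$ cannot be converted into condition $(c)$ for $C$. This is precisely where the paper's real work lies: Lemma~\ref{paths}, localizations at two-, three- and four-element sets of vertices (whose semiprimeness, as you correctly flag, already needs more than the literal statement of Theorem~\ref{semiprime}), and a long case analysis using Han's tables of wild two-point algebras and the quivers $\widetilde{B}_2$, $P_1$, $P_2$. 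Your proposal contains no substitute for that analysis. The only parts of your strategy that can be salvaged as written are the configurations where the three arrows out of $v$ end at one or two vertices, since there the paper exhibits a finite-dimensional wild subcoalgebra spanned by vertices and arrows alone, which admissibility genuinely places inside $C$.
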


\begin{proof}
Since $Q$ is locally finite,
the number of arrows ending at any vertex of $Q$ is finite, so each simple $C$-comodule
is finitely copresented. Then
$\M^C_{f}$ is contained in $\M^C_{fc}$ and,
by the Weak Tame-Wild Dichotomy proved in \cite{simson1}, $C$ is not $k$-wild.

Let us prove that in the quiver $Q$ the vertices are the source of at most two arrows.
Suppose, contrary to this, that $x\in Q_0$ is the source of three arrows, and we prove that either
$C$ is $k$-wild, or a ``localized'' coalgebra of $C$ is $k$-wild. If the arrows end
at the same vertex, $Q$ contains one of the following subquivers $\Gamma$:
$$\begin{array}[r]{cc}
\xymatrix{
& \\ \circ \ar@<0.6ex>[r]
\ar@<-0.4ex>[r] \ar@<0.1ex>[r] & \circ  } &
\qquad \qquad \xymatrix{ \\ \circ \ar@(ul,dl)[] \ar@(ul,ur)[]  \ar@(ur,dr)[]  } \\
 & \\
K_3 & \qquad \qquad L_3 \end{array}$$
In both cases $(k\Gamma)_1\subset C$ is $k$-wild, see \cite{simson2}.
If the arrows end at two different vertices, $Q$ contains
one of the following subquivers:
$$\begin{array}{ccc}
\xymatrix@R=10pt{
& \\ \circ \ar@<0.4ex>[r]
\ar@<-0.2ex>[r] \ar[rd] & \circ  \\
& \circ}
&
\qquad \qquad
\xymatrix{ & \\ \circ \ar@(ul,ur)[] \ar@(dl,dr)[]  \ar[r] & \circ  }
&
\qquad \qquad \xymatrix{ & \\ \circ \ar@(ul,ur)[] \ar@<0.4ex>[r]
\ar@<-0.2ex>[r] & \circ  }
\\ & &\\
\widetilde{B}_2 & \qquad \qquad P_1 & \qquad \qquad P_2 \end{array}$$
$K\widetilde{B}_2\subset C$ is a hereditary coalgebra whose Gabriel quiver is not Dynkin
diagram and then $k$-wild \cite{simson1}. For the other two cases, $(kP_i)_1\subset C$ is
a coradical square complete coalgebra whose separate quiver is not a Dynkin
or Euclidean diagram,
then, by \cite[Theorem 6.5]{justus2}, it is $k$-wild.

Therefore, we may assume that the three arrows end at three different vertices, i.e,
the subquiver has one of the following shapes:
$$\begin{array}{cc}
\xymatrix@R=10pt{  & \circ \\ \circ \ar[r]\ar[ru] \ar[rd] & \circ \\ & \circ}
&
\qquad \qquad
\xymatrix@R=10pt{ & \circ \\ \circ \ar@(ul,dl)[] \ar[rd] \ar[ru]& \\ & \circ }
\\
\Gamma & \qquad \qquad \Lambda \end{array}$$
Let us consider the ``localized'' coalgebra $D=eCe$,
where $e$ correspond to the vertices of the above subquivers. Then $D$ is
also generated by paths, semiprime (Theorem \ref{semiprime})
 and fc-tame
\cite[Proposition 5.1]{simson09}. Furthermore, it contains
$\Gamma$ or $\Lambda$ as subquiver, see \cite{jmn2}. Now, by Theorem \ref{main},
 the Gabriel quiver of $D$ is strongly connected. Furthermore, if we make use of
\cite[Theorem 6.5]{justus2} for analyzing the comodule type of the first piece $D_1$ of the
coradical filtration, the only possible quivers for $D$ are the following:
$$\text{Case $\Gamma$:}\begin{array}{ccc}
 &
 \xymatrix@R=10pt{
& &  \circ \ar@<0.3ex>[ld]\\
\Gamma_1& \quad\circ \ar@(ul,dl)[] \ar@<0.3ex>[ru]  \ar@<0.4ex>[rd]&  \\
& & \circ  \ar@<0.4ex>[lu]}
&
 \xymatrix@R=10pt{
& &  \circ \ar@<0.3ex>[ld]\\
\Gamma_2& \circ \ar@(ul,dl)[] \ar@<0.3ex>[ru]  \ar@<0.4ex>[rd]&  \\
&  &\circ  \ar[uu]} \end{array}$$

$$\text{Case $\Lambda$:}\begin{array}{ccc}
 \xymatrix@R=10pt{
& &  \circ \ar@<0.3ex>[ld] \\
&\Lambda_1 \quad\circ  \ar@<0.3ex>[ru] \ar@<0.3ex>[rd] \ar@<0.3ex>[r]& \circ \ar@<0.3ex>[l]\\
& & \circ  \ar@<0.3ex>[lu] }
&
 \xymatrix@R=10pt{
& &  \circ \ar@<0.3ex>[ld]\ar@(ur,dr)[]\\
&\Lambda_2 \quad \circ  \ar@<0.3ex>[ru] \ar@<0.3ex>[rd] \ar@<0.3ex>[r]& \circ \ar@<0.3ex>[l]\\
& & \circ  \ar@<0.3ex>[lu] }
&
 \xymatrix@R=10pt{
& &  \circ \ar@<0.3ex>[ld] \\
&\Lambda_3 \quad\circ  \ar@<0.3ex>[ru] \ar@<0.3ex>[rd] \ar@<0.3ex>[r]& \circ
\ar[u] \ar@<0.3ex>[l]\\
& & \circ  \ar@<0.3ex>[lu] }
\\
\xymatrix@R=10pt{
& &  \circ \ar@<0.3ex>[ld]\\
 &\Lambda_4 \quad \circ  \ar@<0.3ex>[ru] \ar@<0.3ex>[rd] \ar@<0.3ex>[r]& \circ \ar[u]\\
& & \circ  \ar@<0.3ex>[lu] \ar@(ur,dr)[]}
&
 \xymatrix@R=10pt{
& &  \circ \ar@<0.3ex>[ld]\\
&\Lambda_5 \quad\circ  \ar@<0.3ex>[ru] \ar@<0.3ex>[rd] \ar@<0.3ex>[r]& \circ \ar[u]\\
& & \circ  \ar[u] }
&
 \xymatrix@R=10pt{
& &  \circ \ar@<0.3ex>[ld]\\
&\Lambda_6 \quad\circ  \ar@<0.3ex>[ru] \ar@<0.3ex>[rd] \ar@<0.3ex>[r]& \circ \ar[u]\\
& & \circ  \ar@/_10pt/[uu] }
\\
&  \xymatrix@R=10pt{
& &  \circ \ar@<0.3ex>[ld]\\
 &\Lambda_7 \quad \circ  \ar@<0.3ex>[ru] \ar@<0.3ex>[rd] \ar@<0.3ex>[r]& \circ \ar[u]\\
& & \circ  \ar@<0.3ex>[lu] }  & \end{array}$$
We shall need the following lemma.
\begin{lemma}\label{paths} If there exists an arrow $\alpha$ such that there is no arrow $\beta$
with $\beta\alpha\in C$ ($\alpha\beta\in C$), then $C$ is not semiprime. More generally,
if there is a path $p$ such that $p\beta\notin C$ ($\beta p\notin C$) for any arrow $\beta$,
then $C$ is not semiprime.
\end{lemma}
\begin{proof}
If there is no arrow $\beta$ such that $\beta\alpha\in C$, then, for each path $p\in C$,
or $\alpha\nleq p$, or $p=\alpha q$ with $\alpha \nleq q$. We may consider the subcoalgebra
$$A=C\cap \langle \{\text{paths $p$ in $C$ such that $\alpha\nleq p$}\}\rangle.$$
Since $\alpha\notin A$, $A\neq C$. Nevertheless, $A\wedge^C A=C$. Indeed, for each path $p\in C$,
if $\alpha\nleq p$ then $p\in A$, and, if $p=\alpha q$ with $\alpha\nleq q$, clearly
$\Delta(p)\subseteq A\otimes C+C\otimes A$. Thus $C$ is not semiprime. The proof of the general
case is similar.
\end{proof}
Let us consider that the Gabriel quiver of $D$ is $\Gamma_1$
$$\xymatrix@R=10pt{
&  \point{2} \ar@<0.3ex>[ld]^-{\delta}\\
 \point{1} \ar@(ul,dl)[]_-{\alpha} \ar@<0.3ex>[ru]^-{\beta}  \ar@<0.4ex>[rd]^-{\gamma}
&  \\
 & \point{3}  \ar@<0.4ex>[lu]^-{\epsilon}}$$
By Lemma \ref{paths}, $\delta\beta\in D$. Then
$\delta\beta$ is a cell relative to $\{e_1,e_3\}$ in the sense of \cite{jmn2}. Hence, if
$e=e_1+e_3$, by \cite{jmnr}, the Gabriel quiver of $eDe$ contains $P_1$ as a subquiver, and
then $eDe$ is $k$-wild and not $k$-tame, contradicting Lemma \ref{fc-tame}.
 By a similar reasoning,
me may discard all the other possibilities.
Then each vertex is the source of at most two arrows.
Similarly, it is the sink of at most two arrows.

In the following we will assume that all the quivers involved contain, at most, two arrows
starting/ending from/at any vertex.
Let us now prove that, if $a$ and $b$ are two arrows starting from certain vertex,
 and $c$ is an arrow
ending at this vertex, then $ac\notin C$ or $bc\notin C$. Actually, following Lemma \ref{paths},
 it is proven that $ac\in C$ or $bc\in C$, but not both. In the following list, we describe all
possible cases contradicting our thesis and prove that,
in these cases, the coalgebra, or a localization of it, must be $k$-wild.
By the localization techniques, the reader may assume that
the whole quiver
is reduced to a quiver whose vertices are those showed in each case.
\begin{enumerate}[$a)$]
\item Case $Q=L_2$ and $c=a$ or $c=b$. This is proven
in Lemma \ref{bicycle}.
\item Case $\xymatrix{\circ \ar[r]^-{c} & \circ
\ar@<0.3ex>[r]^-{a} \ar@<-0.3ex>[r]_-{b} & \circ}$ with $ac,bc\in C$. Then $C$ contains
a finite dimensional hereditary coalgebra whose Gabriel quiver is not a Dynkin or Euclidean graph. Then $C$ is $k$-wild.
\item Case $\xymatrix{\circ
\ar@<0.6ex>[r]^-{a,b}  \ar[r] & \circ \ar@<0.6ex>[l]^-{c} }$ with $ac, bc\in C$.
 Then
$C$ contains the dual coalgebra of the $k$-wild radical cube zero algebra of
\cite[Table W(2)]{han}.
\item Case $\begin{array}[r]{c}\xymatrix{\point{1} \ar@(ul,dl)[]_-{a}
\ar[r]^-{b}  \ar[r] & \point{2}\ar@<0.3ex>@{}[l]^-{\phantom{c}} }\end{array}$
with $a^2\in C$ and $ba\in C$. If the powers of $a$ in $C$
are not bounded, then $C$ contains the dual coalgebra of
 the algebra \cite[Table W(6)]{han}.
Therefore $a^t\notin C$ for some $2<t<7$. Now, since the ``localized'' coalgebra $e_1Ce_1$ is
not simple (and then infinite dimensional) semiprime and fc-tame, its Gabriel quiver should
be $L_2$ with two loops named $a$ and $\overline{b}$, where $a^2\in e_1Ce_1$
but $a^7\notin e_1Ce_1$.
By Lemma \ref{bicycle}, this is not possible.
\item Case $\begin{array}[r]{c}\xymatrix{\point{1} \ar@(ul,dl)[]_-{a}
\ar@<0.3ex>[r]^-{b}   & \point{2}\ar@<0.3ex>[l]^-{c} }\end{array}$ with $ac,bc\in C$.
By Lemma \ref{paths}, $bac\in C$ or $a^2c\in C$. If $a^2c\in C$, we are in the dual of
 Case $d)$.
Therefore $bac\in C$ and then $ba\in C$. Hence $a^2\notin C$, since,
otherwise, we are in Case $d)$.
Also, there is no arrow starting at $\isolated{2}$. In such a case,
since $bac$ and $ac$ are cells,
$e_2Ce_2$ has Gabriel quiver $L_3$ or $P_1$. Then $C$ contains
the subcoalgebra generated by $\{1,2,a,b,c,ba,cb,bc,cba,cbc\}$ dual to
the $k$-wild algebra of \cite[Table W(21)]{han}.

\item Case $\begin{array}[r]{c}\xymatrix{\point{1} \ar[r]^-{c} &
\point{2} \ar@(ul,ur)[]^-{a}
\ar[r]^-{b}   & \point{3}}\end{array}$ with $ac,bc\in C$. Let us suppose that $a^nc\in C$
for any $n>0$, then $C$ is $k$-wild since it contains the hereditary coalgebra
of the quiver formed by $c$ and the loop $a$. Hence there exists certain $t>0$
such that $a^tc\notin C$ and $a^{t-1}c\in C$. By Lemma \ref{paths}, $ba^{t-1}c\in C$.
If $t>1$, $ba^2c\in C$ and $ba^2\in C$ so we are in Case $d)$. Then $t=1$, and $bc, bac\in C$.
Now, if there exists another arrow starting at $\isolated{1}$,
which cannot end at $\isolated{2}$, there are three arrows starting at $\isolated{1}$ in
$eCe$ with $e=e_1+e_3$. Then, there exists an arrow $d:\isolated{3}\rightarrow\isolated{1}$.
Observe that, if there exists another arrow $h:\isolated{3}\rightarrow\isolated{1}$, by Lemma
\ref{paths}, $ch,cd\in C$ and we may reduce it to Case $b)$. Furthermore, if there exists a
loop at $\isolated{3}$, the vertex $\isolated{3}$ receives three arrows in $eCe$ with $e=e_1+e_3$.
Therefore, the quiver $Q_C$ is the following
$\begin{array}[r]{c}\xymatrix{\point{1} \ar[r]^-{c} &
\point{2} \ar@(ul,ur)[]^-{a}
\ar[r]^-{b}   & \point{3} \ar@/^10pt/[ll]^-{d}}\end{array}$. Furthermore, by  Lemma \ref{paths}, $cdb\in C$. Then, if $e=e_1+e_2$, $eCe$ is an admissible coalgebra of $\begin{array}[r]{c}\xymatrix{\point{1} \ar@<0.3ex>[r]^-{c} &
\point{2} \ar@(ul,ur)[]^-{a}
\ar@<0.3ex>[l]^-{m}  }\end{array}$
where $m$ is an arrow obtained from the cell $db$, and $mc,ac\in C$. Thus, by Case $e)$, $eCe$ is $k$-wild.

\item Case $\begin{array}[r]{c}\xymatrix@R=10pt@C=30pt{
\point{1}\ar[rd]_-{b} \ar@<0.3ex>[r]^-{a} &
 \point{2} \ar@<0.3ex>[l]^(0.4){c}\\
  &    \point{3} }\end{array}$ with $ac,bc\in C$. Let us determine the shape of the quiver.
Suppose that there is no arrow from $\isolated{3}$ to $\isolated{1}$.
 Since $Q$ is strongly connected, there exists an arrow $d$ from
$\isolated{3}$ to $\isolated{2}$. Moreover, any other arrow
starting from $\isolated{3}$ should
be a loop $\alpha$. If such a loop exists, by Lemma \ref{paths},
either $i)$ there exists a path
 $d\alpha^nbc\in C$ for some $n>0$, or  $ii)$
$\alpha^mb\in C$ and $d\alpha^m\in C$ for any $m>0$.
If $i)$, $eCe$ with $e=e_1+e_2$
belongs to Case $c)$, since $d\alpha^nb$ is a cell.
If $ii)$, as proved in Case $d)$, the coalgebra
is $k$-wild. If there is no loop $\alpha$, the proof is as in $i)$.
 Therefore, $Q$ contains the subquiver
$\begin{array}[r]{c}\xymatrix@R=2pt@C=15pt{
\point{2} \ar@<0.3ex>[r]^-{c}&
\point{1} \ar@<0.3ex>[r]^-{b} \ar@<0.3ex>[l]^-{a}&
 \point{3}\ar@<0.3ex>[l]^-{h}  }\end{array}$. Now, since $ac\in C$,
 a loop at $\isolated{2}$ provokes that
$eCe$ is $k$-wild because of its Gabriel quiver contains $P_1$, where $e=e_1+e_3$.
Then $Q$ can change from the above quiver uniquely if there is a loop at $\isolated{3}$.
 Let us suppose that there is no
such a loop, and the other case may be proven similarly.
 We have $ah\in C$. Indeed, if $ah\notin C$,
there is no path from $\isolated{3}$ to $\isolated{2}$.
 Hence, $Q_{eCe}$ is not strongly connected
with $e=e_2+e_3$.
Now, by Lemma \ref{paths}, $ahb\in C$ and also $ac\in C$.
 Then $Q_{eCe}$ is given  by
$\begin{array}[r]{c}\xymatrix{\point{1} \ar@(ul,dl)[]_-{\alpha}
\ar@<0.3ex>[r]^-{a}   & \point{2}\ar@<0.3ex>[l]^-{c} }\end{array}$
where $e=e_1+e_2$ and $\alpha$ corresponds to the cell $hb$. Hence $\alpha c\in eCe$ and
$ac\in eCe$. By Case $e)$, $eCe$ is $k$-wild.
\item Case $\begin{array}[r]{c}\xymatrix@R=2pt@C=15pt{ &  & \point{3} \\
 \point{1} \ar[r]^-{c}& \point{2} \ar[ru]^-{a} \ar[rd]_-{b}  &   \\
  &  &   \point{4} }\end{array}$ with $ac,bc\in C$.
Let us reduce the possibilities of the graph. Since $Q$ is strongly connected and
 $\xymatrix{\point{2}}$ is the source of at most two arrows,
there exists an arrow $d$ from $\xymatrix{\point{3}}$ (or $\xymatrix{\point{4}}$)
  to $\xymatrix{\point{1}}$.
$$\xymatrix@R=6pt@C=15pt{ &  & \point{3} \ar@/_5pt/[lld]_-{d}\\
 \point{1} \ar[r]_-{c}& \point{2} \ar[ru]_-{a} \ar[rd]_-{b}  &   \\
  &  &   \point{4} }$$
There is no another arrow than $c$ starting at $\isolated{1}$. Indeed, if there is a loop
in $\isolated{1}$, this vertex is the source of three arrows in $eCe$ with $e=e_1+e_3+e_4$, the loop itself and two arrows obtained from
the cells $ac$ and $bc$. Similarly, if there is an
 arrow from $\isolated{1}$ to $\isolated{3}$ or $\isolated{4}$, $Q_{eCe}$ contains
the $k$-wild path coalgebra of $\widetilde{B}_2$.
Finally, if there is an arrow $h$ from $\isolated{1}$ to $\isolated{2}$,
by Lemma \ref{paths}, $ah\in C$ or $bh\in C$. Then, it is dual to Case $b)$.
Also, if there is another arrow $h:\isolated{3}\rightarrow \isolated{1}$,
 by Lemma \ref{paths}, we are in Case $b)$.

Let us assume that there is no arrow from $\isolated{4}$ to $\isolated{1}$. Since $Q$
is strongly connected, there exists
an arrow $h:\isolated{4}\rightarrow \isolated{2}$
or $t:\isolated{4}\rightarrow \isolated{3}$. If there is no such an $h$, there is no another
arrow from $\isolated{4}$ to $\isolated{3}$ since $Q$ cannot contain
$(\widetilde{B}_2)^{op}$. By Lemma \ref{paths}, if there is a loop $\alpha$ at
$\isolated{4}$, either $i)$
$t\alpha^nbc\in C$ for some $n\geq 0$, or $ii)$ $t\alpha^m$ and
$\alpha^mb\in C$ for any $m>0$. If $i)$, $t\alpha^nb$ is a cell relative to
the localizing subcategory associated to $e=e_1+e_2+e_3$. Then $eCe$ contains
the $k$-wild path coalgebra of the quiver of Case $b)$. If $ii)$, as proved in Case
$d)$, $C$ is $k$-wild. If there is not such a loop, the proof is similar to $ii)$.
If there is no such an arrow $t$, we may discuss similarly and, $i)$
may be reduced to Case $f)$ and if $ii)$, the coalgebra is $k$-wild.
In case that there exist both, by Lemma \ref{paths}, $tbc\in C$ or $hbc\in C$ and then
$tb$ or $hb$ becomes cells for $e=e_1+e_2+e_3$.
So $C$ becomes $k$-wild applying the above arguments.
Then $Q$ has a (unique) arrow $m:\isolated{4}\rightarrow \isolated{1}$, and $Q$ contains
$$\xymatrix@R=6pt@C=15pt{ &  & \point{3} \ar@/_5pt/[lld]_-{d}\\
 \point{1} \ar[r]_(0.6){c}& \point{2} \ar[ru]_-{a} \ar[rd]_(0.4){b}  &   \\
  &  &   \point{4} \ar@/^8pt/[llu]^-{m} }$$
 Applying the same arguments, it is not difficult to see that $mbc, dac\in C$. Then, in the
most favorable case, for $e=e_1+e_2$, $eCe$ is reduced to Case $c)$.
\end{enumerate}
Duality, we may prove that if $a$ and $b$ are two arrows ending at certain vertex,
 and $c$ is an arrow
starting from this vertex, then $ca\notin C$ or $cb\notin C$. This completes the proof.
\end{proof}


\begin{thebibliography}{99}

\bibitem{Abe} E. Abe, Hopf Algebras, Cambridge University Press, 1977.

\bibitem{abu} J. Y. Abuhlail, Fully coprime comodules and
 fully coprime corings, Appl. Categ. Structures 14 (2006), no. 5-6, 379--409.

\bibitem{abu2} J. Y. Abuhlail,
A Zariski topology for bicomodules and corings,
Appl. Categ. Structures 16 (2008), no. 1-2, 13--28.

\bibitem{brustlehan} Th. Br\"{u}stle and Y. Han, Two-point algebras without loops,
Comm. Algebra (10) 29 (2001), 4683–-4692.

\bibitem{chin} W. Chin and S. Montgomery,  Basic coalgebras, AMS/IP Studies in
Advanced Mathematics 1997, 4, 41--47.

\bibitem{cuadra2} J. Cuadra, Extensions of rational modules,
Int. J. Math. Math. Sci. 69 (2003), 4363--4371.

\bibitem{cuadra} J. Cuadra and J. G{\'o}mez-Torrecillas,
Idempotents and Morita-Takeuchi theory, Comm. Algebra 30 (2002),
2405--2426.



\bibitem{eisenbud} D. Eisenbud and P. Griffith, Serial rings, J.
Algebra 17 (1971), 389--400.

\bibitem{ferrero} M. Ferrero and V. Rodrigues, On prime and semiprime modules and comodules, J. Algebra Appl. 5(5) 2006, 681--694.

\bibitem{gabriel} P. Gabriel, Des categories abeliennes, Bull. Soc. Math. France 90 (1962),
323--448.

\bibitem{han01} Y. Han, Controlled wild algebras, Proc. London Math. Soc. 83 (2) (2001),
279--298.

\bibitem{han} Y. Han, Wild two-point algebras, J. Algebra 247 (2002), 57--77.

\bibitem{nttstrictly} J. G\'{o}mez-Torrecillas, C. N\u ast\u asescu and B. Torrecillas,
Localization in coalgebras. Applications to finiteness conditions,
J. Algebra Appl. 6 (2007), 233-243.

\bibitem{GTN} J. G\'{o}mez-Torrecillas and G. Navarro,
Serial coalgebras and its valued Gabriel quivers, J. Algebra 319 (2008), 5039--5059.

\bibitem{formallysmooth} P. Jara, D. LLena, L. Merino and D. \c{S}tefan,
Hereditary and formally smooth coalgebras,  Algebr. Represent. Theory  8  (2005),  no. 3, 363--374.

\bibitem{jmn2} P. Jara, L. M. Merino and G. Navarro, Localization
in tame and wild coalgebras, J. Pure Appl. Algebra 211 (2007), 342--359.

\bibitem{jmnr2} P. Jara, L. Merino, G. Navarro and J. F. Ru\'{\i}z,
Localization in coalgebras, stable localizations and path
coalgebras,  Comm. Algebra  34 (2006), 2843--2856.

\bibitem{jmnr} P. Jara, L. Merino, G. Navarro and J. F. Ru\'{\i}z,
Prime path coalgebras, Arab. J. Sci. Eng. 33 (2008), Number 2C, 273-283.

\bibitem{justus} J. Kosakowska and D. Simson, Hereditary coalgebras and
representations of species, J. Algebra 293 (2005), 457--505.


\bibitem{justus2} J. Kosakowska and D. Simson, Bipartite
coalgebras and a reduction functor for coradical square complete
coalgebras, Coll. Math. 112 (2008), 89-–129.

\bibitem{lin} I-Peng Lin, B. Morita's theorem for coalgebras,
Comm. Algebra 1 (1974), no. 4, 311--344.

\bibitem{montgomery} S. Montgomery, Hopf Algebras and Their
Actions on Rings, MBS, No. 82, AMS, 1993.


\bibitem{montgomery2} S. Montgomery, Indecomposable coalgebras, simple comodules,
and pointed Hopf algebras,  Proc. Amer. Math. Soc. 123 (1995),
2343--2351.



\bibitem{navarro} G. Navarro, Some remarks on localization in coalgebras,
 Comm. Algebra 36 (2008), 3447--3466.

\bibitem{nek} R. Nekooei and L. Torkzadeh,
Topology on coalgebras, Bull. Iran. Math. Soc. 27(2) 2001, 45--63.

 \bibitem{radford} D. E. Radford,
 On the structure of pointed coalgebras, J. Algebra 77 (1982), 1-14.

\bibitem{ringel75} C. M. Ringel, The representation type of local algebras,
Proceedings of the International Conference on Representations of Algebras
, Carleton Math. Lecture Notes, No. 9, Carleton Univ.,
Ottawa, Ont., 1974.

\bibitem{simsonbluebook} D. Simson, Linear Representations
of Partially Ordered Sets and Vector Space Categories, Algebra Logic
Appl. 4, Gordon \& Breach, 1992.


\bibitem{simson1} D. Simson, Coalgebras, comodules,
pseudocompact algebras and tame comodule type, Colloq. Math. 90
(2001), 101-150.

\bibitem{simson2} D. Simson, Path coalgebras of quivers with
relations and a tame-wild dichotomy problem for coalgebras, Lectures
Notes in Pure and Applied Mathematics 236 (2005), 465-492.


\bibitem{simson06} D. Simson, Irreducible morphisms, the Gabriel-valued quiver and
colocalizations for coalgebras, Intern. J. Math. Sci. 72 (2006),
1--16.

\bibitem{simson07b} D. Simson, Hom-computable coalgebras, a composition factors matrix and the Euler
bilinear form of an Euler coalgebra, J. Algebra 315 (2007), 42--75.

\bibitem{simson07} D. Simson, Localising embeddings of comodule categories with applications
to tame and Euler coalgebras, J. Algebra 312 (2007) 455–-494.

\bibitem{simson08} D. Simson, Tame-wild dichotomy for coalgebras, J. London Math. Soc. (2) 78 (2008) 783–-797.

\bibitem{simson09} D. Simson, Tame comodule type, Roiter bocses, and a geometry context for coalgebras, Ukrainian Mathematical Journal 61 (2009), no. 6, 810--833.


\bibitem{simsonskowron2} D. Simson and A. Skowron'ski,
 Elements of the Representation Theory of Associative Algebras, Vol. 2:
 Tubes and concealed algebras of Euclidean type,
London Mathematical Society Student Texts 71, Cambridge University Press, Cambridge, 2007.

\bibitem{simsonskowron3} D. Simson and A. Skowron'ski,
 Elements of the Representation Theory of Associative Algebras, Vol. 3:
 Representation-infinite Tilted Algebras,
London Mathematical Society Student Texts 72, Cambridge University Press, Cambridge, 2007.

\bibitem{sweedler} M. E. Sweedler, Hoft Algebras, Benjamin,
New york, 1969.

\bibitem{takeuchithesis} M. Takeuchi, Tangent coalgebras and hyperalgebras.
 I, Japan. J. Math. 42 (1974), 1–-143.

 \bibitem{takeuchi} M. Takeuchi, Morita theorems for categories of
comodules, J. Fac. Sci. Uni. Tokyo 24 (1977), 629--644.

\bibitem{WijayantiWisbauer} I. A. Wijayanti and R. Wisbauer,
On coprime modules and comodules, Comm. Algebra 37 (2009), no. 4, 1308--1333.

\bibitem{woodcock}
 D. Woodcock,
 Some categorical remarks on the representation theory of coalgebras,
 Comm. Algebra 25 (1997), 2775--2794.

\end{thebibliography}
\end{document}